\title{ADMM for Nonsmooth Composite Optimization under Orthogonality Constraints}
\author{Ganzhao Yuan\\
Peng Cheng Laboratory, China\\
\texttt{yuangzh@pcl.ac.cn}}
\newtheorem{theorem}{Theorem}[section]
\newtheorem{proposition}[theorem]{Proposition}
\newtheorem{lemma}[theorem]{Lemma}
\newtheorem{definition}[theorem]{Definition}
\newtheorem{assumption}[theorem]{Assumption}
\newtheorem{remark}[theorem]{Remark}
\def\a{\mathbf{a}}\def\b{\mathbf{b}}\def\d{\mathbf{d}}
\def\p{\mathbf{p}}\def\s{\mathbf{s}}
\def\v{\mathbf{v}}\def\w{\mathbf{w}}\def\x{\mathbf{x}}\def\y{\mathbf{y}}\def\z{\mathbf{z}}
\def\B{\mathbf{B}}\def\D{\mathbf{D}}\def\G{\mathbf{G}}\def\I{\mathbf{I}}\def\Q{\mathbf{Q}}\def\T{\mathbf{T}}\def\U{\mathbf{U}}\def\V{\mathbf{V}}\def\X{\mathbf{X}}\def\Y{\mathbf{Y}}\def\Z{\mathbf{Z}}\def\v{\mathbf{v}}\def\p{\mathbf{p}}
\def\AA{\mathcal{A}}\def\BB{\mathcal{B}}\def\MM{\mathcal{M}}\def\OO{\mathcal{O}}\def\SS{\mathcal{S}}\def\XX{\mathcal{X}}\def\YY{\mathcal{Y}}\def\ZZ{\mathcal{Z}}
\def\DDD{\mathbb{D}}\def\GGG{\mathbb{G}}\def\PPP{\mathbb{P}}\def\TTT{\mathbb{T}}
\DeclareMathOperator{\tr}{tr}
\DeclareMathOperator{\qf}{qf}
\DeclareMathOperator{\dom}{dom}
\DeclareMathOperator{\Crit}{Crit}
\DeclareMathOperator{\dist}{dist}
\DeclareMathOperator{\Proj}{Proj}
\DeclareMathOperator{\Retr}{Retr}
\DeclareMathOperator{\prox}{\PPP}
\def\trans{^\mathsf{T}} \def\one{\mathbf{1}} \def\zero{\mathbf{0}}  \def\fro{\mathsf{F}}\def\Rn{\mathbb{R}}\def\ts{\textstyle}\def\diag{{\rm{diag}}}\def\Diag{{\rm{Diag}}}
\newcommand{\la}{\langle} \newcommand{\ra}{\rangle}\newcommand{\nn}{\nonumber}\newcommand\noi{\noindent}
\newcommand{\bfit}[1]{\textit{\textbf{#1}}}
\newcommand{\beq}{\begin{eqnarray}}
\newcommand{\eeq}{\end{eqnarray}}
\newcommand{\step}[1]{\text{\ding{\numexpr#1+171\relax}}}
\def\Deltas{\boldsymbol{\Delta}}
\def\cc{{\sf c}}
\def\ww{{\mathcal{W}}}
\def\Aup{\overline{\rm{A}}}
\def\ellup{\overline{\rm{\ell}}}
\def\elldown{\underline{\rm{\ell}}}
\renewcommand{\cite}{\citep}
\begin{document}

\maketitle

\begin{abstract}
We consider a class of structured, nonconvex, nonsmooth optimization problems under orthogonality constraints, where the objectives combine a smooth function, a nonsmooth concave function, and a nonsmooth weakly convex function. This class of problems finds diverse applications in statistical learning and data science. Existing methods for addressing these problems often fail to exploit the specific structure of orthogonality constraints, struggle with nonsmooth functions, or result in suboptimal oracle complexity. We propose {\sf OADMM}, an Alternating Direction Method of Multipliers (ADMM) designed to solve this class of problems using efficient proximal linearized strategies. Two specific variants of {\sf OADMM} are explored: one based on Euclidean Projection ({\sf OADMM-EP}) and the other on Riemannian Retraction ({\sf OADMM-RR}). Under mild assumptions, we prove that {\sf OADMM} converges to a critical point of the problem with an ergodic convergence rate of $\mathcal{O}(1/\epsilon^{3})$. Additionally, we establish a polynomial convergence rate or super-exponential convergence rate for {\sf OADMM}, depending on the specific setting, under the Kurdyka-Lojasiewicz (KL) inequality. To the best of our knowledge, this is \textit{the first non-ergodic convergence result} for this class of nonconvex nonsmooth optimization problems. Numerical experiments demonstrate that the proposed algorithm achieves state-of-the-art performance.



\noi \textbf{Keywords:} Orthogonality Constraints; Nonconvex Optimization; Nonsmooth Composite Optimization; ADMM; Convergence Analysis

\end{abstract}

\section{Introduction}

This paper focuses on the following nonsmooth composite optimization problem under orthogonality constraints (`$\triangleq$' means define):
\beq\label{eq:main}
\min_{\X \in \Rn^{n\times r}}\, F(\X)\triangleq f (\X) - g(\X) +  h(\AA(\X)),~s.t.~\X\trans\X = \I_r.
\eeq
\noi Here, $n\geq r$, $\AA(\X)\in \Rn^{m}$ is a linear mapping of $\X$, and $\I_r$ is a $r \times r$ identity matrix. For conciseness, the orthogonality constraints $\X\trans\X = \I_r$ in Problem (\ref{eq:main}) is rewritten as $\X \in\MM \in \Rn^{n\times r}$, with $\MM$ representing the Stiefel manifold in the literature \cite{EdelmanAS98,Absil2008}.

We impose the following assumptions on Problem (\ref{eq:main}) throughout this paper. ($\mathbb{A}$-i) $f(\X)$ is $L_f$-smooth, satisfying $\|\nabla f(\X) - \nabla f(\X')\|_{\fro} \leq L_f \|\X-\X'\|_{\fro}$ holds for all $\X,\X'\in\Rn^{n\times r}$. This implies: $| f(\X) - f(\X') - \la \nabla f(\X'), \X-\X' \ra| \leq \tfrac{L_f}{2}\|\X-\X'\|_{\fro}^2$ (cf. Lemma 1.2.3 in \cite{Nesterov03}). We also assume that $f(\X)$ demonstrates $C_f$-Lipschitz continuity, with $\|\nabla f(\X)\|_{\fro}\leq C_f$ for all $\X\in \MM$. The convexity of $f(\X)$ is not assumed. ($\mathbb{A}$-ii) The function $g(\cdot)$ is convex, proper, and $C_g$-Lipschitz continuous, though it is not necessarily smooth. ($\mathbb{A}$-iii) The function $h(\cdot)$ is proper, lower semicontinuous, $C_h$-Lipschitz continuous, and potentially nonsmooth. Also, it is weakly convexity with constant $W_h\geq0$, which implies that the function $h(\y)+\frac{W_h}{2}\|\y\|_2^2$ is convex for all $\y\in\Rn^{m}$. ($\mathbb{A}$-iv) The proximal operator, $\prox_{\mu}(\y')\triangleq\arg \min_{\y} \tfrac{1}{2\mu}\|\y-\y'\|_2^2 + h(\y)$, can be computed efficiently and exactly for any given $\mu>0$ and $\y'\in\Rn^m$.

Problem (\ref{eq:main}) represents an optimization framework that plays a crucial role in a variety of statistical learning and data science models. These models include sparse Principal Component Analysis (PCA) \cite{journee2010generalized,lu2012augmented}, deep neural networks \cite{cho2017riemannian,xie2017all,bansal2018can,CogswellAGZB15,HuangG23}, orthogonal nonnegative matrix factorization \cite{jiang2022exact}, range-based independent component analysis \cite{selvan2015range}, and dictionary learning \cite{zhai2020complete}.



\subsection{Related Work}

$\blacktriangleright$ \textbf{Optimization under Orthogonality Constraints}. Solving Problem (\ref{eq:main}) is challenging due to the computationally expensive and non-convex orthogonality constraints. Existing methods can be divided into three classes. \bfit{(i)} Geodesic-like methods \cite{EdelmanAS98,abrudan2008steepest,Absil2008,jiang2015framework}. These methods involve calculating geodesics by solving ordinary differential equations, which can introduce significant computational complexity. To mitigate this, geodesic-like methods iteratively compute the geodesic logarithm using simple linear algebra calculations. Efficient constraint-preserving update schemes have been integrated with the Barzilai-Borwein (BB) stepsize strategy \cite{wen2013feasible,jiang2015framework} for minimizing smooth functions under orthogonality constraints. \bfit{(ii)} Projection and retractions methods \cite{Absil2008,matcom2013}. These methods maintain orthogonality constraints through projection or retraction. They reduce the objective value by using its current Euclidean gradient direction or Riemannian tangent direction, followed by an orthogonal projection operation. This projection can be computed using polar decomposition or singular value decomposition, or approximated with QR factorization. \bfit{(iii)} Multiplier correction methods \cite{Gau2018,Gau2019,xiao2020class}. Leveraging the insight that the Lagrangian multiplier associated with the orthogonality constraint is symmetric and has an explicit closed-form expression at the first-order optimality condition, these methods tackle an alternative unconstrained nonlinear objective minimization problem, rather than the original smooth function under orthogonality constraints.

$\blacktriangleright$ \textbf{Optimization with Nonsmooth Objectives}. Another challenge in addressing Problem (\ref{eq:main}) stems from the nonsmooth nature of the objective function. Existing methods for tackling this challenge fall into three main categories. \bfit{(i)} Subgradient methods \cite{ferreira1998subgradient,HwangCRIAJS15,li2021weakly}. Subgradient methods, analogous to gradient descent methods, can incorporate various geodesic-like and projection-like techniques. However, they often exhibit slower convergence rates compared to other approaches. \bfit{(ii)} Proximal gradient methods \cite{chen2020proximal}. These methods use a semi-smooth Newton approach to solve a strongly convex minimization problem over the tangent space, finding a descent direction while preserving the orthogonality constraint through a retraction operation. \bfit{(iii)} Operator splitting methods \cite{lai2014splitting,chen2016augmented,zhang2020primal}. These methods introduce linear constraints to break down the original problem into simpler subproblems that can be solved separately and exactly. Among these, ADMM is a promising solution for Problem (\ref{eq:main}) due to its capability to handle nonsmooth objectives and nonconvex constraints separately and alternately. Several ADMM-like algorithms have been proposed for solving nonconvex problems \cite{boct2020proximal,Boct2019SIOPT,wang2019global,li2015global,HeY12,yuanADMM,zhang2020primal}, but these methods fail to exploit the specific structure of orthogonality constraints or cannot be adapted to solve Problem (\ref{eq:main}). \bfit{(iv)} Other methods. {\sf OBCD} \cite{yuan2023OBCD} has been proposed to solve a specific class of our problems, while the inexact augmented Lagrangian method {\sf ManIAL} was introduced in \cite{deng2024oracle}.

$\blacktriangleright$ \textbf{Detailed Discussions on Operator Splitting Methods}. We list some popular variants of operator splitting methods for tackling Problem (\ref{eq:main}). Initially, two natural splitting strategies are used in the literature:
\beq
\ts\min_{\X,\y}\,F_1(\X,\y)\triangleq f(\X)-g(\X)+h(\y) + \iota_{\MM}(\X),~s.t.~\AA(\X)=\y \label{eq:splitting:1}\\
\ts \min_{\X,\Y}\,F_2(\X,\Y)\triangleq f(\X)-g(\X)+h(\AA(\X)) + \iota_{\MM}(\Y),~s.t.~\X=\Y.\label{eq:splitting:2}
\eeq
\noi (\bfit{a}) Smoothing Proximal Gradient Methods ({\sf SPGM}, \cite{beck2023dynamic,BohmW21}) incorporate a penalty (or smoothing) parameter $\mu \rightarrow 0$ to penalize the squared error in the constraints, resulting in the subsequent minimization problem \cite{beck2023dynamic,BohmW21,chen2012smoothing}: $\min_{\X,\y}~F_1(\X,\y) + \tfrac{1}{2\mu}\|\AA(\X)-\y\|_2^2$. During each iteration, {\sf SPGM} employs proximal gradient strategies to alternatively minimize \textit{w.r.t.} $\X$ and $\y$. (\bfit{b}) Splitting Orthogonality Constraints Methods ({\sf SOCM}, \cite{lai2014splitting}) use the following iteration scheme: $\X^{t+1} \thickapprox \arg \min_{\X} F_2(\X,\Y^t) + \la \Z^t, \X - \Y^t \ra + \tfrac{\beta}{2}\|\X-\Y\|_{\fro}^2$, $\Y^{t+1} \in \arg \min_{\Y} F_2(\X^{t+1},\Y) + \la \Z^t, \X^{t+1} - \Y \ra + \tfrac{\beta}{2}\|\X^{t+1}-\Y\|_{\fro}^2$, and $\Z^{t+1} = \Z^t + \beta (\X^{t+1} - \Y^{t+1})$, where $\beta$ is a fixed penalty constant, and $\Z^t$ is the multiplier associated with the constraint $\X=\Y$ at iteration $t$. (\bfit{c}) Similarly, Manifold ADMM ({\sf MADMM}, \cite{kovnatsky2016madmm}) iterates as follows: $\X^{t+1} \thickapprox \arg \min_{\X} F_1(\X,\y^t) + \la \z^t, \AA(\X) - \y^t \ra + \tfrac{\beta}{2}\|\AA(\X) - \y^t\|_{\fro}^2$, $\y^{t+1} \in \arg \min_{\y} F_1(\X^{t+1},\y) + \la \z^t, \AA(\X^{t+1}) - \y \ra + \tfrac{\beta}{2}\|\AA(\X^{t+1}) - \y\|_{\fro}^2$, and $\z^{t+1} = \z^t + \beta (A(\X^{t+1}) - \y^{t+1})$, where $\z^t$ is the multiplier associated with the constraint $A(\X) - \y=\zero$ at iteration $t$. (\bfit{d}) Like {\sf MADMM}, Riemannian ADMM ({\sf RADMM}, \cite{li2022riemannian}) operates using the first splitting strategy in Equation (\ref{eq:splitting:1}). In contrast, it employs a Riemannian retraction strategy to solve the $\X$-subproblem and a Moreau envelope smoothing strategy to solve the $\y$-subproblem.

\textbf{Contributions.} We compare existing methods for solving Problem (\ref{eq:main}) in Table \ref{tab:comp}, and our main contributions are summarized as follows. (\bfit{i}) We introduce {\sf OADMM}, a specialized ADMM designed for structured nonsmooth composite optimization problems under orthogonality constraints in Problem (\ref{eq:main}). Two specific variants of {\sf OADMM} are explored: one based on Euclidean Projection ({\sf OADMM-EP}) and the other on Riemannian Retraction ({\sf OADMM-EP}). Notably, while many existing works primarily address cases where $g(\X)=0$ and $h(\cdot)$ is convex, our approach considers a more general setting where $h(\cdot)$ is weakly convex and $g(\X)$ is convex. (ii) {\sf OADMM} could demonstrate fast convergence by incorporating Nesterov's extrapolation \cite{Nesterov03} into {\sf OADMM-EP} and a Monotone Barzilai-Borwein (MBB) stepsize strategy \cite{wen2013feasible} into {\sf OADMM-RR} to potentially accelerate primal convergence. Both variants also employ an over-relaxation strategy to enhance dual convergence \cite{goncalves2017convergence,yang2017alternating,li2016majorized}. (\bfit{iii}) By introducing a novel Lyapunov function, we establish the convergence of {\sf OADMM} to critical points of Problem (\ref{eq:main}) within an oracle complexity of $\mathcal{O}(1/\epsilon^{3})$, matching the best-known results to date \cite{beck2023dynamic,BohmW21}. This is achieved through a decreasing step size for updating primal and dual variables. In contrast, {\sf RADMM} employs a small constant step size for such updates, resulting in a sub-optimal oracle complexity of $\mathcal{O}(\epsilon^{-4})$ \cite{li2022riemannian}. (\bfit{iv}) We establish a polynomial convergence rate or super-exponential convergence rate for {\sf OADMM}, depending on the specific setting, under the Kurdyka-Lojasiewicz (KL) inequality, providing \textit{the first non-ergodic convergence result} for this class of non-convex nonsmooth optimization problems.




\begin{table*}[!t]
\centering
\caption{Comparison of existing methods for solving Problem (\ref{eq:main}).  }  
\scalebox{0.75}{\begin{tabular}{|p{4.9cm}|p{2.76cm}|p{1cm}|p{2.7cm}|p{1.6cm}|p{2.4cm}|}
\hline
\textbf{\footnotesize Reference} &\textbf{\footnotesize $h(\AA(\X))$} &\textbf{\footnotesize $g(\X)$}    & \textbf{\footnotesize Notable Features} & \textbf{\footnotesize Complexity}& \textbf{\footnotesize Conv. Rate}     \\
\hline
{\sf SOCM} \cite{lai2014splitting} & convex $h(\cdot)$ &    empty  & $\sigma=1$, $\alpha=0$ &  \text{unknown}& \text{unknown}   \\
\hline
{\sf MADMM} \cite{kovnatsky2016madmm} & convex $h(\cdot)$ &    empty  & $\sigma=1$, $\alpha=0$ &  \text{unknown}& \text{unknown}   \\
\hline
{\sf RSG} \cite{li2021weakly} & weakly convex $h(\cdot)$ & empty  & -- & $\mathcal{O}(\epsilon^{-4})$   & \text{unknown} \\
\hline
{\sf ManPG} \cite{chen2020proximal} & $h(\AA(\X))=\|\X\|_1$ & empty  & \text{hard subproblem } & $\mathcal{O}(\epsilon^{-2})$ & \text{unknown}   \\
\hline
{\sf OBCD} \cite{yuan2023OBCD} & separable $h(\cdot)$ & empty  &  hard subproblem & $\mathcal{O}(\epsilon^{-2})$ &  \text{unknown}  \\
\hline
{\sf RADMM} \cite{li2022riemannian} & convex $h(\cdot)$ &   empty  &  $\sigma=1$, $\alpha=0$ & $\mathcal{O}(\epsilon^{-4})$ & \text{unknown}   \\
\hline
{\sf ManIAL} \cite{deng2024oracle} & convex $h(\cdot)$ &   empty  &  \text{inexact subproblem} & $\mathcal{O}(\epsilon^{-3})$ & \text{unknown}   \\
\hline
{\sf SPGM} \cite{beck2023dynamic} & convex $h(\cdot)$ &   empty  & -- & $\mathcal{O}(\epsilon^{-3})$& \text{unknown}    \\
\hline
{\sf OADMM-EP}[ours] & weakly convex $h(\cdot)$ &   convex  & $\sigma \in[1,2)$, $\alpha>0$ & $\mathcal{O}(\epsilon^{-3})$  & $\checkmark$ Theorem \ref{theorem:KL:rate:Exponent:P} \\
\hline
{\sf OADMM-RR}[ours] & weakly convex $h(\cdot)$& convex  &  $\sigma \in[1,2)$,  \text{MBB}  & $\mathcal{O}(\epsilon^{-3})$ & $\checkmark$ Theorem \ref{theorem:KL:rate:Exponent:P} \\
\hline
\end{tabular}}
\smallskip
\label{tab:comp}
\end{table*}

\section{Technical Preliminaries} \label{sect:preli}

This section provides some technical preliminaries on Moreau envelopes for weakly convex functions and manifold optimization.

\textbf{Notations.} We define $[n]\triangleq \{1,2,\ldots,n\}$. We use $\AA\trans(\cdot)$ to denote the adjoin operator of $\AA(\cdot)$ with $\la\AA(\X),\z\ra = \la\X,\AA\trans(\z)\ra$ for all $\X\in\Rn^{n\times r}$ and $\z\in\Rn^m$. We define $\Aup\triangleq \max_{\V}\|\AA(\V)\|_{\fro}/\|\V\|_{\fro}$. We use $\iota_{\MM}(\X)$ to denote the indicator function of orthogonality constants. Further notations, technical preliminaries, and relevant lemmas are detailed in Appendix Section \ref{app:sect:notation}.

\subsection{Moreau Envelopes for Weakly Convex Functions}

We provide the following useful definition.

\begin{definition}
For a proper convex, and Lipschitz continuous function $h(\y): \Rn^{m}\mapsto \Rn$, the Moreau envelope of $h(\y)$ with the parameter $\mu>0$ is given by $h_{\mu}(\y)\triangleq \min_{\breve{\y}} h(\breve{\y}) + \frac{1}{2\mu}\|\breve{\y}-\y\|_{2}^2$.
\end{definition}


We show some useful properties of Moreau envelope for weakly convex functions.

\begin{lemma} \label{lemma:well:knwon:3}
(\cite{BohmW21}) Let $h: \Rn^{m} \mapsto \Rn$ to be a proper, $W_h$-weakly convex, and lower semicontinuous function. Assume $\mu\in(0,W_h^{-1})$. We have the following results. The function $h_{\mu}(\cdot)$ is continuously differentiable with gradient $\nabla h_{\mu}(\y) = \frac{1}{\mu}(\y - \prox_{\mu}(\y))$ for all $\y$, where $\prox_{\mu}(\y) \triangleq \arg \min_{\breve{\y}}  h(\breve{\y}) + \frac{1}{2\mu}\|\breve{\y}-\y\|_2^2$. This gradient is $\max(\mu^{-1}, \frac{W_h}{1-\mu W_h})$-Lipschitz continuous. In particular, when $\mu \in (0,\frac{1}{2 W_h}]$, the condition $\mu^{-1}\geq \frac{W_h}{1-\mu W_h}$ ensures that $h_{\mu}(\y)$ is $(\mu^{-1})$-smooth and $(\mu^{-1})$-weakly convex.

\end{lemma}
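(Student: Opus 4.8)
The plan is to reduce everything to the classical theory of Moreau envelopes of \emph{convex} functions by absorbing the weak-convexity modulus into a quadratic. Since $h$ is $W_h$-weakly convex, $\tilde h(\cdot)\triangleq h(\cdot)+\tfrac{W_h}{2}\|\cdot\|_2^2$ is proper, lsc and convex. Put $\lambda\triangleq\mu^{-1}-W_h>0$ (positive because $\mu<W_h^{-1}$) and $a\triangleq(1-\mu W_h)^{-1}\ge1$; completing the square gives, for every $\breve{\y}$,
\[
h(\breve{\y})+\tfrac{1}{2\mu}\|\breve{\y}-\y\|_2^2 \;=\; \tilde h(\breve{\y})+\tfrac{\lambda}{2}\|\breve{\y}-a\y\|_2^2 \;-\; \tfrac{W_h}{2(1-\mu W_h)}\|\y\|_2^2 .
\]
Because $\tilde h(\cdot)+\tfrac{\lambda}{2}\|\cdot-a\y\|_2^2$ is $\lambda$-strongly convex, the inner minimization defining $h_\mu$ has a unique solution; hence $\prox_\mu$ is single-valued, $\prox_\mu(\y)=P(a\y)$ where $P(z)\triangleq\arg\min_{\breve{\y}}\tilde h(\breve{\y})+\tfrac{\lambda}{2}\|\breve{\y}-z\|_2^2$ is the convex proximal map of $\tilde h$ with parameter $1/\lambda$, and $h_\mu(\y)=\tilde h_{1/\lambda}(a\y)-\tfrac{W_h}{2(1-\mu W_h)}\|\y\|_2^2$ with $\tilde h_{1/\lambda}$ the (convex) Moreau envelope of $\tilde h$.

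For part (b), I would invoke the standard facts that $\tilde h_{1/\lambda}$ is $C^1$ with $\nabla\tilde h_{1/\lambda}(z)=\lambda(z-P(z))$ and that $P$ is firmly nonexpansive. Differentiating the identity for $h_\mu$ yields $\nabla h_\mu(\y)=a\lambda(a\y-P(a\y))-aW_h\y$; using $a\lambda=\mu^{-1}$ and that the coefficient of $\y$ simplifies via $a(\mu^{-1}-W_h)=a\lambda=\mu^{-1}$, this collapses to $\nabla h_\mu(\y)=\tfrac1\mu(\y-\prox_\mu(\y))$, which is continuous. For the gradient's Lipschitz constant, set $d\triangleq\y-\y'$ and $w\triangleq P(a\y)-P(a\y')$, so $\nabla h_\mu(\y)-\nabla h_\mu(\y')=\tfrac1\mu(d-w)$; firm nonexpansiveness of $P$ (evaluated at $a\y,a\y'$) gives $\langle w,ad\rangle\ge\|w\|_2^2$ and $\|w\|_2\le a\|d\|_2$, whence $\|d-w\|_2^2\le\|d\|_2^2-(2/a-1)\|w\|_2^2=\|d\|_2^2-(1-2\mu W_h)\|w\|_2^2$. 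If $\mu\le\tfrac{1}{2W_h}$ the last term is $\le0$, giving $\|d-w\|_2\le\|d\|_2$ and Lipschitz constant $\mu^{-1}$; if $\mu>\tfrac{1}{2W_h}$, bounding $-(1-2\mu W_h)\|w\|_2^2\le(2\mu W_h-1)a^2\|d\|_2^2$ and using the algebraic identity $1+(2\mu W_h-1)a^2=\mu^2W_h^2/(1-\mu W_h)^2$ gives constant $\tfrac{W_h}{1-\mu W_h}$. Taking the larger of the two yields $\max\{\mu^{-1},\tfrac{W_h}{1-\mu W_h}\}$, and the regime $\mu\in(0,\tfrac{1}{2W_h}]$ is precisely the first branch.

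For part (a), the first-order optimality condition of the (strongly convex) inner problem reads $\tfrac1\mu(\y-\prox_\mu(\y))\in\partial h(\prox_\mu(\y))$; since $h$ is $C_h$-Lipschitz, every element of its (limiting, equivalently Clarke) subdifferential has norm at most $C_h$, so $\|\nabla h_\mu(\y)\|_2=\tfrac1\mu\|\y-\prox_\mu(\y)\|_2\le C_h$ for all $\y$, and $C_h$-Lipschitz continuity of $h_\mu$ follows since it is $C^1$ with gradient bounded by $C_h$.

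The only delicate point is the sharp Lipschitz constant in (b): a crude triangle-inequality bound that uses merely that $\prox_\mu$ is $(1-\mu W_h)^{-1}$-Lipschitz overshoots the target, so one must exploit firm nonexpansiveness of the \emph{convex} proximal map $P$ together with the case split at $\mu=\tfrac{1}{2W_h}$. Everything else — the completion-of-squares identity, the differentiation/cancellation giving the gradient formula, and the subgradient norm bound — is routine.
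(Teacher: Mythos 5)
The paper does not prove Lemma \ref{lemma:well:knwon:3}: it is presented as a known property of Moreau envelopes of weakly convex functions and is cited to \cite{BohmW21}, with no ``Proof in Appendix'' tag (the remark after Lemma \ref{lamma:lip:mu} explicitly labels only Lemmas \ref{lemma:weakly:convex:convex}--\ref{lamma:lip:mu} as novel). Your proof is a correct, self-contained derivation along the standard lines: writing $\tilde h = h + \tfrac{W_h}{2}\|\cdot\|_2^2$, completing the square to express $h_\mu(\y) = \tilde h_{1/\lambda}(a\y) - \tfrac{W_h}{2(1-\mu W_h)}\|\y\|_2^2$ with $\lambda = \mu^{-1}-W_h$ and $a=(1-\mu W_h)^{-1}$, and then importing the classical facts for the \emph{convex} envelope $\tilde h_{1/\lambda}$. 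I checked the algebra: $a\lambda = \mu^{-1}$ and $a(\mu^{-1}-W_h)=\mu^{-1}$ give the clean gradient formula; $2/a-1 = 1-2\mu W_h$ and $1+(2\mu W_h-1)a^2 = \mu^2 W_h^2/(1-\mu W_h)^2$ give the two Lipschitz branches, which agree with $1/\mu$ at $\mu = \tfrac{1}{2W_h}$. The Lipschitz-constant argument correctly exploits firm nonexpansiveness of $P$ rather than mere nonexpansiveness of $\prox_\mu$ — a crude bound $\|w\|\le a\|d\|$ alone would give only $\tfrac{1}{\mu}(1+a)$, which is weaker. Part (a) uses the first-order optimality of $\prox_\mu(\y)$ together with $C_h$-Lipschitzness of $h$ (a standing assumption $(\mathbb{A}$-iii$)$ of the paper, so valid to invoke even though the lemma's local hypotheses omit it) to bound $\|\nabla h_\mu\|\le C_h$ everywhere; this is correct since for a locally Lipschitz weakly convex $h$ the limiting subdifferential is bounded by the Lipschitz modulus. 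In short: correct and essentially the textbook convexification argument that the cited reference relies on.
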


\begin{lemma} \label{lemma:mu:continous}
(Proof in Appendix \ref{app:lemma:mu:continous}) Assume $0<\mu_2<\mu_1<\frac{1}{W_h}$, and fixing $\y\in\Rn^{m}$. We have: $0 \leq h_{\mu_2}(\y) - h_{\mu_1}(\y) \leq \min\{  \tfrac{\mu_1}{2 \mu_2}, 1  \} \cdot (\mu_1-\mu_2) C_h^2$.
\end{lemma}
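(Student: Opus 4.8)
The plan is to establish the two inequalities separately, in each case by comparing the two envelopes evaluated at a well-chosen point rather than by manipulating optimality conditions. For the lower bound $0\le h_{\mu_2}(\y)-h_{\mu_1}(\y)$, I would note that $\mu_2<\mu_1$ forces $\tfrac{1}{2\mu_2}\ge \tfrac{1}{2\mu_1}$, so for every $\breve{\y}$ we have $h(\breve{\y})+\tfrac{1}{2\mu_2}\|\breve{\y}-\y\|_2^2\ge h(\breve{\y})+\tfrac{1}{2\mu_1}\|\breve{\y}-\y\|_2^2\ge h_{\mu_1}(\y)$; taking the infimum over $\breve{\y}$ on the left gives $h_{\mu_2}(\y)\ge h_{\mu_1}(\y)$. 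This uses only that the envelope is a pointwise infimum and needs no convexity.

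For the upper bound, the first step is to plug the $\mu_1$-proximal point into the $\mu_2$-problem. Write $\breve{\y}^\star\triangleq\prox_{\mu_1}(\y)$, which is well defined since $\mu_1<1/W_h$. Then $h_{\mu_1}(\y)=h(\breve{\y}^\star)+\tfrac{1}{2\mu_1}\|\breve{\y}^\star-\y\|_2^2$ by definition of the prox, while $h_{\mu_2}(\y)\le h(\breve{\y}^\star)+\tfrac{1}{2\mu_2}\|\breve{\y}^\star-\y\|_2^2$ by definition of the infimum. Subtracting,
\[
h_{\mu_2}(\y)-h_{\mu_1}(\y)\le\Big(\tfrac{1}{2\mu_2}-\tfrac{1}{2\mu_1}\Big)\|\breve{\y}^\star-\y\|_2^2=\tfrac{\mu_1-\mu_2}{2\mu_1\mu_2}\|\breve{\y}^\star-\y\|_2^2 .
\]
By Lemma \ref{lemma:well:knwon:3}, $\nabla h_{\mu_1}(\y)=\tfrac{1}{\mu_1}(\y-\breve{\y}^\star)$ and $\|\nabla h_{\mu_1}(\y)\|_2\le C_h$ because $h_{\mu_1}$ is $C_h$-Lipschitz, hence $\|\breve{\y}^\star-\y\|_2\le \mu_1 C_h$. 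Substituting yields the first branch $h_{\mu_2}(\y)-h_{\mu_1}(\y)\le \tfrac{\mu_1}{2\mu_2}(\mu_1-\mu_2)C_h^2$.

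The remaining branch $h_{\mu_2}(\y)-h_{\mu_1}(\y)\le(\mu_1-\mu_2)C_h^2$ is \textbf{the main obstacle}: the one-shot estimate above carries the factor $\tfrac{\mu_1}{2\mu_2}$, which blows up when $\mu_2\ll\mu_1$, so a refinement is needed. I would obtain it by chaining the first branch along a geometric subdivision: set $\nu_0=\mu_2$ and $\nu_i=\min\{2\nu_{i-1},\mu_1\}$, which reaches $\nu_k=\mu_1$ after finitely many steps with $0<\nu_{i-1}<\nu_i\le\mu_1<1/W_h$ and $\nu_i/\nu_{i-1}\le 2$; applying the first branch to each consecutive pair gives $h_{\nu_{i-1}}(\y)-h_{\nu_i}(\y)\le\tfrac{\nu_i}{2\nu_{i-1}}(\nu_i-\nu_{i-1})C_h^2\le(\nu_i-\nu_{i-1})C_h^2$, and summing telescopes to $(\mu_1-\mu_2)C_h^2$. (Equivalently, Danskin's theorem shows $\mu\mapsto h_\mu(\y)$ is $C^1$ on $(0,1/W_h)$ with $\partial_\mu h_\mu(\y)=-\tfrac12\|\nabla h_\mu(\y)\|_2^2$, of magnitude at most $\tfrac{C_h^2}{2}$; integrating from $\mu_2$ to $\mu_1$ even gives constant $\tfrac12$.) Taking the smaller of the two upper bounds produces the claimed $\min\{\tfrac{\mu_1}{2\mu_2},1\}\cdot(\mu_1-\mu_2)C_h^2$.

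Apart from this refinement, the only point needing minor care is that $\prox_{\mu_1}$ is single-valued and the gradient identity is available, both guaranteed by $\mu_1<1/W_h$ combined with Lemma \ref{lemma:well:knwon:3}; the rest of the argument is routine.
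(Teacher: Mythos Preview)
Your proof is correct and takes a genuinely different route from the paper's. For the lower bound, the paper writes out $h_{\mu_1}(\y)-h_{\mu_2}(\y)$ explicitly using both proximal points, invokes $W_h$-weak convexity of $h$, and substitutes the two first-order optimality conditions to simplify the expression down to $-\tfrac{\mu_2}{2}\|\s_2\|_2^2(1-\tfrac{\mu_2}{\mu_1})\le 0$. Your monotonicity-of-infima argument is shorter and does not use weak convexity at all. For the upper bound, the paper again expands $h_{\mu_2}(\y)-h_{\mu_1}(\y)$ via both prox points and weak convexity, then bounds the resulting expression in two ways simultaneously (one using $W_h\le 1/\mu_2$, the other using $W_h\le 1/\mu_1$), which delivers both branches of the $\min$ in a single algebraic pass. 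You instead obtain the $\tfrac{\mu_1}{2\mu_2}$ branch by the elementary ``plug the $\mu_1$-minimizer into the $\mu_2$-problem'' trick and then recover the constant-$1$ branch by a geometric telescoping (or Danskin). The paper's computation is more self-contained and yields both bounds at once, but leans on weak convexity throughout; your argument is more modular and essentially convexity-free (weak convexity enters only to guarantee that $\prox_{\mu_1}(\y)$ exists and the gradient identity from Lemma~\ref{lemma:well:knwon:3} holds), at the price of the extra chaining step. Either way the Danskin remark you give actually improves the second branch to $\tfrac12(\mu_1-\mu_2)C_h^2$, which is sharper than what is stated.
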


\begin{lemma} \label{lemma:lip:mu}
(Proof in Appendix \ref{app:lemma:lip:mu}) Assume $0<\mu_2<\mu_1 \leq \frac{1}{2 W_h}$, and fixing $\y\in\Rn^{m}$. We have: $\|\nabla h_{\mu_1}(\y) - \nabla h_{\mu_2}(\y)\| \leq (\tfrac{\mu_1}{\mu_2} - 1) C_h$.
\end{lemma}

\begin{lemma} \label{lemma:smoothing:problem:prox}
(Proof in Appendix \ref{app:lemma:smoothing:problem:prox}) Assume that $h(\y)$ is $W_h$-weakly convex, $\mu\in (0,\tfrac{1}{2 W_h}]$, $\beta>\mu^{-1}$. Consider the following strongly convex optimization problem: $\bar{\y} = \arg \min_{\y}  h_{\mu}(\y) + \frac{\beta}{2}\| \y - \mathbf{b} \|_{2}^2$, which is equivalent to: $(\bar{\y},\breve{\y})= \arg \min_{\y,\y'} h(\y') + \tfrac{1}{2\mu} \|\y' - \y\|_{2}^2 + \tfrac{ \beta}{2}\| \y - \mathbf{b} \|_{2}^2$. We have: (\bfit{a}) $\bar{\y} = \frac{(\breve{\y}  +\mu\beta \mathbf{b} )}{1+\mu\beta} $, where $\breve{\y}=\arg \min_{\y}~  h(\y) + \tfrac{\beta}{2 (1+\mu\beta)}\|\y-\mathbf{b}\|_2^2=\prox(\b;\mu+1/\beta)$. (\bfit{b}) $\beta (\mathbf{b}- \bar{\y}) \in \partial h(\breve{\y})$. (\bfit{c}) $\|\bar{\y}- \breve{\y}\|\leq \mu C_h$.


\end{lemma}


\begin{remark}
(\bfit{i}) Lemmas \ref{lemma:mu:continous} and \ref{lemma:lip:mu} presented in this paper are novel. (\bfit{ii}) The upper bound in Lemma \ref{lemma:mu:continous} is slightly better than the bound established in Lemma 4.1 of \cite{BohmW21}. (\bfit{iii}) Lemma \ref{lemma:smoothing:problem:prox} is very critical in our algorithm development and theoretical analysis.
\end{remark}

\subsection{Manifold Optimization}

We define the $\epsilon$-stationary point of Problem (\ref{eq:main}) as follows.

\begin{definition}
 (\rm{First-Order Optimality Conditions}, \cite{chen2020proximal,li2022riemannian}) The solution $(\ddot{\X},\ddot{\y},\ddot{\z})$ with $\ddot{\X}\in\MM$ is called an $\epsilon$-stationary point of Problem (\ref{eq:main}) if: $\Crit(\ddot{\X},\ddot{\y},\ddot{\z}) \leq \epsilon$, where $\Crit(\X,\y,\z) \triangleq\|\AA(\X)- \y\|+ \| \partial h(\y) - \z\| + \| \Proj_{\T_{\X} \MM} ( \nabla f(\X) - \partial g(\X)  + \AA\trans (\z) )\|_{\fro}$. Here, according to \cite{absil2008optimization}, for all $\X\in\MM$ and $\Deltas\in\Rn^{n\times r}$, we have: $\Proj_{\T_{\X}\MM}(\Deltas) = \Deltas - \tfrac{1}{2}\X (\Deltas \trans \X+\X\trans \Deltas)$.

\end{definition}

The proposed algorithm is an iterative procedure. After shifting the current iterate $\X\in\MM$ in the search direction, it may no longer reside on $\MM$. Therefore, we must retract the point onto $\MM$ to form the next iterate. The following definition is useful in this context.

\begin{definition}
A retraction on $\MM$ is a smooth map \cite{absil2008optimization}: $\Retr_{\X}(\Deltas) \in \MM$ with $\X\in\MM$ and $\Deltas \in \Rn^{n\times r}$ satisfying $\Retr_{\X}(\zero) = \X$, and $\lim_{ \T_{\X}\MM \ni \Deltas \rightarrow \zero} \tfrac{ \| \Retr_{\X}(\Deltas) - \X - \Deltas  \|_{\fro}}{\|\Deltas\|_{\fro}} = 0$ for any $\X\in\MM$.

\end{definition}

\begin{remark}
Several retractions on the Stiefel manifold have been explored in literature \cite{absil2012projection,Absil2008}. We present two examples below. (\bfit{i}) Polar Decomposition-Based Retraction: $\Retr_{\X}(\Deltas)=(\X+\Deltas) (\I_r + \Deltas\trans \Deltas)^{-1/2}$. (\textit{ii}) QR-Decomposition-Based Retraction: $\Retr_{\X}(\Deltas)=\qf (\X+\Deltas)$, where $\qf(\X)$ is the $\Q$-factor in the thin QR-decomposition of $\X$.
\end{remark}

The following lemma concerning the retraction operator is useful for our subsequent analysis.

\begin{lemma} \label{lemma:M12}
(\cite{boumal2019global}) Let $\X\in\MM$ and $\Deltas \in \T_{\X}\MM$. There exists positive constants $\{\dot{k},\ddot{k}\}$ such that $\|\Retr_{\X}(\Deltas)-\X\|_{\fro}\leq \dot{k} \|\Deltas\|_{\fro}$, and $\|\Retr_{\X}(\Deltas) - \X - \Deltas\|_{\fro} \leq \tfrac{1}{2}\ddot{k} \|\Deltas\|_{\fro}^2$.
\end{lemma}

Furthermore, we present the following three insightful lemmas.

\begin{lemma} \label{lemma:bound:PXX:XX}

(Proof in Appendix \ref{app:lemma:bound:PXX:XX}) Let $\X\in\MM$ and $\Deltas \in \Rn^{n\times r}$, we have $\|\Proj_{\T_{\X}\MM}(\Deltas)\|_{\fro}\leq \|\Deltas\|_{\fro}$. 
\end{lemma}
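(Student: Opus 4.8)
The plan is to exploit the explicit formula $\Proj_{\T_{\X}\MM}(\Deltas) = \Deltas - \tfrac12 \X(\Deltas\trans\X + \X\trans\Deltas)$ given in the $\epsilon$-stationary point definition, together with the decomposition of $\Rn^{n\times r}$ into the tangent space $\T_{\X}\MM$ and its normal space at $\X$. First I would recall that for $\X\in\MM$ the normal space at $\X$ consists of matrices of the form $\X S$ with $S$ symmetric, and that $\Proj_{\T_{\X}\MM}$ is precisely the orthogonal projection (with respect to the Frobenius inner product) onto $\T_{\X}\MM$. Granting that $\Proj_{\T_{\X}\MM}$ is an orthogonal projector, the inequality $\|\Proj_{\T_{\X}\MM}(\Deltas)\|_{\fro}\leq\|\Deltas\|_{\fro}$ is immediate, since orthogonal projections are nonexpansive: $\|\Deltas\|_{\fro}^2 = \|\Proj_{\T_{\X}\MM}(\Deltas)\|_{\fro}^2 + \|\Deltas - \Proj_{\T_{\X}\MM}(\Deltas)\|_{\fro}^2 \geq \|\Proj_{\T_{\X}\MM}(\Deltas)\|_{\fro}^2$.

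Since the excerpt only hands us the algebraic formula (not the assertion that it is an orthogonal projector), the honest route is a direct computation. Write $N \triangleq \Deltas - \Proj_{\T_{\X}\MM}(\Deltas) = \tfrac12 \X(\Deltas\trans\X + \X\trans\Deltas)$, which has the form $\X S$ with $S = \tfrac12(\Deltas\trans\X + \X\trans\Deltas)$ symmetric. The key step is to verify the orthogonality relation $\la \Proj_{\T_{\X}\MM}(\Deltas), N \ra = 0$: expanding $\la \Deltas - \X S, \X S\ra = \la \Deltas, \X S\ra - \la \X S, \X S \ra$, using $\X\trans\X = \I_r$ to get $\|\X S\|_{\fro}^2 = \tr(S\trans \X\trans \X S) = \|S\|_{\fro}^2$, and using $\la \Deltas, \X S \ra = \tr(S\trans \X\trans \Deltas) = \tr(S \X\trans\Deltas)$; then symmetrizing via $S = S\trans$ gives $\tr(S\X\trans\Deltas) = \tfrac12\tr(S(\X\trans\Deltas + \Deltas\trans\X)) = \tr(S\cdot S) = \|S\|_{\fro}^2$, so the two terms cancel. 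With $\Deltas = \Proj_{\T_{\X}\MM}(\Deltas) + N$ an orthogonal decomposition, the Pythagorean identity $\|\Deltas\|_{\fro}^2 = \|\Proj_{\T_{\X}\MM}(\Deltas)\|_{\fro}^2 + \|N\|_{\fro}^2$ yields the claim.

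The only mild subtlety — and the step I would treat most carefully — is the symmetrization manipulation inside the trace, i.e. justifying $\tr(S\X\trans\Deltas) = \|S\|_{\fro}^2$ from $S = \tfrac12(\X\trans\Deltas + \Deltas\trans\X)$; this uses $\tr(SM) = \tr(SM\trans)$ when $S$ is symmetric (apply with $M = \X\trans\Deltas$), together with $\tr(S^2) = \|S\|_{\fro}^2$ for symmetric $S$. Everything else is bookkeeping with the Frobenius inner product and $\X\trans\X = \I_r$. No appeal to Lemma \ref{lamma:M12} or to retraction properties is needed; the result is purely linear-algebraic and holds for arbitrary $\Deltas\in\Rn^{n\times r}$, as stated.
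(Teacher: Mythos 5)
Your proof is correct and ultimately rests on the same algebra as the paper's: the explicit formula for $\Proj_{\T_{\X}\MM}$, the identity $\|\X\mathbf{P}\|_{\fro}=\|\mathbf{P}\|_{\fro}$ from $\X\trans\X=\I_r$, and a trace-symmetrization using the symmetry of $\Deltas\trans\X+\X\trans\Deltas$. The only difference is packaging — you explicitly verify the orthogonality $\la \Proj_{\T_{\X}\MM}(\Deltas), N\ra=0$ and then invoke the Pythagorean identity, whereas the paper computes $\|\Proj_{\T_{\X}\MM}(\Deltas)\|_{\fro}^2-\|\Deltas\|_{\fro}^2=-\tfrac14\|\mathbf{U}+\mathbf{U}\trans\|_{\fro}^2\le 0$ directly with $\mathbf{U}=\Deltas\trans\X$; the two computations are term-for-term equivalent (the paper's residual $\tfrac14\|\mathbf{U}+\mathbf{U}\trans\|_{\fro}^2$ is precisely your $\|N\|_{\fro}^2$).
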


\begin{lemma} \label{lemma:GD:bound}
(Proof in Appendix \ref{app:lemma:GD:bound}) For any $\rho>0$, $\G\in\Rn^{n\times r}$, and $\X\in\MM$, we define $\GGG_{\rho} \triangleq \G-\rho\X\G\trans\X - (1-\rho)\X\X\trans\G$. It follows that: (\bfit{a}) $\max(1,2\rho) \cdot \la \G,\GGG_{\rho}\ra \geq \|\GGG_{\rho}\|_{\fro}^2\geq \min(1,\rho^2) \|\GGG_{1}\|_{\fro}^2$. (\bfit{b}) $\min(1,2 \rho) \|\GGG_{1/2}\|_{\fro}\leq\|\GGG_{\rho}\|_{\fro} \leq \max(1,2\rho)\|\GGG_{1/2}\|_{\fro}$.

\end{lemma}

\begin{lemma} \label{lemma:subgradient:R:bound}
(Proof in Appendix \ref{app:lemma:subgradient:R:bound}) Consider the following optimization problem: $\min_{\X\in\MM} f(\X)$, where $f(\X)$ is differentiable. For all $\X\in \MM$, we have: $\dist(\zero,\partial  I_{\MM}(\X) + \nabla f(\X))\leq \| \nabla f(\X)  - \X \nabla f(\X) \trans \X\|_{\fro}$.
\end{lemma}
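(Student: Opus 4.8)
The goal is to bound the distance from the origin to the set $\partial I_{\MM}(\X) + \nabla f(\X)$ by the Riemannian-type quantity $\|\nabla f(\X) - \X \nabla f(\X)\trans \X\|_{\fro}$. The plan is to exhibit one particular element of $\partial I_{\MM}(\X)$ whose sum with $\nabla f(\X)$ has Frobenius norm at most the right-hand side; since $\dist(\zero,\cdot)$ is an infimum over the set, producing a single good witness suffices.

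First I would recall the structure of the normal cone to the Stiefel manifold. For $\X\in\MM$, the (limiting/regular) normal cone $N_{\MM}(\X) = \partial I_{\MM}(\X)$ consists exactly of matrices of the form $\X S$ with $S\in\Rn^{r\times r}$ symmetric; equivalently, $\partial I_{\MM}(\X) = \{\X S : S = S\trans\}$. This is a standard fact (see e.g. \cite{absil2008optimization,Absil2008}), and it is consistent with the tangent-space projection formula already quoted in the excerpt: $\Proj_{\T_{\X}\MM}(\Deltas) = \Deltas - \tfrac12 \X(\Deltas\trans\X + \X\trans\Deltas)$, whose complement $\Deltas - \Proj_{\T_{\X}\MM}(\Deltas) = \tfrac12\X(\Deltas\trans\X+\X\trans\Deltas)$ is precisely $\X$ times a symmetric matrix.

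Next, take the specific choice $\V \triangleq -\tfrac12 \X(\nabla f(\X)\trans\X + \X\trans\nabla f(\X)) \in \partial I_{\MM}(\X)$. Then $\nabla f(\X) + \V = \nabla f(\X) - \tfrac12\X(\nabla f(\X)\trans\X + \X\trans\nabla f(\X)) = \Proj_{\T_{\X}\MM}(\nabla f(\X))$. So $\dist(\zero, \partial I_{\MM}(\X) + \nabla f(\X)) \leq \|\Proj_{\T_{\X}\MM}(\nabla f(\X))\|_{\fro}$. It then remains to show $\|\Proj_{\T_{\X}\MM}(\nabla f(\X))\|_{\fro} \leq \|\nabla f(\X) - \X\nabla f(\X)\trans\X\|_{\fro}$. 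Writing $\G = \nabla f(\X)$, note $\G - \X\G\trans\X = \D_1$ in the notation of Lemma \ref{lemma:GD:bound} (with $\rho=1$), while $\Proj_{\T_{\X}\MM}(\G) = -\D_{1/2}$ — indeed $\D_{1/2} = \tfrac12\X\G\trans\X + \tfrac12\X\X\trans\G - \G = -\Proj_{\T_{\X}\MM}(\G)$. Hence the claim is exactly part (\bfit{d}) of Lemma \ref{lemma:GD:bound} with $\rho=1$, which gives $\|\D_1\|_{\fro} \leq \max(1,1)\|\D_1\|_{\fro}$ — but more directly, part (\bfit{c}) with $\rho=1$ reads $\|\D_1\|_{\fro}\le \|\D_{1/2}\|_{\fro}$... so I must be careful about the direction and instead invoke part (\bfit{d}) with the roles set so that $\|\D_{1/2}\|_{\fro}$ is bounded by $\|\D_1\|_{\fro}$; explicitly, using $\X\trans\X=\I_r$ one checks $\D_{1/2} = \tfrac12(\D_1 - \X\X\trans\G + \X\G\trans\X)\cdot(\ldots)$ — the cleanest route is the elementary identity $\Proj_{\T_{\X}\MM}(\G) = \tfrac12(\G - \X\G\trans\X) + \tfrac12(\G - \X\X\trans\G)$ together with $\|\G - \X\X\trans\G\|_{\fro} \le \|\G - \X\G\trans\X\|_{\fro}$ (because $\X\X\trans\G$ is the Euclidean projection of $\G$ onto the column span of $\X$, hence the nearest such matrix, and $\X\G\trans\X$ lies in that span).

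The main obstacle is the last inequality $\|\Proj_{\T_{\X}\MM}(\G)\|_{\fro}\le\|\G-\X\G\trans\X\|_{\fro}$: one must avoid a circular or loosely-constanted argument. The clean way is to decompose $\Proj_{\T_{\X}\MM}(\G) = \tfrac12(\I - \X\X\trans)\G + \tfrac12(\G - \X\G\trans\X) - \tfrac12(\I-\X\X\trans)\G$... — concretely, split into the component orthogonal to the range of $\X$, namely $(\I-\X\X\trans)\G$, which is common to both $\Proj_{\T_{\X}\MM}(\G)$ and $\G - \X\G\trans\X$, and the component in the range of $\X$, where $\Proj_{\T_{\X}\MM}(\G)$ contributes $\tfrac12\X(\X\trans\G - \G\trans\X)$ (a skew part) while $\G - \X\G\trans\X$ contributes $\X(\X\trans\G - \G\trans\X)$, exactly twice as large; since the two orthogonal blocks are Frobenius-orthogonal in each expression, $\|\Proj_{\T_{\X}\MM}(\G)\|_{\fro}^2 = \|(\I-\X\X\trans)\G\|_{\fro}^2 + \tfrac14\|\X\trans\G-\G\trans\X\|_{\fro}^2 \le \|(\I-\X\X\trans)\G\|_{\fro}^2 + \|\X\trans\G-\G\trans\X\|_{\fro}^2 = \|\G-\X\G\trans\X\|_{\fro}^2$, finishing the proof.
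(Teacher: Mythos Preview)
Your proposal is correct, and after the exploratory detours it lands on a valid argument. Both you and the paper first reduce to $\dist(\zero,\partial I_{\MM}(\X)+\nabla f(\X))\le\|\Proj_{\T_{\X}\MM}(\G)\|_{\fro}$ with $\G=\nabla f(\X)$; the paper asserts this as an equality via the normal-space structure, while you exhibit the explicit witness $\V=-\tfrac12\X(\G\trans\X+\X\trans\G)\in N_{\MM}(\X)$, which is cleaner for the direction actually needed. The genuine difference is in the second step, bounding $\|\Proj_{\T_{\X}\MM}(\G)\|_{\fro}$ by $\|\G-\X\G\trans\X\|_{\fro}$. The paper observes the one-line factorization
\[
\Proj_{\T_{\X}\MM}(\G)=\bigl(\I-\tfrac12\X\X\trans\bigr)\bigl(\G-\X\G\trans\X\bigr),
\]
and then uses $\|\I-\tfrac12\X\X\trans\|\le 1$ (eigenvalues $\tfrac12$ or $1$). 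Your route instead decomposes both expressions orthogonally along $\mathrm{range}(\X)$ and its complement: each has the same off-range block $(\I-\X\X\trans)\G$, while the in-range blocks are $\tfrac12\X(\X\trans\G-\G\trans\X)$ versus $\X(\X\trans\G-\G\trans\X)$, so the squared norms differ by a factor $\tfrac14$ versus $1$ on that block. The paper's factorization is slicker and gives the bound in one multiplicative step; your block decomposition is more transparent about \emph{why} the inequality holds (the skew part is simply halved). Either way the result follows; you could drop the Lemma~\ref{lemma:GD:bound} digression entirely and go straight to the orthogonal split.
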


\begin{remark}The matrix $-\GGG_{\rho}\in \Rn^{n\times r}$ in Lemma \ref{lemma:GD:bound} is closely related to the search descent direction of the proposed {\sf OADMM-RR} algorithm. While one can set $\rho$ to typical values such as $1$ or $1/2$, we consider the setting $\rho\in (0,\infty)$ to enhance the versatility of {\sf OADMM-RR}, aligning with \cite{liu2016quadratic,jiang2015framework}.
\end{remark}


\section{The Proposed {\sf OADMM} Algorithm} \label{sect:algorithm}

This section provides the proposed {\sf OADMM} algorithm for solving Problem (\ref{eq:main}), featuring two variants, one is based on Euclidean Projection ({\sf OADMM-EP}) and the other on Riemannian Retraction ({\sf OADMM-RR}).

Using the Moreau envelope smoothing technique, we consider the following optimization problem:
\beq\label{eq:main:equality}
\min_{\X,\y}\, f(\X) - g(\X) +  h_{\mu}(\y) + \iota_{\MM}(\X),~s.t.~\AA(\X) = \y,
\eeq
\noi where $\mu \rightarrow 0$, and $h_{\mu}(\y)$ is the Moreau Envelope of $h(\y)$. Importantly, $h_{\mu}(\y)$ is $(\mu^{-1})$-smooth when $\mu \leq \frac{1}{2W_h}$, according to Lemma \ref{lemma:well:knwon:3}. It is worth noting that similar smoothing techniques have been used in the design of augmented Lagrangian methods \cite{zeng2022moreau}, and minimax optimization \cite{ZhangX0L20}, and ADMMs \cite{li2022riemannian}. We define the augmented Lagrangian function of Problem (\ref{eq:main:equality}) as follows:
\beq \label{eq:L}
\mathcal{L}(\X,\y,\z,\beta) = \underbrace{f(\X)  + \la \z,\AA(\X) - \y \ra + \tfrac{\beta}{2}\|\AA(\X) - \y\|_{2}^2}_{~\triangleq~\SS(\X,\y,\z,\beta)} - g(\X) + h_{\tau/\beta}(\y) + \iota_{\MM}(\X).
\eeq
\noi Here, $\z$ is the dual variable for the equality constraint, $\mu\triangleq \tau/\beta$ is the smoothing parameter linked to the function $h(\y)$, $\beta$ is the penalty parameter associated with the equality constraint, and $\iota_{\MM}(\X)$ is the indicator function of the set $\MM$.

In simple terms, {\sf OADMM} updates are performed by minimizing the augmented Lagrangian function $\mathcal{L}(\X,\y,\z,\beta)$ over the primal variables $\{\X^t,\y^t\}$ at each iteration, while keeping all other primal and dual variables fixed. The dual variables are updated using gradient ascent on the dual problem.

For updating the primal variable $\X$, we use different strategies, resulting in distinct variants of {\sf OADMM}. We first observe that the function $\mathcal{S}(\X,\y^t,\z^t,\beta^t)$ is $\ell(\beta^t)$-smooth \textit{w.r.t.} $\X$, where $\ell(\beta^t) \triangleq \beta^t \Aup^2 + L_f$. In {\sf OADMM-EP}, we adopt a proximal linearized method based on Euclidean projection \cite{lai2014splitting}, while in {\sf OADMM-RR}, we apply line-search methods on the Stiefel manifold \cite{liu2016quadratic}.

\begin{algorithm}[!t]
\DontPrintSemicolon
\caption{ {\bf {\sf OADMM}: The Proposed ADMM for Solving Problem (\ref{eq:main}).}  } \label{alg:main}
\textbf{Initialization:} \\
\quad Choose $\{\X^0,\y^0,\z^0\}$. Choose $p,\xi\in (0,1)$, $\theta \in (1,\infty)$, $\sigma\in[1,2)$.

\quad Choose $\tau\in [\tfrac{4}{2-\sigma},\infty)$. Choose $\beta^0$ with $\beta^0\geq 2\tau W_h$.

\quad For {\sf OADMM-EP}, choose $\alpha \in [0, \frac{\theta-1}{(\theta+1) (\xi+2)})$.

\quad For {\sf OADMM-RR}, choose $\alpha=0$, $\rho \in (0,\infty)$, $\gamma \in (0,1)$, $\delta \in (0,\tfrac{1}{\max(1,2\rho)})$.

\For{$t$ from 0 to $T$}{
S1) Set $\beta^t = \beta^0 (1 + \xi t^p)$.

S2) Update the primal variable $\X$:

\If{{{\sf OADMM-EP}}}{

Set $\X_{\cc}^{t} = \X^{t}+\alpha (\X^{t}  - \X^{t-1})$, $\G^t \in \nabla_{\X} \SS(\X_{\cc}^t,\y^t,\z^t)-\partial g(\X^t)$.

$\X^{t+1} \in \arg \min_{\X \in\MM} \la \X-\X^t,\G^t\ra + \tfrac{\theta \ell(\beta^t)}{2}\| \X - \X_{\cc}^t \|_{\fro}^2$, where $\ell(\beta^t) \triangleq \beta^t \Aup^2 + L_f$.
}

\If{{{\sf OADMM-RR}}}
{

Set $\G^t \in \nabla_{\X} \SS(\X^t,\y^t,\z^t)-\partial g(\X^t)$. Set $\GGG^t_{\rho} \triangleq \G^t-\rho\X^t [\G^t] \trans\X^t - (1-\rho)\X^t[\X^t]\trans \G^t$. Set $b^t\in (\underline{b},\overline{b})$ as the BB step size, where $\underline{b},\overline{b} \in(0,\infty)$. Set $\X^{t+1} = \Retr_{\X^{t}}( -\eta^t \GGG^t_{\rho})$. Here, $\eta^t\triangleq \tfrac{b^t \gamma^{j}}{\beta^t}$, and $j\in\{0,1,\ldots\}$ is the smallest integer such that: $\dot{\mathcal{L}}(\Retr_{\X^{t}}(-\eta^t \GGG^t_{\rho}) ) - \dot{\mathcal{L}}(\X^{t})\leq  - \delta \eta^t\|\GGG^t_{\rho}\|_{\fro}^2$, where $\dot{\mathcal{L}}(\X)\triangleq L(\X,\y^t,\z^t,\beta^t)$.
}

S3) Update the primal variable $\y$: $\y^{t+1} = \arg \min_{\y}  h_{\mu^t}(\y) + \frac{ \beta^t}{ 2  }\| \y - \mathbf{b} \|_{2}^2$, where $\mu^t =  \tau / \beta^t$, $\mathbf{b} \triangleq \y^t - \tfrac{1}{\beta^t}\nabla_{\y} \SS(\X^{t+1},\y^t,\z^t,\beta^t)$. It can be solved using Lemma \ref{lemma:smoothing:problem:prox} as: $\y^{t+1}= \tfrac{\breve{\y}^{t+1}+\mu^t\beta^t \b}{1+\mu^t\beta^t}$, where $\breve{\y}^{t+1} = \prox(\b;\mu^t+1/\beta^t)$.

S4) Update the dual variable $\z$: $\z^{t+1} = \z^t + \sigma \beta^t (\AA (\X^{t+1}) - \y^{t+1}) $
}
\end{algorithm}


We detail iteration steps of {\sf OADMM} in Algorithm \ref{alg:main}, and have the following remarks. (\bfit{i}) To achieve possible faster dual convergence, we apply an over-relaxation step size with $\sigma \in (1,2)$ for updating the dual variable $\z$, as suggested by previous studies \cite{goncalves2017convergence, yang2017alternating, li2016majorized, li2023convergence}. (\bfit{ii}) To accelerate primal convergence in {\sf OADMM-EP}, we incorporate a Nesterov extrapolation strategy with parameter $\alpha \in (0,1)$. (\bfit{iii}) To enhance primal convergence in {\sf OADMM-RR}, we use a Monotone Barzilai-Borwein (MBB) strategy \cite{wen2013feasible} with a dynamically adjusted parameter $b^t$ to capture the problem's curvature \footnote{Following \cite{wen2013feasible}, one can set $b^t = { \la \mathbf{S}^{t}, \mathbf{S}^t \ra } / { \la \mathbf{S}^{t}, \mathbf{Z}^t \ra }$ or $b^t = { \la \mathbf{S}^{t}, \mathbf{Z}^t \ra }/{ \la \mathbf{Z}^{t}, \mathbf{Z}^t \ra }$, where $\mathbf{S}^t = \X^t-\X^{t-1}$ and $\mathbf{Z}^t = \GGG_{1}^{t-1} - \GGG_{1}^t$, with $\GGG_{1}^t$ being the Riemannian gradient.}. The parameters $\{\gamma,\delta\}$ represent the decay rate and sufficient decrease parameter, commonly used in line search procedures \cite{chen2020proximal}. (\bfit{iv}) The $\X$-subproblem is solved as: $\X^{t+1} = \arg \min_{\X\in\MM}\|\X - \X'\|_{\fro}^2=\dot{\U}\dot{\V}\trans$, where $\X'=\X_{\cc}^t - \G^t/(\theta \ell(\beta^t))$, and $\dot{\U}\diag(\dot{\x})\dot{\V}\trans = \X'$ is the using singular value decomposition of $\X'$. (\bfit{v}) For practical implementation, we recommend the following default parameters: $p=1/3$, $\theta=1.01$, $\sigma=1.1$, $\rho=1$, $\gamma=1/2$, $\delta=10^{-3}$, $\xi=1$, $\alpha=\frac{\theta-1}{(\theta+1) (\xi+2)}-10^{-12}$.


\section{Oracle Complexity}
\label{sect:iterC}

This section details the oracle complexity of Algorithm \ref{alg:main}.

\textbf{Notations.} We define $\varepsilon_z=\tfrac{1}{4}$, $\varepsilon_{\beta}>0$, $\varepsilon_y\triangleq \tfrac{2-\sigma}{8}$, $\ddot{\sigma} \triangleq \tfrac{12\sigma^2}{p(2-\sigma)^2} C^2_h$, $c\triangleq \varepsilon_{\beta} + \tau C_h^2 + \tfrac{2}{\sigma}\ddot{\sigma}$. We define a sequence associated with the potential function (or Lyapunov function) for all $t\geq 1$, as follows:
\beq
\Theta^t&\triangleq& L(\X^{t},\y^t,\z^t,\beta^{t})+  c/{\beta^{t}} + \PPP^{t}+ \DDD^{t}  ,
\eeq
\noi where $\PPP^{t}\triangleq \tfrac{     \alpha ( \theta + 1   ) \ell(\beta^t)  }{2} \| \X^{t}  - \X^{t-1}\|_{\fro}^2$, $\DDD^t\triangleq 2\beta^{t-1} \tfrac{\sigma-1}{2-\sigma} \| \sigma (\AA(\X^t)-\y^t)\|_2^2$. We define $\mathcal{B}_t\triangleq\sqrt{ (\tfrac{1}{\beta^{t-1}} - \tfrac{1}{\beta^{t}})\tfrac{1}{\beta^{t-1}}}$, $\mathcal{Y}_t\triangleq\|\y^t - \y^{t-1}\|$, and $\mathcal{Z}_t\triangleq\|\AA(\X^{t}) - \y^t\|$. For {\sf OADMM-EP}, we define $\mathcal{X}_t\triangleq\|\X^t - \X^{t-1}\|_{\fro}$, and for {\sf OADMM-RR}, $\mathcal{X}_t\triangleq\|\tfrac{1}{\beta^t} \GGG^{t-1}_{1/2}\|_{\fro}$.


We have the following useful lemma, derived using the first-order optimality condition of $\mathbf{y}^{t+1}$.

\begin{lemma} \label{lemma:bounding:dual}
(Proof in Section \ref{app:lemma:bounding:dual}, \rm{Bounding Dual using Primal}) We have: (\bfit{a}) $\forall t\geq 0,\,\z^{t} - \tfrac{1}{\sigma} (\z^{t} - \z^{t+1}) =  \nabla h_{\mu^{t}} (\y^{t+1}) \in \partial h(\breve{\y}^{t+1})$. (\bfit{b})  $\forall t\geq 1,\,\| \z^{t+1}-\z^t\|_{2}^2 \leq \tfrac{\sigma-1}{2-\sigma} (\|\z^{t}-\z^{t-1}\|_2^2 - \|\z^{t+1}-\z^t\|_2^2) + \dot{\sigma}(\beta^t)^2 \|\y^{t+1}-\y^{t}\|_2^2 + \ddot{\sigma} (\tfrac{\beta^0}{\beta^t}-\tfrac{\beta^0}{\beta^{t+1}})$, where $\dot{\sigma}\triangleq \tfrac{2\sigma^2}{(2-\sigma)^2}  \tfrac{1}{\tau^2}$.

\end{lemma}

\begin{remark}
For {\sf OADMM-RR}, we set $\alpha=0$, resulting in $\PPP^{t}=0$ for all $t$. With the choice $\sigma=1$, we have: $\nabla h_{\mu^{t-1}} (\y^{t})=\z^{t}$, and $\|\z^{t+1}-\z^t\|\leq\|\nabla h_{\mu^{t}}(\y^{t+1})-\nabla h_{\mu^{t-1}}(\y^{t})\|$.
\end{remark}

\begin{lemma} \label{lemma:smooth:LL}

(Proof in Appendix \ref{app:lemma:smooth:LL}) (\bfit{a}) It holds that $\beta^{t+1}\leq \beta^t (1+\xi)$. (\bfit{b}) There exists constant $\{\ellup,\elldown\}$ such that $\beta^t\elldown\leq \ell(\beta^t) \leq \beta^t\ellup$.
\end{lemma}

The subsequent lemma demonstrates that the sequence $\{\Theta^t\}_{t=1}^{\infty}$ is always lower bounded.

\begin{lemma} \label{lemma:bound:solution}
(Proof in Section \ref{app:lemma:bound:solution}) For all $t\geq 1$, there exists constants $\{\overline{\rm{X}},\overline{\rm{z}},\overline{\rm{y}},\underline{\rm{\Theta}}\}$ such that $\|\X^t\|_{\fro}\leq \overline{\rm{X}}$, $\|\z^t\| \leq \overline{\rm{z}}$, $\|\y^t\|\leq \overline{\rm{y}}$, and $\Theta^t\geq \underline{\rm{\Theta}}$.
\end{lemma}

The following lemma is useful for our subsequent analysis, applicable to both {\sf OADMM-EP} and {\sf OADMM-RR}.

\begin{lemma}\label{lemma:dec:non:x}
(Proof in Appendix \ref{app:lemma:dec:non:x}, \rm{Sufficient Decrease for Variables} $\{\y,\z,\beta,\mu\})$ We have $\varepsilon_z \beta^t \ZZ_{t+1}^2 +\varepsilon_y \beta^t  \YY_{t+1}^2 + \varepsilon_{\beta} \beta^t \BB_{t+1}^2 + L(\X^{t+1},\y^{t+1},\z^{t+1},\beta^{t+1}) - L(\X^{t},\y^t,\z^t,\beta^{t})  + c/{\beta^{t+1}}-c/{\beta^{t}}+ \DDD^{t+1}-\DDD^t\leq \mathfrak{X}$, where $\mathfrak{X}\triangleq L(\X^{t+1},\y^t,\z^t,\beta^{t}) - L(\X^t,\y^t,\z^t,\beta^{t})$.
\end{lemma}

In the remaining content of this section, we provide separate analyses for {\sf OADMM-EP} and {\sf OADMM-RR}.

\subsection{Analysis for {\sf OADMM-EP}}

Using the optimality condition of $\mathbf{X}^{t+1}$, we derive the following lemma.

\begin{lemma}\label{lemma:dec:x:proj}

(Proof in Appendix \ref{app:lemma:dec:x:proj}, \rm{Sufficient Decrease for Variable} $\X$) We define $\varepsilon_x \triangleq \tfrac{1}{2} \varepsilon_x'\elldown$, where $\varepsilon_x'\triangleq  \theta-1 - \alpha (2+\xi) (1+ \theta)>0$. We have $\mathfrak{X} \leq - \varepsilon_x \beta^t \XX_{t+1}^2 +  \PPP^{t} - \PPP^{t+1} $.
\end{lemma}

Combining the results from Lemmas \ref{lemma:dec:non:x}, and \ref{lemma:dec:x:proj}, we arrive at the following lemma.

\begin{lemma} \label{lemma:bound:Gamma:Pj}
(Proof in Appendix \ref{app:lemma:bound:Gamma:Pj}) We define $\mathcal{X}_t\triangleq\|\X^t - \X^{t-1}\|_{\fro}$. We have:

\begin{enumerate}[label=\textbf{(\alph*)}, leftmargin=20pt, itemsep=1pt, topsep=1pt, parsep=0pt, partopsep=0pt]

\item  $\beta^t \{\varepsilon_{\beta} \BB_{t+1}^2 + \varepsilon_z \ZZ_{t+1}^2 + \varepsilon_y \YY_{t+1}^2 + \varepsilon_x \XX_{t+1}^2 \}   \leq \Theta^t - \Theta^{t+1}$.

    \item  $\tfrac{1}{T}\sum_{t=1}^T \beta^t [\BB_{t+1} + \ZZ_{t+1} + \YY_{t+1} + \XX_{t+1}]\leq \OO(T^{(p-1)/2})$.
\end{enumerate}
\end{lemma}

Finally, we have the following theorem regarding the oracle complexity of {\sf OADMM-EP}.

\begin{theorem} \label{theorem:OADMM:Pj}
(Proof in Appendix \ref{app:theorem:OADMM:Pj}) Let $p=1/3$. We have: $\frac{1}{T} \sum_{t=1}^T \Crit(\X^{t+1},\breve{\y}^{t+1},\z^{t+1}) \leq \OO(T^{(p-1)/2}) + \OO(T^{-p}) = \mathcal{O}(T^{-1/3})$. In other words, there exists $\bar{t}\leq T$ such that:
$\Crit(\X^{\bar{t}+1},\breve{\y}^{\bar{t}+1},\z^{\bar{t}+1})\leq \epsilon$, provided that $T\geq \mathcal{O}(1/\epsilon^3)$.
\end{theorem}

\begin{remark}
(\bfit{i}) We notice that $\frac{1}{T} \sum_{t=1}^T \Crit(\X^{t+1},\breve{\y}^{t+1},\z^{t+1}) \leq \OO(T^{(p-1)/2}) + \OO(T^{-p})$. Minimizing the worse-case complexity of the right-hand side of this inequality with respect to $p$ yields: $\arg \min_{p\in(0,1)} \max((p-1)/2,-p)=1/3$. Thus, setting $p=1/3$  achieves the optimal trade-off between the two terms, leading to the best complexity bounds. (\bfit{ii}) The oracle complexity of {\sf OADMM-EP} matches the best-known complexities currently available to date \cite{beck2023dynamic,BohmW21}.
\end{remark}

\subsection{Analysis for {\sf OADMM-RR}}

Using the properties of the line search procedure for updating the variable $\mathbf{X}^{t+1}$, we deduce the following lemma.

\begin{lemma}\label{lemma:dec:x}
(Proof in Appendix \ref{app:lemma:dec:x}, \rm{Sufficient Decrease for Variable} $\X$) We define $\varepsilon_x \triangleq  \delta \overline{\gamma} \gamma \underline{b} \min(1,2 \rho)^2 >0$, where $\overline{\gamma}\triangleq 2(1/{\max(1,2\rho)} -\delta) / (\ellup    \dot{k} \overline{b} + \overline{g} \ddot{k} \overline{b}/\beta^0)>0$. We have: (\bfit{a}) For any $t\geq 0$, if $j$ is large enough such that $\gamma^j \in (0, \overline{\gamma})$, then the condition of the line search procedure is satisfied. (\bfit{b}) It follows that: $L(\X^{t+1},\y^t,\z^t,\beta^t) - L(\X^t,\y^t,\z^t,\beta^t) \leq - \tfrac{\varepsilon_x}{\beta^t}\|\GGG^t_{1/2}\|^2_{\fro}$. Here, $\overline{g}$ is a constant that $\|\G^t\|_{\fro}\leq\overline{g}$, $\{\dot{k},\ddot{k}\}$ are defined in Lemma \ref{lemma:M12}, and $\{\rho,\gamma,\delta,\overline{b},\underline{b}\}$ are defined in Algorithm \ref{alg:main}.
\end{lemma}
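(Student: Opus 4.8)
The plan is to prove Lemma \ref{lemma:dec:x} in two parts, mirroring the structure of the standard backtracking/Armijo analysis for retraction-based methods but adapted to the augmented Lagrangian $\dot{\mathcal{L}}(\X) = L(\X,\y^t,\z^t;\beta^t,\mu^t)$ at a fixed iteration. For part (\bfit{a}), I would first expand $\dot{\mathcal{L}}(\Retr_{\X^t}(\eta^t \D^t_\rho)) - \dot{\mathcal{L}}(\X^t)$ using the fact that $\dot{\mathcal{L}}$ restricted to a neighborhood of $\MM$ splits into the smooth part $f(\X) + \la \z^t, \AA(\X)-\y^t\ra + \tfrac{\beta^t}{2}\|\AA(\X)-\y^t\|_2^2 = \mathcal{S}^t(\X,\y^t)$, whose gradient is $\ell(\beta^t)$-Lipschitz by Lemma \ref{lamma:smooth:LL}, the concave term $-g(\X)$ (handled via its $C_g$-Lipschitz subgradient, using convexity to get $-g(\X') \le -g(\X) - \la s, \X'-\X\ra$ for $s\in\partial g(\X)$), and the $\mu$-smooth Moreau term $h_{\mu^t}(\y^t)$ which is constant in $\X$. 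The key move is the descent inequality: writing $\Deltas = \eta^t \D^t_\rho$, $\|\Retr_{\X^t}(\Deltas) - \X^t\|_\fro \le k_1\|\Deltas\|_\fro$ and $\|\Retr_{\X^t}(\Deltas) - \X^t - \Deltas\|_\fro \le k_2\|\Deltas\|_\fro^2$ from Lemma \ref{lamma:M12}, so that
\[
\dot{\mathcal{L}}(\Retr_{\X^t}(\Deltas)) - \dot{\mathcal{L}}(\X^t) \le \la \G^t, \Retr_{\X^t}(\Deltas) - \X^t\ra + \tfrac{1}{2}\overline{\rm{L}}\beta^t k_1^2 \|\Deltas\|_\fro^2 + \overline{g}\, k_2 \|\Deltas\|_\fro^2,
\]
after bounding the first-order remainder of $\mathcal{S}^t$ and $-g$ along the retraction curve (the $\overline{g} = C_f + C_g + 4C_A\overline{\rm{z}}$ term absorbs the gradient norms of $f$, $g$, and the $\AA$-dependent terms on the manifold, via the boundedness Lemma \ref{lemma:bound:solution}). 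Then I substitute $\Retr_{\X^t}(\Deltas) - \X^t = \Deltas + O(\|\Deltas\|_\fro^2)$ to get $\la \G^t, \Retr_{\X^t}(\Deltas)-\X^t\ra \le \eta^t \la \G^t, \D^t_\rho\ra + \overline{g} k_2 \|\Deltas\|_\fro^2 \le -\tfrac{\eta^t}{\max(1,2\rho)}\|\D^t_\rho\|_\fro^2 + \overline{g} k_2(\eta^t)^2\|\D^t_\rho\|_\fro^2$ using Lemma \ref{lemma:GD:bound}(\bfit{a}); collecting the $(\eta^t)^2$ terms and dividing through shows the Armijo condition $\le -\delta\eta^t\|\D^t_\rho\|_\fro^2$ holds whenever $\eta^t \le \overline{\gamma}\cdot\tfrac{\beta^0}{\beta^t}\cdot(\text{something})$, which, since $\eta^t = \tfrac{\beta^0}{\beta^t} b^t \gamma^j$ and $b^t \le \overline{b}$, reduces to $\gamma^j \le \overline{\gamma}$ with $\overline{\gamma} = (\tfrac12-\delta)/(\overline{g}k_2\overline{b} + \tfrac12\overline{\rm{L}}k_1\overline{b})$ — I should double check whether the $k_1$ in the paper's $\overline{\gamma}$ should really be $k_1^2$, and reconcile the constants; this is the kind of bookkeeping detail where the paper's statement should be trusted but verified.

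For part (\bfit{b}), I would use the guarantee from (\bfit{a}) that the backtracking terminates with $\gamma^j \ge \gamma\overline{\gamma}$ (since $\gamma^{j-1} > \overline{\gamma}$ would have already worked, so $\gamma^j > \gamma\overline{\gamma}$), hence $\eta^t = \tfrac{\beta^0}{\beta^t}b^t\gamma^j \ge \tfrac{\beta^0}{\beta^t}\underline{b}\gamma\overline{\gamma}$. Plugging into the accepted Armijo inequality $\dot{\mathcal{L}}(\X^{t+1}) - \dot{\mathcal{L}}(\X^t) \le -\delta\eta^t\|\D^t_\rho\|_\fro^2 \le -\delta\gamma\overline{\gamma}\underline{b}\cdot\tfrac{\beta^0}{\beta^t}\|\D^t_\rho\|_\fro^2$, and then converting $\|\D^t_\rho\|_\fro$ to $\|\D^t_{1/2}\|_\fro$ via Lemma \ref{lemma:GD:bound}(\bfit{b}), which gives $\|\D^t_\rho\|_\fro^2 \ge \min(1,2\rho)^2\|\D^t_{1/2}\|_\fro^2$, yields exactly $L(\X^{t+1},\y^t,\z^t;\beta^t,\mu^t) - L(\X^t,\y^t,\z^t;\beta^t,\mu^t) \le -\tfrac{\varepsilon_x}{\beta^t}\|\D^t_{1/2}\|_\fro^2$ with $\varepsilon_x = \min(1,2\rho)^2\delta\beta^0\overline{\gamma}\gamma\underline{b}$. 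Note that $\dot{\mathcal{L}}(\X)$ is precisely $L(\X,\y^t,\z^t;\beta^t,\mu^t)$, so this is the claimed sufficient-decrease statement directly.

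The main obstacle I anticipate is making the descent inequality along the retraction curve fully rigorous for the \emph{nonconvex, nonsmooth} objective: $\mathcal{S}^t$ is smooth so the quadratic upper bound applies cleanly, but the $-g(\X)$ term is only concave-Lipschitz, so I must use the subgradient inequality $-g(\Retr_{\X^t}(\Deltas)) \le -g(\X^t) + \la -s^t, \Retr_{\X^t}(\Deltas)-\X^t\ra$ for some $s^t\in\partial g(\X^t)$ and then bound $|\la s^t, \Retr_{\X^t}(\Deltas)-\X^t - \Deltas\ra| \le C_g k_2\|\Deltas\|_\fro^2$; combining this with the corresponding first-order term of $\mathcal{S}^t$ requires that $\G^t = \nabla_\X\mathcal{S}^t(\X^t,\y^t) - s^t$ be exactly the object appearing in the Armijo rule (the algorithm uses $\G^t \in \nabla_\X\mathcal{S}^t(\X_\cc^t,\y^t) - \partial g(\X^t)$, and for {\sf OADMM-RR} $\alpha_x=0$ so $\X_\cc^t = \X^t$, which is consistent). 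A secondary subtlety is controlling the various gradient-norm constants uniformly in $t$ — this is where Lemmas \ref{lemma:bound:solution} and \ref{lamma:smooth:LL} are essential, since $\|\nabla_\X\mathcal{S}^t(\X^t,\y^t)\|_\fro$ grows with $\beta^t$, and I must be careful that the $\beta^t$-dependence is tracked correctly so that the final bound carries the clean $\tfrac{1}{\beta^t}$ factor (which ultimately comes from the $\tfrac{\beta^0}{\beta^t}$ in the definition of $\eta^t$). Once these are handled, the rest is routine substitution.
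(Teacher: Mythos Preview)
Your proposal is correct and follows essentially the same route as the paper's proof. One clarification on your ``secondary subtlety'': $\|\G^t\|_{\fro}$ does \emph{not} grow with $\beta^t$. The paper's very first step is to establish the uniform bound $\|\G^t\|_{\fro}\le \overline{g}$, and the key trick is to rewrite the apparently dangerous term $\beta^t(\AA(\X^t)-\y^t)$ via the dual update $\z^t-\z^{t-1}=\sigma\beta^{t-1}(\AA(\X^t)-\y^t)$, so that $\beta^t(\AA(\X^t)-\y^t)=\tfrac{\beta^t}{\sigma\beta^{t-1}}(\z^t-\z^{t-1})$, which is bounded by $\tfrac{1+\xi}{\sigma}\cdot 2\overline{\rm{z}}$ using Lemma~\ref{lemma:bound:solution} and $\beta^t\le(1+\xi)\beta^{t-1}$. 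This is precisely what makes $\overline{g}=C_f+C_g+4C_A\overline{\rm{z}}$ independent of $t$, and once it is in hand your descent chain goes through cleanly: the paper writes the single descent-lemma bound $\dot{\mathcal{L}}(\X^{t+1})-\dot{\mathcal{L}}(\X^t)\le \la \G^t,\X^{t+1}-\X^t\ra+\tfrac{\ell(\beta^t)}{2}\|\X^{t+1}-\X^t\|_{\fro}^2$ (smoothness of $\mathcal{S}^t$ plus convexity of $g$), then splits $\la \G^t,\Retr_{\X^t}(\eta^t\D^t_\rho)-\X^t\ra = \la \G^t,\Retr_{\X^t}(\eta^t\D^t_\rho)-\X^t-\eta^t\D^t_\rho\ra+\eta^t\la \G^t,\D^t_\rho\ra$, bounding the first piece by $\overline{g}\,k_2\|\eta^t\D^t_\rho\|_{\fro}^2$ and the second by Lemma~\ref{lemma:GD:bound}(\bfit{a}). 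Your instinct about $k_1$ versus $k_1^2$ is on target: squaring the first retraction bound in Lemma~\ref{lamma:M12} indeed produces $k_1^2$, so treat the paper's $k_1$ in $\overline{\gamma}$ as shorthand and carry $k_1^2$ in your own derivation. For part~(\bfit{b}) your argument is exactly the paper's: backtracking termination forces $\gamma^{j-1}\ge\overline{\gamma}$, hence $\eta^t\ge \tfrac{\beta^0}{\beta^t}\underline{b}\,\gamma\overline{\gamma}$, and Lemma~\ref{lemma:GD:bound}(\bfit{b}) converts $\|\D^t_\rho\|_{\fro}^2$ to $\min(1,2\rho)^2\|\D^t_{1/2}\|_{\fro}^2$.
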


\begin{remark}

By Lemma \ref{lemma:dec:x}(\bfit{a}), since $\overline{\gamma}$ is a universal constant and $\gamma^j$ decreases exponentially, the line search procedure of {\sf OADMM-RR} will terminate in $\log(\overline{\gamma}) / \log(\gamma) +1 = \mathcal{O}(1)$ time.

\end{remark}

Combining the results from Lemmas \ref{lemma:dec:non:x}, and \ref{lemma:dec:x}, we obtain the following lemma.

\begin{lemma} \label{lemma:bound:Gamma}
(Proof in Appendix \ref{app:lemma:bound:Gamma}) We define $\mathcal{X}_t\triangleq\|\tfrac{1}{\beta^t} \GGG^{t-1}_{1/2}\|_{\fro}$. We have:

\begin{enumerate}[label=\textbf{(\alph*)}, leftmargin=20pt, itemsep=1pt, topsep=1pt, parsep=0pt, partopsep=0pt]

\item $\beta^t \{\varepsilon_{\beta} \BB_{t+1}^2 + \varepsilon_z \ZZ_{t+1}^2 + \varepsilon_y \YY_{t+1}^2 + \varepsilon_x \XX_{t+1}^2 \} \leq \Theta^t - \Theta^{t+1}$.

\item $\tfrac{1}{T}\sum_{t=1}^T \beta^t [\BB_{t+1} + \ZZ_{t+1} + \YY_{t+1} + \XX_{t+1}] \leq \OO(T^{(p-1)/2})$.

\end{enumerate}

\end{lemma}

Finally, we derive the following theorem on the oracle complexity of {\sf OADMM-RR}.

\begin{theorem}\label{theorem:OADMM:R}
(Proof in Appendix \ref{app:theorem:OADMM:R}) Let $p=1/3$. We have: $\frac{1}{T} \sum_{t=1}^T \Crit(\X^{t+1},\breve{\y}^{t+1},\z^{t+1}) \leq \OO(T^{(p-1)/2}) + \OO(T^{-p}) = \OO(T^{-1/3})$. In other words, there exists $\bar{t}\leq T$ such that:
$\Crit(\X^{\bar{t}+1},\breve{\y}^{\bar{t}+1},\z^{\bar{t}+1})\leq \epsilon$, provided that $T\geq \mathcal{O}(1/\epsilon^3)$.

\end{theorem}

\begin{remark}
Theorem \ref{theorem:OADMM:R} mirrors Theorem \ref{theorem:OADMM:Pj}, and {\sf OADMM-RR} shares the same oracle complexity as {\sf OADMM-EP}.
\end{remark}





\section{Convergence Rate}
\label{sect:strong:KL}

This section provides convergence rate of {\sf OADMM-EP} and {\sf OADMM-RR}. Our analyses are based on a non-convex analysis tool called KL inequality \cite{Attouch2010,Bolte2014,li2015accelerated,li2023convergence}.

For simplicity, we only consider the case where $\alpha=0$, $\sigma=1$, and $g(\X)=0$. We only focus on $p=1/3$, as it gives the best oracle complexity.

We define the Lyapunov function as: $\Theta(\X,\y,\z,\beta) \triangleq  L(\X,\y,\z,\beta)  + c/{\beta}$. We define $\ww \triangleq \{\X,\y,\z,\beta\}$, $\ww^t \triangleq \{\X^t,\y^t,\z^t,\beta^t\}$. Consequently, $\Theta^{t} = \Theta(\ww^t)$. We denote $\ww^{\infty}$ as a limiting point of Algorithm \ref{alg:main}. We make the following additional assumption.

\begin{assumption} \label{assumption:KL}
The function $\Theta(\ww)$ is a KL function \textit{w.r.t.} $\ww$.

\end{assumption}

\begin{proposition} \label{proposition:KL}
({\rm{Kurdyka-{\L}ojasiewicz Inequality}} \cite{Attouch2010}). Consider a semi-algebraic function $\Theta(\ww)$ with $\ww \in \dom (\Theta)$. There exist constants $\tilde{\eta}\in(0,+\infty)$, $\tilde{\sigma}\in[0,1)$, a neighborhood $\Upsilon$ of $\ww^{\infty}$, and a continuous and concave desingularization function $\varphi(s)\triangleq \tilde{c} s ^{1-\tilde{\sigma}}$ with $\tilde{c}>0$ and $s\in [0,\tilde{\eta})$ such that, for all $\ww\in \Upsilon$ satisfying $\Theta(\ww) - \Theta(\ww^{\infty}) \in (0,\tilde{\eta})$, it holds that: $\varphi' ( \Theta(\ww) - \Theta(\ww^{\infty}) ) \cdot \dist(\zero,\partial \Theta(\ww))\geq 1$.

\end{proposition}

\begin{remark}
Semi-algebraic functions, including real polynomial functions, finite combinations, and indicator functions of semi-algebraic sets, commonly exhibit the KL property and find extensive use in applications \cite{Attouch2010}.

\end{remark}

We present the following lemma regarding subgradient bounds for each iteration.

\begin{lemma} \label{lemma:subgrad:bound:P}
(Proof in Section \ref{app:lemma:subgrad:bound:P}, \rm{Subgradient Bounds}) For both {\sf OADMM-EP} and {\sf OADMM-RR}, there exists a constant $K>0$ such that: $\dist(\zero,\partial \Theta(\ww^t)) \leq K \beta^t  (\BB_t + \XX_t + \YY_t+ \ZZ_t)$.
\end{lemma}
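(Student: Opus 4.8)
The plan is to bound the distance from zero to each block of the subdifferential $\partial \Theta(\ww^t;\uu^t)$ separately — namely $\partial_{\X}$, $\partial_{\X^-}$, $\partial_{\y}$, and $\partial_{\z}$ — and then combine the four estimates. For each block I would write out the relevant partial subdifferential of $\Theta(\ww;\uu) = L(\X,\y,\z;\beta,\mu) + v_1\tfrac{(\beta^-)^2}{\beta}\|\AA(\X)-\y\|_2^2 + v_2\ell(\beta)\|\X-\X^-\|_{\fro}^2 + \mu C_g^2 + K_\mu t^{-1}$, exploiting the fact that $h_{\mu^t}$ is smooth (Lemma~\ref{lemma:well:knwon:3}), $g$ is smooth (Assumption~\ref{ass:g:smo}), and $f$, $\AA$ are smooth. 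The indicator $\mathcal{I}_{\MM}(\X)$ contributes the normal cone $N_{\MM}(\X^t)$ to the $\X$-block, so I will use the first-order optimality condition of the $\X$-subproblem (the projection step for {\sf OADMM-EP}, or the line-search/retraction analysis for {\sf OADMM-RR}) to produce an explicit element of that normal cone and cancel the dominant terms.

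The key steps, in order: (1) For $\partial_{\z}\Theta$: this is simply $\AA(\X^t)-\y^t$, so $\dist(\zero,\partial_{\z}\Theta(\ww^t;\uu^t)) = \|\AA(\X^t)-\y^t\| \leq e_t \leq \beta^t K_1 e_t$ after absorbing constants (here using $\beta^t \geq \beta^0 \geq 1$ WLOG). (2) For $\partial_{\X^-}\Theta$: only the two auxiliary penalty terms depend on $\X^-$, giving a gradient proportional to $\tfrac{(\beta^{t-1})^2}{\beta^t}(\AA(\X^t)-\y^t)$-type contributions and $\ell(\beta^t)\|\X^t-\X^{t-1}\|_{\fro}$; using $\beta^{t-1}\leq\beta^t$, $\ell(\beta^t)\leq\beta^t\overline{\rm L}$, and the definitions of $e_t$, this is bounded by $\beta^t K_1 e_t$. (3) For $\partial_{\y}\Theta$: combine $\nabla h_{\mu^t}(\y^t) - \z^t - \beta^t(\AA(\X^t)-\y^t)$ from $L$ with the term from the $v_1$ penalty; here I would invoke Claim~(\bfit{a}) of Lemma~\ref{lemma:bounding:dual} to rewrite $\z^{t-1}-\nabla h_{\mu^{t-1}}(\y^t)$ in terms of dual increments, then use Lemma~\ref{lemma:bounding:dual}(d) and the Lipschitz bounds on $\nabla h_\mu$ (Lemma~\ref{lamma:lip:mu}, Lemma~\ref{lemma:well:knwon:3}) together with $\mu^t\propto 1/\beta^t$ to express everything via $\|\y^{t+1}-\y^t\|$, $\|\y^t-\y^{t-1}\|$, $\|\AA(\X^{t+1})-\y^{t+1}\|$, $\|\AA(\X^t)-\y^t\|$, i.e.\ via $e_{t+1}$ and $e_t$. (4) For $\partial_{\X}\Theta$: this is the main case. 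I would take the optimality condition of the $\X$-update — for {\sf OADMM-EP}, $\zero \in \G^t + \theta\ell(\beta^t)(\X^{t+1}-\X_{\cc}^t) + N_{\MM}(\X^{t+1})$ (shifted to iteration $t$), and for {\sf OADMM-RR} the corresponding Riemannian stationarity of the retraction step via Lemma~\ref{lemma:GD:bound} and Lemma~\ref{lemma:subgradient:R:bound} — and subtract it from $\partial_{\X}\Theta(\ww^t;\uu^t)$. The dominant $\beta^t$-order terms (the augmented-Lagrangian gradient $\beta^t[\nabla\AA(\X^t)]\trans(\AA(\X^t)-\y^t)$ and the penalty gradients) will either cancel or collapse to terms of the form $\beta^t \cdot (\text{constant}) \cdot (\|\X^{t+1}-\X^t\|_{\fro} + \|\X^t-\X^{t-1}\|_{\fro} + \|\AA(\X^t)-\y^t\| + \ldots)$, using boundedness of the iterates (Lemma~\ref{lemma:bound:solution}), Lipschitz smoothness of $f,\AA$, smoothness of $g$, $\ell(\beta^t)\leq\beta^t\overline{\rm L}$, and — crucially — the fact that $\X^{t+1}-\X^t$ is controlled by $\|\D^t_{1/2}\|_{\fro}$ in the {\sf OADMM-RR} case via Lemma~\ref{lamma:M12} and the step-size bounds. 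Finally, I would sum the squared block bounds, pull out a common factor $\beta^t K_1$, and use $(a+b+c+d)^2 \le 4(a^2+b^2+c^2+d^2)$ (or simply the triangle inequality on the square root) to get $\dist(\zero,\partial\Theta(\ww^t;\uu^t)) \leq \beta^t K_1(e_{t+1}+e_t+e_{t-1})$, absorbing all problem constants into $K_1$.

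The main obstacle is step (4), the $\X$-block — specifically handling the normal-cone term cleanly. The subdifferential $\partial_{\X}\Theta$ contains $N_{\MM}(\X^t)$, but the natural element of $N_{\MM}(\X^t)$ provided by the $\X$-update optimality condition is the one at iteration $t-1$ (since $\X^t$ was produced by the update at step $t-1$ using $\G^{t-1}$, $\X_{\cc}^{t-1}$, $\beta^{t-1}$). So I must carefully re-index: use the optimality of $\X^t$ as the minimizer of the step-$(t-1)$ subproblem to get $-\G^{t-1} - \theta\ell(\beta^{t-1})(\X^t - \X_{\cc}^{t-1}) \in N_{\MM}(\X^t)$, then bound the mismatch between this and the $\X$-dependent part of $\partial_{\X}\Theta(\ww^t;\uu^t)$, which is where the increments $\X^t-\X^{t-1}$ and $\X^{t-1}-\X^{t-2}$ (hence the appearance of $e_{t-1}$) and the change $\beta^{t-1}\to\beta^t$, $\z^{t-1}\to\z^t$ enter. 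For {\sf OADMM-RR} the analogous care is needed with the retraction: the Riemannian optimality is only approximate (it is a descent step, not an exact minimization), so I would instead bound $\dist(\zero, \nabla_{\X}L + N_{\MM}(\X^t))$ directly via $\|\D^{t-1}_{1/2}\|_{\fro}$-type quantities using Lemma~\ref{lemma:subgradient:R:bound} and Lemma~\ref{lemma:GD:bound}(b,d), which is exactly why $e_t$ for {\sf OADMM-RR} is defined with the $\|\tfrac{1}{\beta^t}\D^{t-1}_{1/2}\|_{\fro}$ term rather than $\|\X^t - \X^{t-1}\|_{\fro}$. Once the bookkeeping of indices and the $\beta^t$-scaling is organized, the remaining estimates are routine applications of the Lipschitz and boundedness lemmas already established.
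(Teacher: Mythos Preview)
Your proposal is correct and follows essentially the same approach as the paper: compute each block $(\d_{\X},\d_{\X^-},\d_{\y},\d_{\z})$ of $\partial\Theta(\ww^t;\uu^t)$ explicitly, use the optimality condition of $\X^t$ from the step-$(t{-}1)$ subproblem (for {\sf OADMM-EP}) or Lemma~\ref{lemma:subgradient:R:bound} (for {\sf OADMM-RR}) to handle the normal-cone term, and then bound everything by $\beta^t$ times successive-difference quantities. One small inaccuracy: only the $v_2\ell(\beta)\|\X-\X^-\|_{\fro}^2$ term depends on $\X^-$ (the $v_1$ term involves $\beta^-$, a component of $\uu$, not $\X^-$), so $\d_{\X^-}=-2v_2\ell(\beta^t)(\X^t-\X^{t-1})$ alone --- this only simplifies your step~(2).
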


\begin{remark}
Lemma \ref{lemma:subgrad:bound:P} significantly differs from prior work that used a constant penalty due to the crucial role played by the increasing penalty.
\end{remark}



The following theorem establishes a finite length property of {\sf OADMM}.

\begin{theorem} \label{theorem:finite:length:P}
(Proof in Section \ref{app:theorem:finite:length:P}, {\rm{A Finite Length Property}}) We define $d^t\triangleq\sum_{i=t}^{\infty} e^{i+1}$, where $e^{t}\triangleq \BB_t + \XX_t + \YY_t+ \ZZ_t$. We define $\varphi^t \triangleq \varphi( \Theta(\ww^{t}) -  \Theta(\ww^{\infty}))$, where $\varphi(\cdot)$ is the desingularization function defined in Assumption \ref{assumption:KL}. We have the following results for both {\sf OADMM-EP} and {\sf OADMM-RR}.

\begin{enumerate}[label=\textbf{(\alph*)}, leftmargin=20pt, itemsep=1pt, topsep=1pt, parsep=0pt, partopsep=0pt]


\item $(e^{t+1})^2 \leq  (\varphi^t - \varphi^{t+1}) K' e^t $, where $K' = \tfrac{ 4K }{\min(\varepsilon_z,\varepsilon_y, \varepsilon_x, \varepsilon_{\beta} ) }$, and $K$ is defined in Lemma \ref{lemma:subgrad:bound:P}.

\item It holds that $\forall t\geq 1,\,d^t\leq e^{t} + 2 K' \varphi^t$. The sequence $\{\ww^t\}_{t=1}^{\infty}$ has the finite length property that $d^t \leq e^1  + 2 K' \varphi^1 < +\infty$.

\end{enumerate}

\end{theorem}

\begin{remark} The finite length property in Theorem \ref{theorem:finite:length:P} represents much stronger convergence results compared to those outlined in Theorems \ref{theorem:OADMM:Pj} and \ref{theorem:OADMM:R}.
\end{remark}

We prove a lemma demonstrating that the convergence of $d^t \triangleq \sum_{i=t}^{\infty} e^{i+1}$ is sufficient to establish the convergence of $\|\X^{t} - \X^\infty\|_{\fro}$.

\begin{lemma} \label{lemma:rate:pre}
(Proof in Section \ref{app:lemma:rate:pre}) For both {\sf OADMM-EP} and {\sf OADMM-RR}, we have:

\begin{enumerate}[label=\textbf{(\alph*)}, leftmargin=20pt, itemsep=1pt, topsep=1pt, parsep=0pt, partopsep=0pt]


\item There exists a constant $\varpi$ such that $\|\X^{t} - \X^\infty\|_{\fro} \leq \varpi d^t$.

\item
We have $d^{t} \leq \ts d^{t-1} - d^{t} + \nu [ \beta^t (d^{t-1} - d^{t}) ]^{\frac{1-\tilde{\sigma}}{\tilde{\sigma}}}$, where $\nu$ is some universal constant.

\end{enumerate}
\end{lemma}

Finally, we establish the convergence rate of {\sf OADMM} with exploiting the KL exponent $\tilde{\sigma}$.

\clearpage
\begin{theorem}\label{theorem:KL:rate:Exponent:P}

(Proof in Section \ref{app:theorem:KL:rate:Exponent:P}, {\rm{Convergence Rate}}) We fix $p=1/3$. We have:
\begin{enumerate}[label=\textbf{(\alph*)}, leftmargin=20pt, itemsep=1pt, topsep=1pt, parsep=0pt, partopsep=0pt]



%

\item If $\tilde{\sigma}\in(0,\tfrac{1}{2}]$, then we have: $\|\X^{t} - \X^\infty\|_{\fro}\leq \mathcal{O}(t^{-\zeta})$, where $\zeta = \tfrac{2}{3}\cdot \frac{1-\tilde{\sigma}}{\tilde{\sigma}} \in [\tfrac{2}{3},\infty]$.

\item If $\tilde{\sigma}\in (\tfrac{1}{2},1)$, then we have: $\|\X^{t} - \X^\infty\|_{\fro}\leq  \mathcal{O}(t^{-\zeta})$, where $\zeta = \tfrac{2}{3}\cdot \tfrac{1-\tilde{\sigma}}{2\tilde{\sigma}-1}\in (0,\infty)$.

\item If $\tilde{\sigma}\in(\tfrac{1}{4},\tfrac{1}{2}]$, then we have $\|\X^{t} - \X^\infty\|_{\fro}\leq \mathcal{O}(1/\exp(t^{\zeta}))$, where $\zeta =1- \tfrac{ p(1-\tilde{\sigma})}{\tilde{\sigma}}  \in (0,\tfrac{2}{3}]$.



\end{enumerate}

\end{theorem}

\begin{remark}

(\bfit{i}) To the best of our knowledge, Theorem \ref{theorem:KL:rate:Exponent:P} represents the first non-ergodic convergence rate for solving this class of nonconvex and nonsmooth problem in Problem (\ref{eq:main}). It is worth noting that the work of \cite{li2023convergence} establishes a non-ergodic convergence rate for subgradient methods with diminishing stepsizes by further exploring the KL exponent. (\bfit{ii}) Under the KL inequality assumption, with the desingularizing function chosen in the form of $\varphi(s)\triangleq \tilde{c} s ^{1-\tilde{\sigma}}$ with $\tilde{\sigma}\in(0,1)$, {\sf OADMM} converges with a polynomial convergence rate when $\tilde{\sigma}\in(0,1)$, and converges with a super-exponential rate when $\tilde{\sigma}\in(\tfrac{1}{4},\tfrac{1}{2}]$ for the gap $\|\X^{t} - \X^\infty\|_{\fro}$. (\bfit{iii}) Our result generalizes the classical findings of \cite{Attouch2010,Bolte2014}, which characterize the convergence rate of proximal gradient methods for a specific class of nonconvex composite optimization problems.

\end{remark}



 \section{Applications and Numerical Experiments}


In this section, we assess the effectiveness of the proposed algorithm {\sf OADMM} on the sparse PCA problem by comparing it against existing non-convex, non-smooth optimization algorithms.

\noi $\blacktriangleright$ \textbf{Application to Sparse PCA}. Sparse PCA is a method to produce modified principal components with sparse loadings, which helps reduce model complexity and increase model interpretation \cite{chen2016augmented}. It can be formulated as:
\beq
\min_{\X \in \Rn^{n\times r}}  \tfrac{1}{2\dot{m}} \| \X\X\trans \D - \D \|_{\fro}^2 + \dot{\rho} (\|\X\|_1 -  \|\X\|_{[k]}),~s.t.~\X\trans\X=\I_r,\label{eq:SPCA:DC}
\eeq
where $\D \in \Rn^{n\times \dot{m}}$ is the data matrix, $\dot{m}$ is the number of data points, and $\|\X\|_{[k]}$ is the $\ell_1$ norm the the $k$ largest (in magnitude) elements of the matrix $\X$. Here, we consider the DC $\ell_1$-largest-$k$ function \cite{GotohTT18} to induce sparsity in the solution. One advantage of this model is that when $\dot{\rho}$ is sufficient large, we have $\|\X\|_1 \thickapprox  \|\X\|_{[k]}$, leading to a $k$-sparsity solution $\X$. Problem (\ref{eq:SPCA:DC}) coincides with the optimization model in Problem (\ref{eq:main}), where $f(\X)=\tfrac{1}{2\dot{m}} \| \X\X\trans \D - \D \|_{\fro}^2$, $f(\X)=\dot{\rho} \|\X\|_{[k]}$, and $h(\AA(\X)) =\dot{\rho} \|\X\|_1$.

\noi $\blacktriangleright$ \textbf{Compared Methods}. We compare {\sf OADMM-EP} and {\sf OADMM-RR} against four state-of-the-art optimization algorithms: (\bfit{i}) {\sf RADMM}: ADMM using Riemannian retraction with fixed and small stepsizes \cite{li2022riemannian}, tested with two different penalty parameters $\forall t,\,\beta^t\in\{100,10000\}$, leading to two variants: RADMM-I and RADMM-II. (\bfit{ii}) {\sf SPGM-EP}: Smoothing Proximal Gradient Method using Euclidean projection \cite{BohmW21}. (\bfit{iii}) {\sf SPGM-EP}: SPGM utilizing Riemannian retraction \cite{beck2023dynamic}. (\bfit{iv}) {\sf Sub-Grad}: Subgradient methods with Euclidean projection \cite{davis2019stochastic,li2021weakly}. 

\noi $\blacktriangleright$ \textbf{Experiment Settings}. All methods are implemented in MATLAB on an Intel 2.6 GHz CPU with 64 GB RAM. For all retraction-based methods, we use only polar decomposition-based retraction. We evaluate different regularization parameters $\dot{\rho} \in \{10, 50, 100, 500, 1000\}$. For {\sf OADMM}, default parameters are used, with $\beta^0 = 10 \dot{\rho}$ and corresponding values $\xi = 0.5$ for each $\dot{\rho}$. For simplicity, we omit the Barzilai-Borwein strategy and instead use a fixed constant $b^t=1$ for all iterations. All algorithms start with a common initial solution $\x^0$, generated from a standard normal distribution. Our code for reproducing the experiments is available in the \textbf{supplemental material}.

\noi $\blacktriangleright$ \textbf{Experiment Results}. We report the objective values for different methods with varying parameters $\dot{\rho}$. The experimental results presented in Figures \ref{fig:L0PCA:rho:50} and \ref{fig:L0PCA:rho:500} reveal the following insights: (\bfit{i}) {\sf Sub-Grad} essentially fails to solve this problem, as the subgradient is inaccurately estimated when the solution is sparse. (\bfit{ii}) {\sf SPGM-EP} and {\sf SPGM-RR}, which rely on a variable smoothing strategy, exhibit slower performance than the multiplier-based variable splitting method. This observation aligns with the commonly accepted notion that primal-dual methods are generally more robust and faster than primal-only methods. (\bfit{iii}) The proposed {\sf OADMM-EP} and {\sf OADMM-RR} demonstrate similar results and generally achieve lower objective function values than the other methods.

\section{Conclusions}

This paper introduces {\sf OADMM}, an Alternating Direction Method of Multipliers (ADMM) tailored for solving structured nonsmooth composite optimization problems under orthogonality constraints. {\sf OADMM} integrates either a Nesterov extrapolation strategy or a Monotone Barzilai-Borwein (MBB) stepsize strategy to potentially accelerate primal convergence, complemented by an over-relaxation stepsize strategy for rapid dual convergence. We adjust the penalty and smoothing parameters at a controlled rate. Additionally, we develop a novel Lyapunov function to rigorously analyze the oracle complexity of {\sf OADMM} and establish the first non-ergodic convergence rate for this method. Finally, numerical experiments show that our {\sf OADMM} achieves state-of-the-art performance.

\begin{figure}[!t]
\centering

\centering
\begin{subfigure}{.24\textwidth}\centering\includegraphics[width=1.12\linewidth]{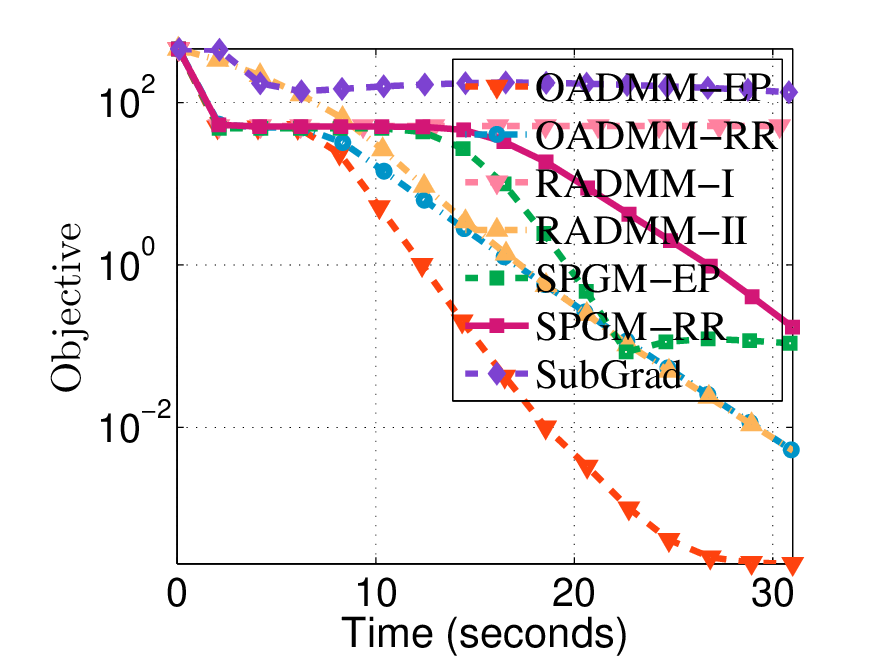}\caption{\scriptsize mnist-1500-500}\label{fig:sub1}\end{subfigure}
\begin{subfigure}{.24\textwidth}\centering\includegraphics[width=1.12\linewidth]{fig//demo_L0PCA50_12.eps}\caption{\scriptsize mnist-2500-500}\label{fig:sub2}\end{subfigure}
\begin{subfigure}{.24\textwidth}\centering\includegraphics[width=1.12\linewidth]{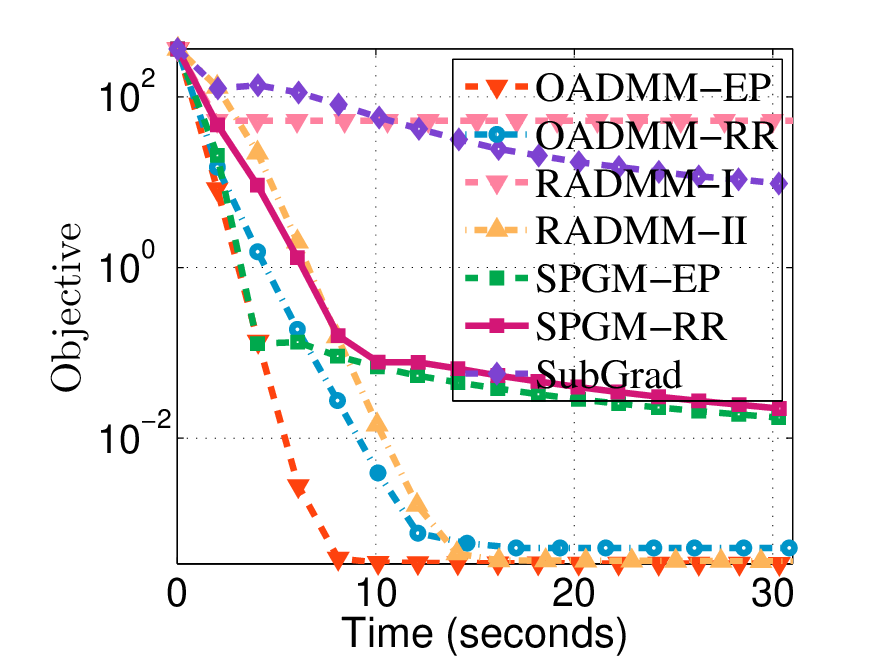}\caption{\scriptsize TDT2-1500-500}\label{fig:sub3}\end{subfigure}
\begin{subfigure}{.24\textwidth}\centering\includegraphics[width=1.12\linewidth]{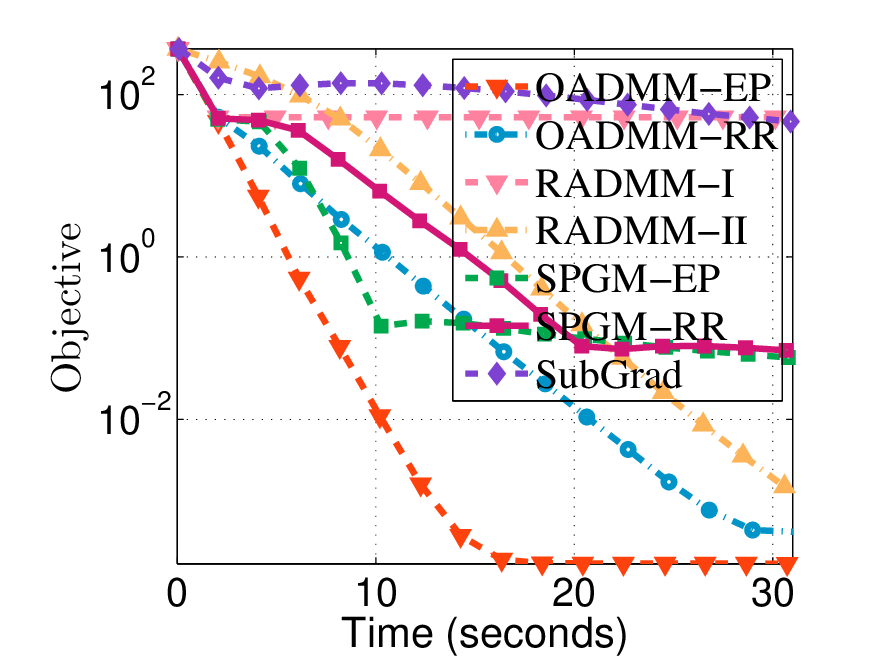}\caption{\scriptsize TDT2-2500-500}\label{fig:sub4}\end{subfigure}

\begin{subfigure}{.24\textwidth}\centering\includegraphics[width=1.12\linewidth]{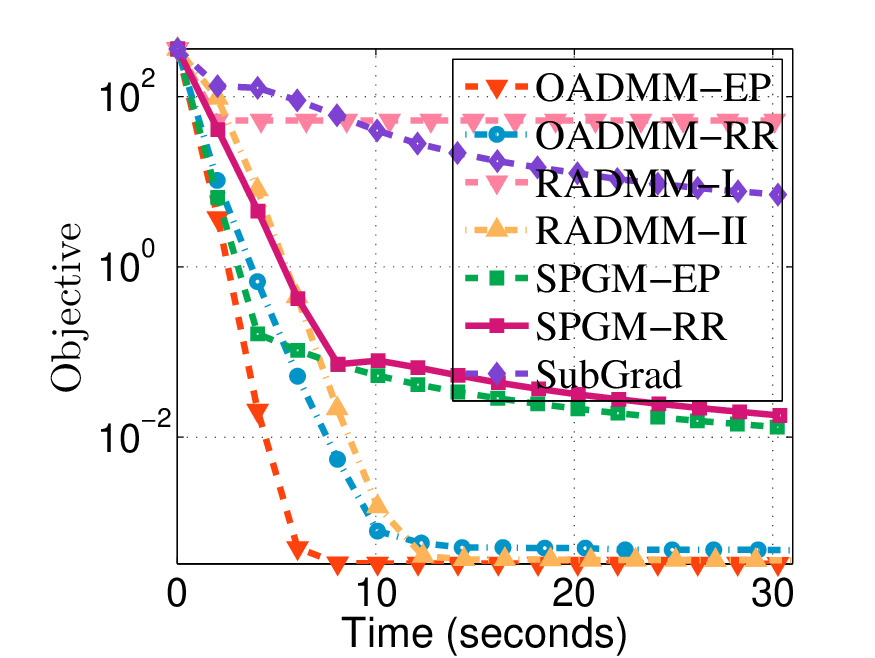}\caption{\scriptsize sector-1500-500}\label{fig:sub1}\end{subfigure}
\begin{subfigure}{.24\textwidth}\centering\includegraphics[width=1.12\linewidth]{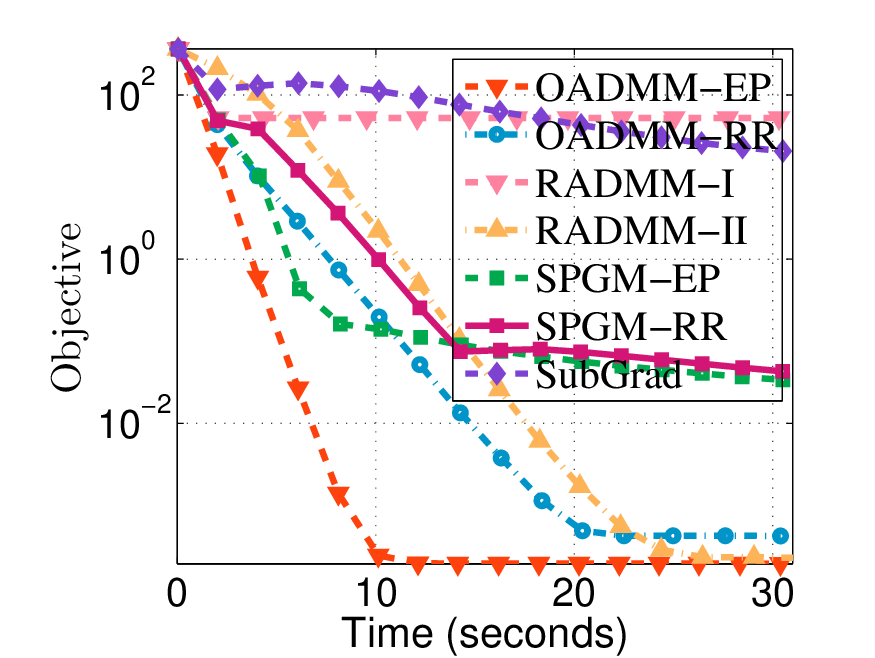}\caption{\scriptsize sector-2500-500}\label{fig:sub2}\end{subfigure}
\begin{subfigure}{.24\textwidth}\centering\includegraphics[width=1.12\linewidth]{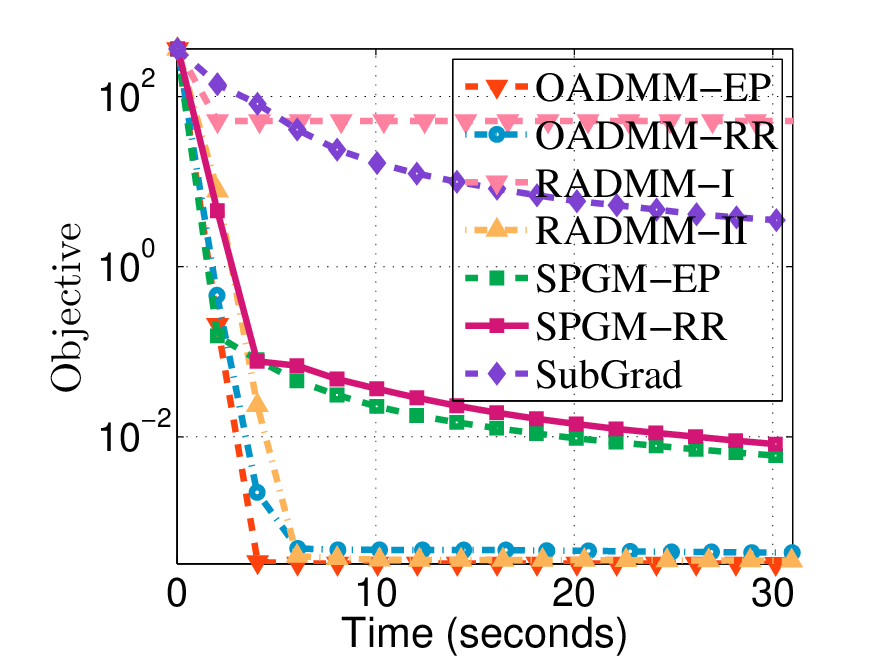}\caption{\scriptsize randn-1500-500}\label{fig:sub3}\end{subfigure}
\begin{subfigure}{.24\textwidth}\centering\includegraphics[width=1.12\linewidth]{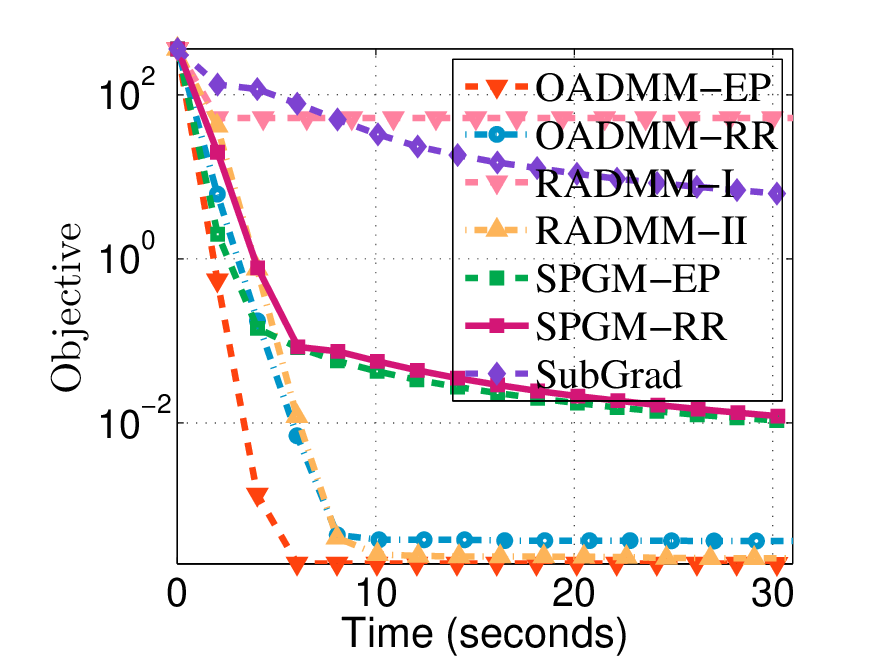}\caption{\scriptsize randn-2500-500}\label{fig:sub4}\end{subfigure}

\caption{The convergence curve of the compared methods with $\dot{\rho}=50$.}

\label{fig:L0PCA:rho:50}

\end{figure}

\begin{figure}[!t]

\centering
\begin{subfigure}{.24\textwidth}\centering\includegraphics[width=1.12\linewidth]{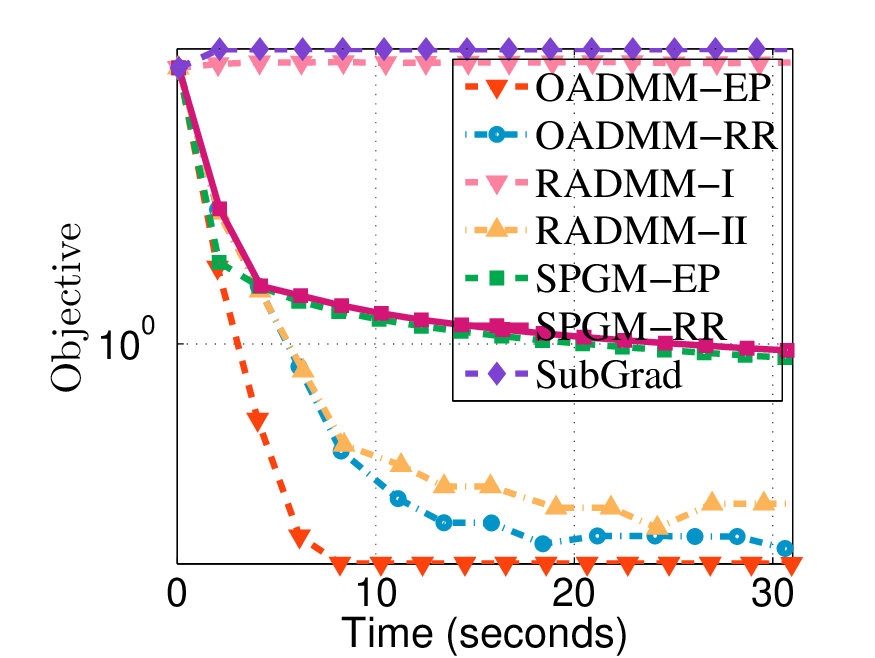}\caption{\scriptsize mnist-1500-500}\label{fig:sub1}\end{subfigure}
\begin{subfigure}{.24\textwidth}\centering\includegraphics[width=1.12\linewidth]{fig//demo_L0PCA500_12.eps}\caption{\scriptsize mnist-2500-500}\label{fig:sub2}\end{subfigure}
\begin{subfigure}{.24\textwidth}\centering\includegraphics[width=1.12\linewidth]{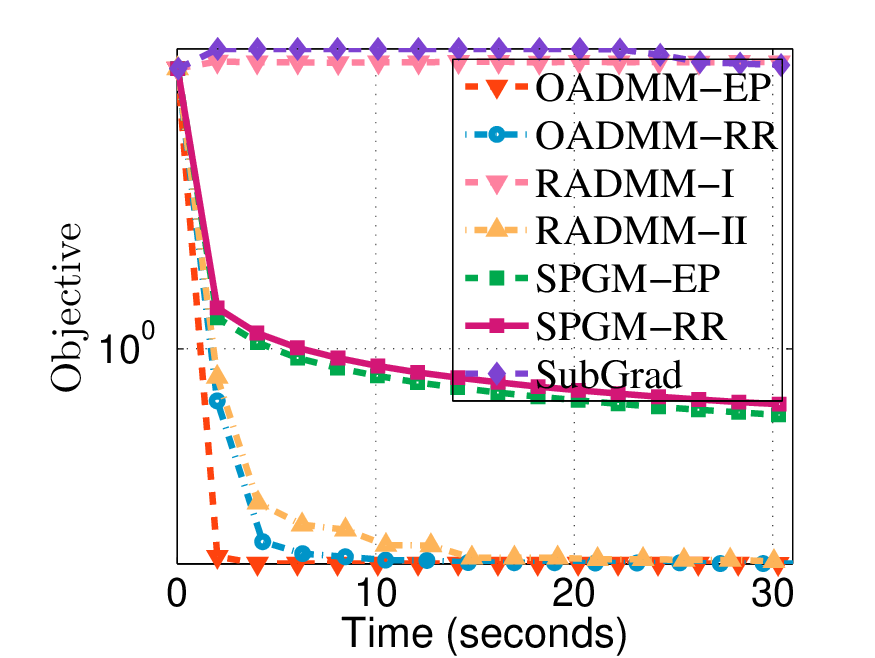}\caption{\scriptsize TDT2-1500-500}\label{fig:sub3}\end{subfigure}
\begin{subfigure}{.24\textwidth}\centering\includegraphics[width=1.12\linewidth]{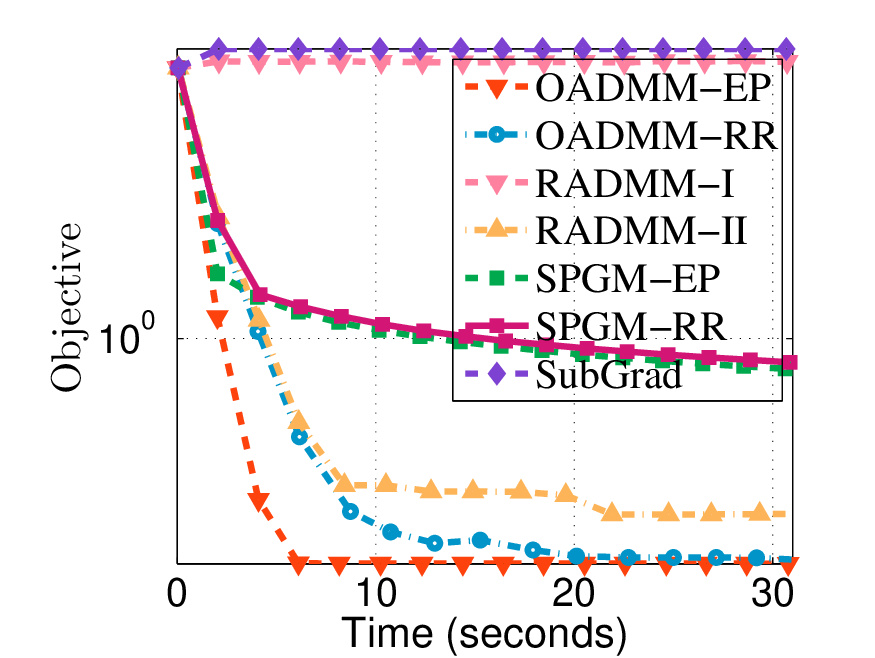}\caption{\scriptsize TDT2-2500-500}\label{fig:sub4}\end{subfigure}

\begin{subfigure}{.24\textwidth}\centering\includegraphics[width=1.12\linewidth]{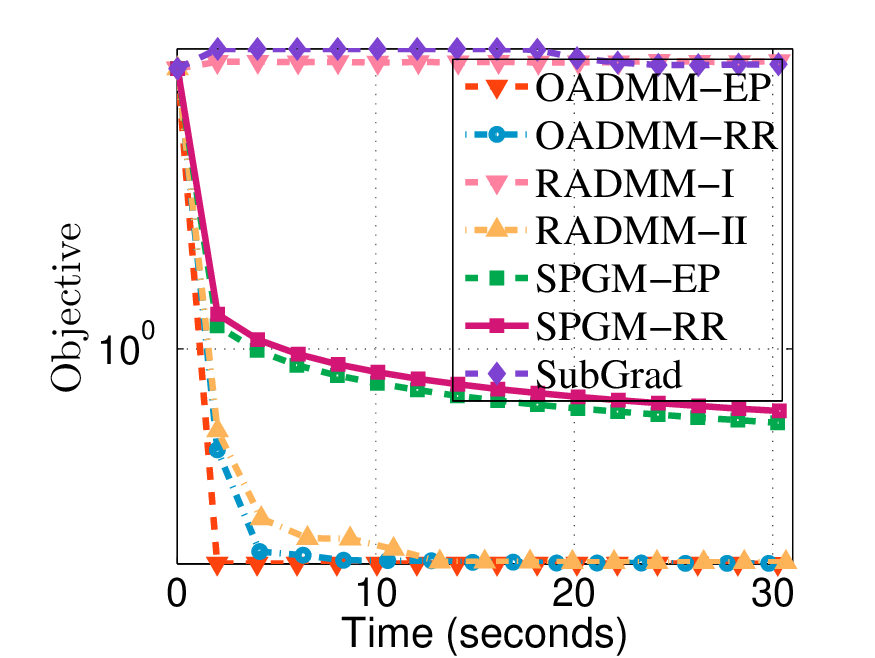}\caption{\scriptsize sector-1500-500}\label{fig:sub1}\end{subfigure}
\begin{subfigure}{.24\textwidth}\centering\includegraphics[width=1.12\linewidth]{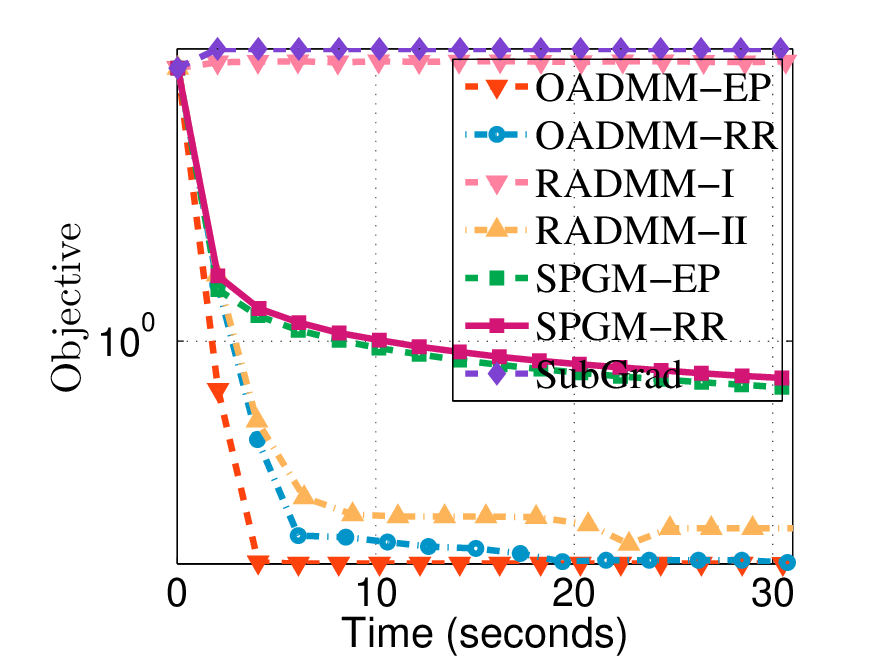}\caption{\scriptsize sector-2500-500}\label{fig:sub2}\end{subfigure}
\begin{subfigure}{.24\textwidth}\centering\includegraphics[width=1.12\linewidth]{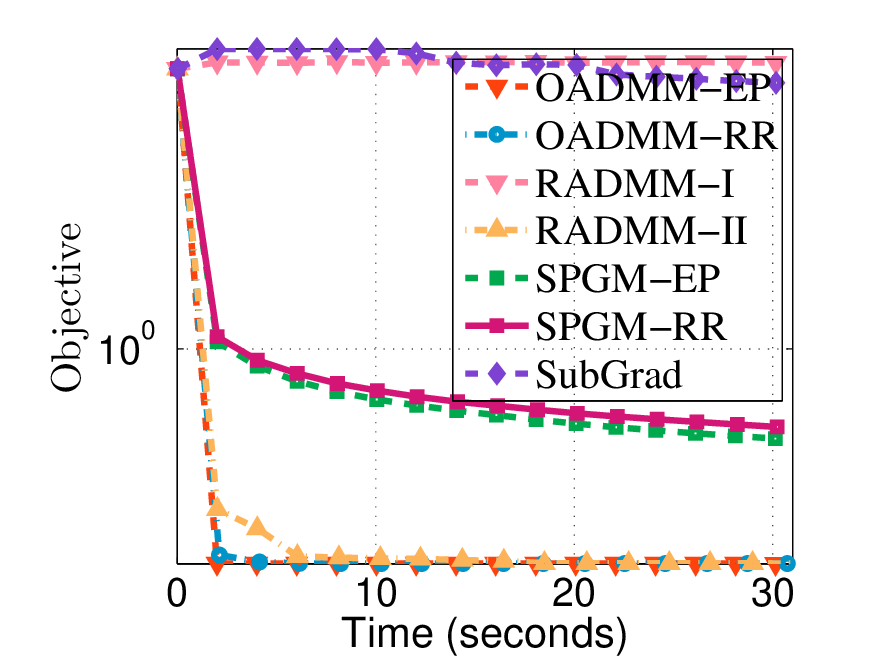}\caption{\scriptsize randn-1500-500}\label{fig:sub3}\end{subfigure}
\begin{subfigure}{.24\textwidth}\centering\includegraphics[width=1.12\linewidth]{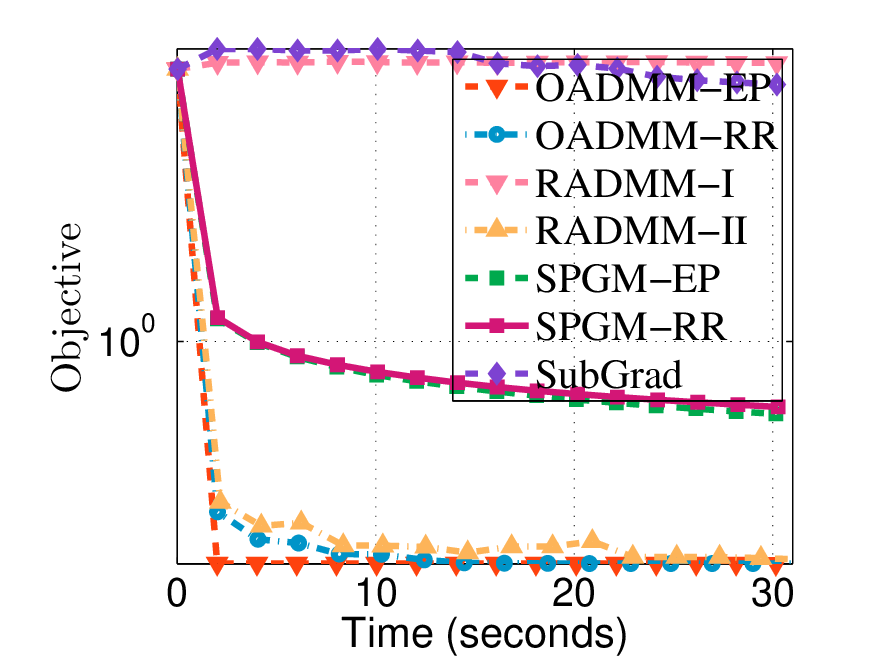}\caption{\scriptsize randn-2500-500}\label{fig:sub4}\end{subfigure}

\caption{The convergence curve of the compared methods with $\dot{\rho}=500$.}

\label{fig:L0PCA:rho:500}

\end{figure}

\normalem
\clearpage
\bibliographystyle{iclr2025_conference}
\bibliography{mybib}

\clearpage
\appendix
{\huge Appendix}

The appendix is organized as follows.

Appendix \ref{app:sect:notation} provides notations, technical preliminaries, and relevant lemmas.

Appendix \ref{app:sect:preli} contains the proofs for Section \ref{sect:preli}.

Appendix \ref{app:sect:iterC} includes the proofs for Section \ref{sect:iterC}.

Appendix \ref{app:sect:KL} encompasses the proofs for Section \ref{sect:strong:KL}.

Appendix \ref{app:exp} presents additional experiments details and results.

\section{Notations, Technical Preliminaries, and Relevant Lemmas} \label{app:sect:notation}
\subsection{Notations}


In this paper, lowercase boldface letters signify vectors, while uppercase letters denote real-valued matrices. The following notations are utilized throughout this paper.

\begin{itemize}[leftmargin=12pt,itemsep=0.2ex]


\item $[n]$: $\{1,2,...,n\}$

\item $\|\mathbf{x}\|$: Euclidean norm: $\|\mathbf{x}\|=\|\mathbf{x}\|_2 = \sqrt{\la \mathbf{x},\mathbf{x}\ra}$





\item $\X \trans$ : the transpose of the matrix $\X$





\item $\zero_{n,r}$ : A zero matrix of size $n \times r$; the subscript is omitted sometimes

\item $\I_r$  : $\I_r \in \Rn^{r\times r}$, Identity matrix

\item $\MM$: Orthogonality constraint set (a.k.a., Stiefel manifold: $\MM=\{\X \in\Rn^{n\times r}\,|\,\X\trans\X = \mathbf{I}_r\}$.


\item $\X\succeq\zero (\text{or}~\succ \zero)$ : the Matrix $\X$ is symmetric positive semidefinite (or definite)



\item $\tr(\mathbf{A})$ : Sum of the elements on the main diagonal $\mathbf{A}$: $\tr(\mathbf{A})=\sum_i \mathbf{A}_{i,i}$




\item $\|\X\|$ : Operator/Spectral norm: the largest singular value of $\X$

\item $\|\X\|_{\fro}$ : Frobenius norm: $(\sum_{ij}{\X_{ij}^2})^{1/2}$

\item $\|\X\|_{1}$: Absolute sum of the elements in $\X$ with $\X=\sum_{ij} |\X_{ij}|$

\item $\|\X\|_{[k]}$: $\ell_1$ norm the the $k$ largest (in magnitude) elements of the matrix $\X$

\item $\partial g(\X)$ : (limiting) Euclidean subdifferential of $g(\X)$ at $\X$



\item $\Proj_{\Xi}(\X')$ : Orthogonal projection of $\X'$ with  $\Proj_{\Xi}(\X') = \arg \arg \min_{\X\in\Xi}\|\X'-\X\|_{\fro}^2$



\item  $\text{dist}(\Xi,\Xi')$ : the distance
between two sets with $\text{dist}(\Xi,\Xi') \triangleq \inf_{\X\in\Xi,\X'\in\Xi'}\|\X-\X'\|_{\fro}$

\item  $\|\partial g(\X)\|_{\fro}$: $\|\partial g(\X)\|_{\fro} = \inf_{\mathbf{Y}\in \partial g(\X)}\|\mathbf{Y}\|_{\fro} = \dist(\mathbf{0},\partial g(\X))$.

\item $\ell(\beta^t)$: the smoothness parameter of the function $\mathcal{S}(\X,\y^t,z^t,\beta^t)$ \textit{w.r.t.} $\X$.


\item $\iota_{\MM}(\mathbf{\x})$ : Indicator function of $\MM$ with $\iota_{\MM}(\mathbf{\x})=0$ if $\mathbf{\x} \in\MM$ and otherwise $+\infty$.
\end{itemize}

We employ the following parameters in Algorithm \ref{alg:main}.

\begin{itemize}[leftmargin=12pt,itemsep=0.2ex]


\item $\theta$: proximal parameter

\item $\tau$: correlation coefficient between $\mu^t$ and $\beta^t$, such that $\mu^t\beta^t=\tau$

\item $\sigma$: over-relaxation parameter with $\sigma\in[1,2)$

\item $\alpha$: Nesterov extrapolation parameter with $\alpha \in [0,1)$

 \item $\rho$: search descent parameter with $\rho \in (0,\infty)$

 \item $\gamma$: decay rate parameter in the line search procedure with $\gamma \in (0,1)$

\item $\delta$: sufficient decrease parameter in the line search procedure with $\delta \in (0,\infty)$

\item $p$: exponent parameter used in the penalty update rule with $p\in(0,1)$

\item $\xi$: growth factor parameter used in the penalty update rule with $\xi\in(0,\infty)$
\end{itemize}



\subsection{Technical Preliminaries}

\textbf{Non-convex Non-smooth Optimization}. Given the potential non-convexity and non-smoothness of the function $F(\cdot)$, we introduce tools from non-smooth analysis \cite{Mordukhovich2006,Rockafellar2009}. The domain of any extended real-valued function $F: \Rn^{n\times r} \rightarrow (-\infty,+\infty]$ is defined as $\text{dom}(F)\triangleq \{\X\in\Rn^{n\times r}: |F(\X)|<+\infty\}$. At $\X\in\text{dom}(F)$, the Fr\'{e}chet subdifferential of $F$ is defined as $\hat{\partial}{F}(\X) \triangleq \{\boldsymbol{\xi}\in\Rn^{n\times r}: \lim_{\mathbf{Z} \rightarrow \X} \inf_{\mathbf{Z}\neq \X} \frac{ {F}(\mathbf{Z}) - {F}(\X) - \la \boldsymbol{\xi},\mathbf{Z}-\X \ra  }{\|\mathbf{Z}-\X\|_{\fro}} \geq 0\}$, while the limiting subdifferential of ${F}(\X)$ at $\X\in\text{dom}({F})$ is denoted as $\partial{F}(\X)\triangleq \{\boldsymbol{\xi}\in \Rn^n: \exists \X^t \rightarrow \X,{F}(\X^t)  \rightarrow {F}(\X), \boldsymbol{\xi}^t \in\hat{\partial}{F}(\X^t) \rightarrow \boldsymbol{\xi},\forall t\}$. The gradient of $F(\cdot)$ at $\X$ in the Euclidean space is denoted as $\nabla{F}(\X)$. The following relations hold among $\hat{\partial}{F}(\X)$, $\partial{F}(\X)$, and $\nabla{F}(\X)$: (\bfit{i}) $\hat{\partial}{F}(\X) \subseteq \partial{F}(\X)$.
(\bfit{ii}) If the function $F(\cdot)$ is convex, $\partial{F}(\X)$ and $\hat{\partial}{F}(\X)$ represent the classical subdifferential for convex functions, i.e., $\partial{F}(\X) = \hat{\partial}{F}(\X) = \{\boldsymbol{\xi}\in\Rn^{n\times r}: F(\mathbf{Z})\geq F(\X)+\la\boldsymbol{\xi},\mathbf{Z}-\X \ra,\forall \mathbf{Z}\in\Rn^{n\times r}\}$. (\bfit{iii}) If the function $F(\cdot)$ is differentiable, then $\hat{\partial}{F}(\X) = \partial{F}(\X) = \{\nabla F(\X)\}$.

\textbf{Optimization with Orthogonality Constraints}. We introduce some prior knowledge of optimization involving orthogonality constraints \cite{Absil2008}. The nearest orthogonality matrix to any arbitrary matrix $\mathbf{Y} \in \Rn^{n\times r}$ is determined as $\mathbb{P}_{\MM }(\mathbf{Y})=\breve{\U}\breve{\mathbf{V}}\trans$, where $\mathbf{Y}= \breve{\U}\Diag(\mathbf{s})\breve{\mathbf{V}}\trans$ represents the singular value decomposition of $\mathbf{Y}$. We use $\mathcal{N}_{\MM}(\X)$ to denote the limiting normal cone to $\MM$ at $\X$, thus defined as $\mathcal{N}_{\MM}(\X)=\partial \iota_{\MM}(\X)=\{\mathbf{Z} \in \Rn^{n\times r}: \la \mathbf{Z},\X\ra\geq \la \mathbf{Z},\mathbf{Y}\ra,\,\forall \mathbf{Y} \in \MM\}$. Moreover, the tangent and normal space to $\MM$ at $\X\in \MM$ are respectively denoted as $\mathrm{T}_{\X}\MM$ and $\mathrm{N}_{\X}\MM$. We have: $\mathrm{T}_{\X}\MM=\{\mathbf{Y}\in\Rn^{n\times r}|\AA_{X}(\mathbf{Y})=\mathbf{0}\}$ and $\mathrm{N}_{\X}\MM=2\X\mathbf{\Lambda}\,|\,\mathbf{\Lambda}=\mathbf{\Lambda}\trans,\mathbf{\Lambda}\in \Rn^{r\times r}\}$, where $\AA_{\X}(\mathbf{Y})\triangleq \X\trans \mathbf{Y} + \mathbf{Y} \trans \X$ for $\mathbf{Y} \in \Rn^{n\times r}$ and $\X \in \MM$.

\textbf{Weakly Convex Functions}. The function $h(\y)$ is weakly convex if there exists a constant $W_h\geq0$ such that $h(\y)+\tfrac{1}{2}W_h\|\y\|_2^2$ is convex; the smallest such $W_h$ is termed the modulus of weak convexity. Weakly convex functions encompass a diverse range, including convex functions, differentiable functions with Lipschitz continuous gradient, and compositions of convex, Lipschitz-continuous functions with $C^1$-smooth mappings having Lipschitz continuous Jacobians \cite{drusvyatskiy2019efficiency}.




\subsection{Relevant Lemmas}

We present a collection of useful lemmas, each of which is independent of context and methodology.

\begin{lemma} \label{lemma:the:alpha}
Assume $\mathbf{a},\mathbf{b}\in \Rn^n$, and $\alpha\geq 0$. We have: $-\|\mathbf{a} - \alpha \mathbf{b}\|_2^2 \leq (\alpha-1) \|\mathbf{a} \|_2^2 -  (\alpha^2-\alpha)  \|\mathbf{b} \|_2^2$.

\begin{proof}
We have: $-\|\mathbf{a} - \alpha \mathbf{b}\|_2^2 = -\|\mathbf{a}\|_2^2- \|\alpha \mathbf{b}\|_2^2 + 2 \alpha \la \mathbf{a}, \mathbf{b} \ra \leq -\|\mathbf{a}\|_2^2- \|\alpha \mathbf{b}\|_2^2 + 2 \alpha\cdot ( \tfrac{1}{2} \|\mathbf{a}\|_2^2 + \tfrac{1}{2}\|\mathbf{b}\|_2^2 ) = (\alpha-1) \|\mathbf{a} \|_2^2 -  (\alpha^2 - \alpha)  \|\mathbf{b} \|_2^2$.
\end{proof}

\end{lemma}

\begin{lemma} \label{lemma:a:b:sigma}
Assume $\a^+ = \varrho \a + \b$, where $\a,\b,\a^+\in\Rn^m$, and $\varrho\in [0,1)$. We have: $\|\a^+\|_2^2 - \tfrac{\varrho}{1-\varrho} (\|\a\|_2^2 - \|\a^+\|_2^2) \leq \tfrac{1}{(1-\varrho)^2}\|\b\|_2^2$.

\begin{proof}
We have: $\|\a^+\|_2^2 =  \|\varrho \a + \b\|_2^2 = \|\varrho \a + (1-\varrho) \cdot \tfrac{\b}{1-\varrho}\|_2^2 \leq \varrho \|\a\|_2^2 + (1-\varrho)\cdot \|\tfrac{\b}{1-\varrho}\|_2^2=\varrho\|\a\|_2^2 + \tfrac{1}{1-\varrho}\|\b\|_2^2$, where the inequality holds due to the convexity of $\|\cdot\|_2^2$. Subtracting $\varrho\|\a^+\|_2^2$ from both sides gives: $(1-\varrho)\|\a^+\|_2^2 \leq \varrho(\|\a\|_2^2-\|\a^+\|_2^2) + \tfrac{1}{1-\varrho}\|\b\|_2^2$. Dividing through by $(1-\varrho)$ yields the resulting inequality.

\end{proof}
\end{lemma}

\begin{lemma} \label{lemma:upper:lower}

Assume $p\in(0,1)$. We have:

\begin{enumerate}[label=\textbf{(\alph*)}, leftmargin=20pt, itemsep=1pt, topsep=1pt, parsep=0pt, partopsep=0pt]

\item $p (t+1)^{p-1} \leq (t+1)^p - t^p \leq p t^{p-1}$ for all integer $t\geq 1$.

\item $p (t+1)^{p-1} \leq (t+1)^p - t^p$ for all integer $t\geq 0$.
\end{enumerate}

\begin{proof}
The proof of this lemma follows from the concavity of $h(x) = x^p$ for all $x\geq 0$ and $p \in (0,1)$, and is omitted for brevity.

\end{proof}

\end{lemma}

\begin{lemma} \label{lemma:g:p:T:p:T}
Assume $p\in(0,1)$. For all $t\geq1$, we have $\tfrac{1}{1-p}(t+1)^{1-p}-\tfrac{1}{1-p} - (1-p) t^{1-p} \geq 0$.

\begin{proof}

We define $f(t) \triangleq \tfrac{1}{1-p}(t+1)^{1-p}-\tfrac{1}{1-p} - (1-p) t^{1-p}$, where $p\in(0,1)$ and $t\geq 1$.

\textbf{Part (a)}. We now show that $(1-p)^{1/p} \leq \tfrac{1}{\exp(1)}$. Recall that it holds: $\lim_{p \rightarrow 0^+} (1+p)^{1/p} = \exp(1)$ and $\lim_{p \rightarrow 0^+} (1-p)^{1/p} = 1/ \exp(1)$. Given the function $h(p)\triangleq (1-p)^{1/p}$ is a decreasing function on $p\in(0,1)$, we have $h(p)\leq \lim_{p \rightarrow 0^+} (1-p)^{1/p} = \tfrac{1}{\exp(1)}$.

\textbf{Part (b)}. We now show that $g(q) = 2^q - 1 - q^2\geq0$ for all $q\in(0,1)$. We have $\nabla g(q)= \log(2) 2^q - 2 q$, and $\nabla^2 g(q)= 2^q (\log(2))^2  - 2  \leq 2 (\log(2))^2  - 2\leq 0$, implying that the function $g(q)$ is concave on $q\in(0,1)$. Noticing $g(0)=g(1)=0$, we conclude that $g(q)\geq0$.

\textbf{Part (c)}. We now show that $f(t)$ is an increasing function. We have: $\nabla f(t) = \ts (t+1)^{-p} - (1-p)^2  t^{-p}  =  \ts(t+1)^{-p} \cdot ( 1 - (1-p)^2  (\tfrac{t+1}{t})^{p} )\overset{\step{1}}{\geq} \ts(t+1)^{-p} \cdot ( 1 - (1-p)^2  2^{p} )\overset{\step{2}}{\geq} \ts (t+1)^{-p}  \cdot ( 1 - (\frac{2}{\exp(1)^{2}})^p   ) \overset{\step{3}}{\geq}  \ts 0$, where step \step{1} uses $\frac{t+1}{t}\leq 2$ for all $t\geq 1$; step \step{2} uses $1-p\leq (\tfrac{1}{\exp(1)})^p$ for all $p\in(0,1)$; step \step{3} uses $\tfrac{2}{\exp(1)^{2}} \thickapprox 0.2707 <1$.

\textbf{Part (d)}. Finally, we have: $\forall t\geq 1,\,f(t) \overset{\step{1}}{\geq} f(1) =  (1-p)^{-1}\cdot \{ 2^{(1-p)}-1 - (1-p)^2\} \overset{\step{2}}{\geq} 0$, where step \step{1} uses the fact that $f(t)$ is an increasing function; step \step{2} uses $2^q - 1 - q^2\geq0$ for all $q=1-p\in(0,1)$.

\end{proof}

\end{lemma}

\begin{lemma} \label{lemma:lp:bounds:2}
Assume $p\in(0,1)$. We have: $(1-p) T^{(1-p)}\leq \sum_{t=1}^T \tfrac{1}{t^p} \leq \tfrac{T^{(1-p)}}{1-p}$.

    \begin{proof}

We define $g(t) \triangleq \tfrac{1}{t^p} $ and $h(t)\triangleq \frac{1}{1-p} t^{(1-p)}$.

Using the integral test for convergence, we obtain: $\int_{1}^{T+1} g(x)dx \leq \sum_{t=1}^T g(t) \leq g(1) + \int_{1}^{T} g(x)dx$.

\textbf{Part (a)}. We now consider the lower bound. We obtain: $\sum_{t=1}^T t^{-p}  \geq  \ts \sum_{t=1}^T \int_{t}^{t+1} x^{-p} dx = \int_{1}^{T+1} x^{-p} dx  \overset{\step{1}}{\geq}   \ts h(T+1) - h(1) =  \tfrac{1}{1-p} (T+1)^{1-p}-\tfrac{1}{1-p} \overset{\step{2}}{\geq}  (1-p) T^{1-p}$, where step \step{1} uses $\nabla h(x)= x^{-p}$; step \step{2} uses Lemma \ref{lemma:g:p:T:p:T}.

\textbf{Part (b)}. We now consider the upper bound. We have: $\sum_{t=1}^T t^{-p} \leq \ts h(1) + \int_{1}^{T} x^{-p} dx  \overset{\step{1}}{=} 1 + h(T) - h(1) =       1 + \tfrac{1}{1-p} (T)^{1-p}-\tfrac{1}{1-p} \overset{}{=}   \tfrac{T^{(1-p)} - p }{1-p} < \tfrac{T^{(1-p)}}{1-p}$, where step \step{1} uses $\nabla h(x)= x^{-p}$.

\end{proof}

\end{lemma}

\begin{lemma} \label{lemma:two:non:negative:sequences}

Assume that $\a^{t} \leq \varrho \a^{t-1} + c$, where $\varrho\in [0,1)$, $c\geq 0$, and $\{\a^i\}_{i=0}^{\infty}$ is a non-negative sequence. We have: $\a^{t}  \leq \a^{0} +  \tfrac{c}{1-\varrho}$ for all $t\geq 0$.

\begin{proof}
Using basic induction, we have the following results:
\beq
& \ts t=1,&\ts \a^{1} \leq \varrho \a^{0} + c \nn\\
& \ts t=2,& \ts \a^{2} \leq \varrho \a^{1} + c \leq  \varrho (\varrho \a^{0} + c) + c = \varrho^2 \a^{0} +   c (1 + \varrho )     \nn\\
& \ts t=3,&\ts \a^{3} \leq \varrho \a^{2} + c\leq  \varrho (\varrho^2 \a^{0} +   ( c + \varrho c)) + c = \varrho^3 \a^{0} +  c ( 1 + \varrho  + \varrho^2 )     \nn\\
& \ts ...& \nn\\
& \ts t=n,& \ts\a^{n} \leq \varrho \a^{n-1} + c\leq  \varrho^n \a^{0} + c \cdot (1 + \varrho + \ldots +  \varrho^{n-1} ).\nn
\eeq
\noi Therefore, we obtain: $\a^{n} \leq\varrho^n \a^{0} + c \cdot (1 + \varrho + \ldots +  \varrho^{n-1} ) \overset{\step{1}}{\leq} \a^0 + \frac{c}{1-\varrho}$, where step \step{1} uses $\rho^n\leq \rho < 1$, and the summation formula of geometric sequences that $ 1 + \varrho^1 + \varrho^2 + ... +  \varrho^{t-1} = \frac{1-\varrho^t}{1-\varrho} < \frac{1}{1-\varrho}$.

\end{proof}

\end{lemma}

\begin{lemma} \label{lemma:p:3:frac}
  
  For all $p\in(0,1)$, it holds that $(p-3) 2^p + (p+3)\leq 0$.
   
   \begin{proof}
     
We define $f(p) = (p-3) 2^p + (p+3)\leq 0$. 
     
We define $g(p)\triangleq f'(p)=2^p \cdot [1+(p-3)\log(2)] +1$.

We define $h(p)\triangleq f''(p)=2^p\log(2) \cdot [2+(p-3)\log(2)]$. 


\textbf{Part (a)}. Clearly, the equation $h(p)=0$ has one unique solution $p^*\triangleq 3 - \tfrac{2}{\log(2)} \thickapprox 0.1146$. Notably, $h(p) \leq 0$ for all $p\in(0,p^*]$, and $h(p)\geq 0$ for all $p\in [p^*,1)$.

\textbf{Part (b)}. We now show that the equation $g(p)=0$ has a unique solution. Noting that $g(p)$ is deceasing on $(0,p^*)$ and increasing on $(p^*,1)$, and observing that $g(p)<0$ for all $(0,p^*)$ and $g(1)>0$, we conclude, by the Mean Value Theorem, that $g(p)=0$ has a unique solution.

\textbf{Part (c)}. Given that $g(p)=0$ has a unique solution, we conclude that $f(p)$ has exactly one critical point on $(0,1)$. Since $f(0)=f(1)=0$, $f'(0)<0$, and $f'(1)>0$, it follows that $f(p)\leq0$ for all $p\in(0,1)$.

   \end{proof}

\end{lemma}

\begin{lemma}\label{lemma:bound:3:p}
Let $\beta^t = \beta^0 (1 + \xi t^p)$, where $t\geq 1$, $\beta^0>0$, $\xi,p\in(0,1)$. For all integer $t\geq 1$, we have: $(\tfrac{1}{\beta^{t-1}}-\tfrac{1}{\beta^{t}}) / (\tfrac{1}{\beta^{t}}- \tfrac{1}{\beta^{t+1}}) \leq \tfrac{3}{p}$.

\begin{proof}

We derive:
\beq \label{eq:Psi:two:case}
\Psi &\triangleq & \ts (\tfrac{1}{\beta^{t-1}}-\tfrac{1}{\beta^{t}}) / (\tfrac{1}{\beta^{t}}- \tfrac{1}{\beta^{t+1}}) \nn\\
&\overset{\step{1}}{=}& \ts (\tfrac{1}{1 + \xi (t-1)^p}-\tfrac{1}{1 + \xi t^p}) / (\tfrac{1}{1 + \xi t^p}- \tfrac{1}{1 + \xi (t+1)^p}) \nn\\
&=& \ts (\frac{\xi t^p - \xi (t-1)^p}{[1+\xi(t-1)^p][1+\xi t^p]}) / (\frac{\xi (t+1)^p - \xi t^p}{[1+\xi(t+1)^p][1+\xi t^p]}) \nn\\
&=& \ts \tfrac{t^p- (t-1)^p}{(t+1)^p - t^p}  \cdot \tfrac{1+\xi (t+1)^p}{1+\xi (t-1)^p},
\eeq
\noi where step \step{1} uses the definition of $\beta^t$.

We now focus on Inequality (\ref{eq:Psi:two:case}). Case (\bfit{i}). When $t=1$, we have:
\beq
\Psi  =  \tfrac{1}{2^p-1}\cdot \tfrac{1+\xi 2^p}{1} = \tfrac{2^p+1}{2^p-1} \overset{\step{1}}{\leq } \tfrac{3}{p}, \nn
\eeq
\noi where step \step{1} uses the fact that $\tfrac{2^p+1}{2^p-1}\leq \tfrac{3}{p}$ for all $p\in(0,1)$, which is implied by Lemma \ref{lemma:p:3:frac}.

Case (\bfit{ii}). When $t\geq 2$, we have:
\beq
\Psi  &=& \ts  \tfrac{t^p- (t-1)^p}{(t+1)^p - t^p}  \cdot \tfrac{1+\xi (t+1)^p}{1+\xi (t-1)^p}\nn\\
&\overset{\step{1}}{\leq}&  \tfrac{ p (t-1)^{p-1}}{ p (t+1)^{p-1}}  \cdot \tfrac{1+\xi (t+1)^p}{1+\xi (t-1)^p} \nn\\
&\overset{\step{2}}{\leq}& \tfrac{ p (t-1)^{p-1}}{ p (t+1)^{p-1}} \cdot \tfrac{(t+1)^p}{(t-1)^p} \nn\\
&\overset{}{=}  & \ts \tfrac{t+1}{t-1}  \leq   \ts \tfrac{3}{p}, \nn
\eeq
\noi where step \step{1} uses Lemma \ref{lemma:upper:lower}; step \step{2} uses $\tfrac{a+b}{c+d}\leq \max(\tfrac{a}{c},\tfrac{b}{d})$ for all $a,b,c,d>0$.

\end{proof}
\end{lemma}

\begin{lemma}\label{lemma:bound:mumumu}
Let $\beta^t = \beta^0 (1 + \xi t^p)$, where $t\geq 0$, $\beta^0>0$, $\xi,p\in(0,1)$. For all $t\geq 1$, we have: $(\tfrac{\beta^{t}}{\beta^{t-1}} - 1)^2 \leq (\tfrac{1}{\beta^t} - \tfrac{1}{\beta^{t+1}} ) \cdot \tfrac{6 \beta^0}{p}$.

\begin{proof}

First, we derive:
\beq
\Psi  &=&  (\tfrac{\beta^{t}}{\beta^{t-1}} - 1)^2 / (\tfrac{1}{\beta^t} - \tfrac{1}{\beta^{t+1}} ) \nn\\
&=& \beta^{t} (\tfrac{\beta^{t}}{\beta^{t-1}} - 1) \cdot\{ (\tfrac{1}{\beta^{t-1}} - \tfrac{1}{\beta^{t}}) / (\tfrac{1}{\beta^t} - \tfrac{1}{\beta^{t+1}} )\}\nn\\
&\overset{\step{1}}{\leq}& \tfrac{3}{p} \beta^{t} (\tfrac{\beta^{t}}{\beta^{t-1}} - 1) \nn\\
&= & \tfrac{3}{p} \xi \beta^0  (1 + \xi t^p) (\tfrac{  t^p - (t-1)^p }{  1 + \xi (t-1)^p } ),  \label{eq:Psi:two:case:2}
\eeq
\noi where step \step{1} uses Lemma \ref{lemma:bound:3:p}. We now focus on Inequality (\ref{eq:Psi:two:case:2}). Case (\bfit{i}). When $t=1$, we have:
\beq
\Psi \leq \tfrac{3}{p}\xi \beta^0 (1+\xi)\overset{\step{1}}{\leq} \tfrac{6}{p}\beta^0, \nn
\eeq
\noi where step \step{1} uses $\xi\in(0,1)$.

Case (\bfit{ii}). When $t\geq 2$, we have:
\beq
\Psi &\leq& \tfrac{3}{p} \xi \beta^0   \tfrac{1 + \xi t^p  }{  1 + \xi (t-1)^p }  \cdot ( t^p - (t-1)^p) \nn\\
&\overset{\step{1}}{\leq}& \tfrac{3}{p} \xi \beta^0   \tfrac{1 + \xi t^p  }{  1 + \xi (t-1)^p }  \cdot ( p (t-1)^{p-1}  ) \nn\\
&\overset{\step{2}}{\leq}& \tfrac{3}{p} \xi \beta^0  \tfrac{ t^p }{ (t-1)^p } \cdot ( p (t-1)^{p-1}  ) \nn\\
& = & 3 \xi \beta^0  \tfrac{ t^p }{ t-1 } \nn\\
&\overset{\step{3}}{\leq}& 3 \xi \beta^0  \tfrac{ t }{ t-1 } \nn\\
&\overset{\step{4}}{\leq}& 6 \beta^0/p, \nn
\eeq
\noi where step \step{1} uses Lemma \ref{lemma:upper:lower}; step \step{2} uses $\tfrac{a+b}{c+d}\leq \max(\tfrac{a}{c},\tfrac{b}{d})$ for all $a,b,c,d>0$; step \step{3} uses $p\in(0,1)$; step \step{4} uses $p,\xi\in(0,1)$, and $\tfrac{t}{t-1}\leq 2$ for all $t\geq 2$.


\end{proof}
\end{lemma}

\begin{lemma} \label{lemma:bound:4}

Assume $\beta^t=\beta^0(1+\xi t^p)$, where $p\in [1/4,1)$, and $t\geq 1$ is an integer. We have: $(\tfrac{1}{\beta^t})^{3} \leq c \sqrt{ (\tfrac{1}{\beta^{t-1}}-\tfrac{1}{\beta^{t}}) \tfrac{1}{\beta^{t-1}} }$, where $c = \tfrac{2}{ (\xi\beta^0)^2}$.

\begin{proof}

For all $t\geq 1$, we have:
\beq
\ts (\tfrac{1}{\beta^t})^{3} / \sqrt{ (\tfrac{1}{\beta^{t-1}}-\tfrac{1}{\beta^{t}}) \tfrac{1}{\beta^{t-1}} }  &\overset{\step{1}}{\leq}& \ts (\tfrac{1}{\beta^t})^{2} / \sqrt{ \tfrac{\beta^{t-1}}{\beta^{t-1}}-\tfrac{\beta^{t-1}}{\beta^{t}}  }  \nn\\
&\overset{\step{2}}{=}& \ts \tfrac{1}{(\beta^0)^2} \cdot \tfrac{1}{  (1+\xi t^p)^{2} }  \cdot \sqrt{ \tfrac{ 1+\xi t^p  }{ \xi t^p-\xi (t-1)^p }  }  \nn\\
&\overset{}{=}& \ts \tfrac{1}{(\beta^0)^2} \cdot \tfrac{1}{  (1+\xi t^p)^{3/2} }  \cdot \sqrt{ \tfrac{ 1  }{ \xi t^p-\xi (t-1)^p }  }  \nn\\
&\overset{\step{3}}{\leq}& \ts \tfrac{1}{(\beta^0)^2} \cdot \tfrac{1}{  (\xi t^p)^{3/2} }  \cdot \sqrt{ \tfrac{ 1  }{ \xi p t^{p-1} }  }  \nn\\
&\overset{}{=}& \ts \tfrac{1}{(\xi\beta^0)^2} \tfrac{1}{\sqrt{p}} \cdot  \sqrt{  t^{-3p} }    \cdot \sqrt{  t^{1-p}   }  \nn\\
&\overset{}{=}& \ts \tfrac{1}{(\xi\beta^0)^2} \tfrac{1}{\sqrt{p}} \cdot \sqrt{  t^{1-4p}}  \nn\\
&\overset{\step{4}}{\leq}& \ts \tfrac{2}{ (\xi\beta^0)^2}, \nn
\eeq
\noi where step \step{1} uses $\beta^t\leq \beta^{t+1}$; step \step{2} uses $\beta^t=\beta^0(1+\xi t^p)$; step \step{3} uses Lemma \ref{lemma:upper:lower} that $t^p-\xi (t-1)^p \geq p t^{p-1}$ for all integer $t\geq 1$; step \step{4} uses $p\geq 1/4$.

\end{proof}
\end{lemma}

\begin{lemma} \label{lemma:simple}
Assume $\X_{\cc}^{t} = \X^{t}+\alpha (\X^{t}  - \X^{t-1})$, where $\alpha \in[0,1)$, and $\X^t,\X^{t-1}\in\MM$. We have:

(\bfit{a}) $\| \X^{t} - \X^{t}_{\cc}\|_{\fro} \leq \| \X^{t} - \X^{t-1}\|_{\fro}$.

(\bfit{b}) $\| \X^{t+1} - \X^{t}_{\cc}\|_{\fro} \leq \| \X^{t+1} - \X^{t}\|_{\fro}+ \| \X^{t} -  \X^{t-1} \|_{\fro}$.

(\bfit{c}) $\|\AA(\X_{\cc}^t) - \y^t\| \leq \| \AA(\X^{t}) - \y^t\| + \Aup \| \X^{t} - \X^{t-1}\|_{\fro}$.

\begin{proof}

\textbf{Part (a)}. We have: $\| \X^{t} - \X^{t}_{\cc}\|_{\fro} \overset{\step{1}}{=} \alpha \|\X^{t} -  \X^{t-1} \|_{\fro} \overset{\step{2}}{\leq} \| \X^{t} - \X^{t-1}\|_{\fro}$, where step \step{1} uses $\X^t_{\cc} = \X^{t} + \alpha (\X^t - \X^{t-1})$; step \step{2} uses $\alpha\in[0,1)$.

\textbf{Part (b)}. We have: $\| \X^{t+1} - \X^{t}_{\cc}\|_{\fro} \overset{\step{1}}{=} \| \X^{t+1} - \X^{t} - \alpha ( \X^{t} -  \X^{t-1}) \|_{\fro}\overset{\step{2}}{\leq} \| \X^{t+1} - \X^{t}\|_{\fro}+ \| \X^{t} -  \X^{t-1} \|_{\fro}$, where step \step{1} uses $\X^t_{\cc} = \X^{t} + \alpha (\X^t - \X^{t-1})$; step \step{2} uses the triangle inequality and $\alpha\in[0,1)$.

\textbf{Part (c)}. We have: $\|\AA(\X^{t}_{\cc}) - \y^{t}\| \overset{\step{1}}{\leq }\|\AA(\X^{t}) - \y^t\|+\| \AA(\X^{t}) - \AA(\X^{t}_{\cc})\| \leq \|\AA(\X^{t}) - \y^t\|+\Aup \| \X^{t} - \X^{t}_{\cc}\|\overset{\step{2}}{\leq }\| \AA(\X^{t}) -\y^t\|+\Aup \| \X^{t} - \X^{t-1}\|$, where step \step{1} uses the triangle inequality; step \step{2} uses Claim (\bfit{a}) of this lemma.

\end{proof}

\end{lemma}

\begin{lemma} \label{lemma:Proj:Proj}

Let $\mathbf{P}, \tilde{\mathbf{P}}\in \Rn^{n\times r}$, and $\X,\tilde{\X}\in\MM$. We have:
\beq
\| \Proj_{\T_{\X} \MM} (\mathbf{P}) - \Proj_{\T_{\tilde{\X}}\MM} (\tilde{\mathbf{P}})\|_{\fro} \leq 2\| \mathbf{P}-\tilde{\mathbf{P}}\|_{\fro} + 2 \sqrt{r} \|\mathbf{P}\|\|\X-\tilde{\X}\|_{\fro}.\nn
\eeq

\begin{proof}



First, we obtain:
\beq \label{eq:Proj:AAA}
&&\|\X \mathbf{P}\trans \X - \tilde{\X} \tilde{\mathbf{P}}\trans \tilde{\X}\|_{\fro} \nn\\
&=& \| (\X-\tilde{\X}) \mathbf{P}\trans \X + \tilde{\X}\mathbf{P}\trans (\X-\tilde{\X}) + \tilde{\X} (\mathbf{P} - \tilde{\mathbf{P}})\trans \tilde{\X}  \|_{\fro} \nn\\
&\overset{\step{1}}{\leq}& \| \X-\tilde{\X}\|_{\fro} \|\mathbf{P}\trans \X\| + \|\tilde{\X}\mathbf{P}\trans\|\| \X-\tilde{\X}\|_{\fro} + \|\tilde{\X} (\mathbf{P} - \tilde{\mathbf{P}})\trans \tilde{\X}\|_{\fro}  \nn\\
&\overset{\step{2}}{\leq}&  2 \sqrt{r} \|\mathbf{P}\| \| \X-\tilde{\X}\|_{\fro} + \| \mathbf{P} - \tilde{\mathbf{P}}\|_{\fro},
\eeq
\noi where step \step{1} uses the triangle inequality; step \step{2} uses $\|\mathbf{A}\mathbf{B}\|_{\fro} \leq \|\mathbf{A}\|\cdot \|\mathbf{B}\|_{\fro}$, and $\|\tilde{\X}\|\leq1$.

Second, we have:
\beq\label{eq:Proj:BBB}
&& \|\X \X\trans \mathbf{P} - \tilde{\X} \tilde{\X}\trans \tilde{\mathbf{P}}\|_{\fro}\nn\\
&=& \| (\X-\tilde{\X}) \X\trans\mathbf{P}  + \tilde{\X}(\X-\tilde{\X})\trans \mathbf{P} + \tilde{\X} \tilde{\X}\trans  (\mathbf{P} - \tilde{\mathbf{P}}) \|_{\fro} \nn\\
&\overset{\step{1}}{\leq}& \| \X-\tilde{\X}\|_{\fro} \|\X\trans \mathbf{P}\| + \|\tilde{\X}\|\cdot \| \X-\tilde{\X}\|_{\fro} \cdot \|\mathbf{P}\| + \|\tilde{\X} \tilde{\X}\trans \| \cdot \|\mathbf{P} - \tilde{\mathbf{P}}\|_{\fro}  \nn\\
&\overset{\step{2}}{\leq}&  2 \sqrt{r} \|\mathbf{P}\| \| \X-\tilde{\X}\|_{\fro} + \| \mathbf{P} - \tilde{\mathbf{P}}\|_{\fro},
\eeq
\noi where step \step{1} uses the triangle inequality; step \step{2} uses $\|\mathbf{A}\mathbf{B}\|_{\fro} \leq \|\mathbf{A}\|\cdot \|\mathbf{B}\|_{\fro}$, and $\|\tilde{\X}\|\leq1$.

Finally, we derive:
\beq
&&\| \Proj_{\T_{\X} \MM} (\mathbf{P}) - \Proj_{\T_{\tilde{\X}}\MM} (\tilde{\mathbf{P}})\|_{\fro} \nn\\
&\overset{\step{1}}{=}& \| [\mathbf{P} - \tfrac{1}{2}\X \mathbf{P}\trans \X - \tfrac{1}{2}\X \X\trans \mathbf{P}  ] - [\tilde{\mathbf{P}} - \tfrac{1}{2}\tilde{\X} \tilde{\mathbf{P}}\trans \tilde{\X} - \tfrac{1}{2}\tilde{\X} \tilde{\X}\trans \tilde{\mathbf{P}}  ] \|_{\fro}\nn\\
&\overset{\step{2}}{\leq}& \| \mathbf{P}  -  \tilde{\mathbf{P}} \|_{\fro} + \tfrac{1}{2}\| \X \mathbf{P}\trans \X - \tilde{\X} \tilde{\mathbf{P}} \trans \tilde{\X} \|_{\fro} + \tfrac{1}{2}\| \X \X\trans \mathbf{P} - \tilde{\X} \tilde{\X} \trans \tilde{\mathbf{P}} \|_{\fro} \nn\\
&\overset{\step{3}}{\leq}& \| \mathbf{P}  -  \tilde{\mathbf{P}} \|_{\fro} +2 \sqrt{r} \|\mathbf{P}\| \| \X-\tilde{\X}\|_{\fro} + \| \mathbf{P} - \tilde{\mathbf{P}}\|_{\fro} \nn
\eeq
\noi where step \step{1} uses $\Proj_{\T_{\X}\MM}(\Deltas) = \Deltas - \tfrac{1}{2}\X (\Deltas \trans \X+\X\trans \Deltas)$ for all $\Deltas\in\Rn^{n\times r}$ \cite{absil2008optimization}; step \step{2} uses the triangle inequality; step \step{3} uses Inequalities (\ref{eq:Proj:AAA}) and (\ref{eq:Proj:BBB}).

\end{proof}

\end{lemma}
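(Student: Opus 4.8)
The plan is to decompose the difference of the two projections using the explicit formula $\Proj_{\T_{\X}\MM}(\Deltas) = \Deltas - \tfrac{1}{2}\X(\Deltas\trans\X + \X\trans\Deltas)$, which splits the projection into the identity term $\Deltas$ plus two symmetric correction terms, $-\tfrac12\X\Deltas\trans\X$ and $-\tfrac12\X\X\trans\Deltas$. Applying this to both $(\X,\mathbf{P})$ and $(\tilde{\X},\tilde{\mathbf{P}})$ and using the triangle inequality reduces the problem to bounding three quantities: $\|\mathbf{P}-\tilde{\mathbf{P}}\|_{\fro}$, which is already in the desired form; $\tfrac12\|\X\mathbf{P}\trans\X - \tilde{\X}\tilde{\mathbf{P}}\trans\tilde{\X}\|_{\fro}$; and $\tfrac12\|\X\X\trans\mathbf{P} - \tilde{\X}\tilde{\X}\trans\tilde{\mathbf{P}}\|_{\fro}$.

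For each of the two trilinear-type terms, the key device is a telescoping identity: write the difference of two products $ABC - \tilde{A}\tilde{B}\tilde{C}$ as a sum of three terms in each of which only one factor differs, e.g. $\X\mathbf{P}\trans\X - \tilde{\X}\tilde{\mathbf{P}}\trans\tilde{\X} = (\X-\tilde{\X})\mathbf{P}\trans\X + \tilde{\X}\mathbf{P}\trans(\X-\tilde{\X}) + \tilde{\X}(\mathbf{P}-\tilde{\mathbf{P}})\trans\tilde{\X}$, and analogously for $\X\X\trans\mathbf{P} - \tilde{\X}\tilde{\X}\trans\tilde{\mathbf{P}}$. Then I bound each summand with the submultiplicative inequality $\|\mathbf{A}\mathbf{B}\|_{\fro}\leq\|\mathbf{A}\|\cdot\|\mathbf{B}\|_{\fro}$ (and its right-handed version), using that $\X,\tilde{\X}\in\MM$ implies $\|\X\|=\|\tilde{\X}\|=1$, together with $\|\mathbf{P}\trans\X\|\le\|\mathbf{P}\|$, $\|\X\trans\mathbf{P}\|\le\|\mathbf{P}\|$, etc. This yields $\|\X\mathbf{P}\trans\X - \tilde{\X}\tilde{\mathbf{P}}\trans\tilde{\X}\|_{\fro}\le 2\sqrt{r}\|\mathbf{P}\|\|\X-\tilde{\X}\|_{\fro} + \|\mathbf{P}-\tilde{\mathbf{P}}\|_{\fro}$ and the identical bound for the other term (the factor $\sqrt{r}$ comes from converting an operator-norm bound on $\X-\tilde\X$ to a Frobenius-norm bound, or more simply from bounding $\|\X-\tilde\X\|\le\|\X-\tilde\X\|_{\fro}$ and keeping one extra $\sqrt r$ slack to match the stated constant — I will be slightly generous here rather than optimize).

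Summing the three pieces and collecting constants gives $\|\Proj_{\T_{\X}\MM}(\mathbf{P}) - \Proj_{\T_{\tilde{\X}}\MM}(\tilde{\mathbf{P}})\|_{\fro}\le 2\|\mathbf{P}-\tilde{\mathbf{P}}\|_{\fro} + 2\sqrt{r}\|\mathbf{P}\|\|\X-\tilde{\X}\|_{\fro}$, which is the claim. The proof is essentially a routine manipulation; the only mild subtlety — which I would call the main point to get right rather than a genuine obstacle — is being careful about which norm (operator vs.\ Frobenius) sits on which factor when applying $\|\mathbf{A}\mathbf{B}\|_{\fro}\le\|\mathbf{A}\|\|\mathbf{B}\|_{\fro}$, so that the $\|\mathbf{P}\|$ that appears is the operator norm (as stated) and not the larger Frobenius norm, and so that all $\X-\tilde{\X}$ factors end up measured in Frobenius norm as the statement requires.
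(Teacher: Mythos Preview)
Your proposal is correct and follows essentially the same approach as the paper: expand the projection via $\Proj_{\T_{\X}\MM}(\Deltas)=\Deltas-\tfrac12\X(\Deltas\trans\X+\X\trans\Deltas)$, apply the triangle inequality, telescope each trilinear difference exactly as you wrote, and bound each summand with $\|\mathbf{A}\mathbf{B}\|_{\fro}\le\|\mathbf{A}\|\,\|\mathbf{B}\|_{\fro}$ together with $\|\X\|=\|\tilde\X\|=1$. Your observation that the $\sqrt{r}$ is slack is accurate; the paper also inserts it without it being strictly necessary at that step.
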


\begin{lemma} \label{lemma:any:seq}

Assume $(e^{t+1})^2 \leq (p^t - p^{t+1}) \cdot  (e^{t} +w^t)$, where $\{p^t\}_{t=1}^{\infty}$ is a nonnegative decreasing sequence. We have: $\sum_{t=i}^{\infty} e^{t+1} \leq df$.

\begin{proof}

\end{proof}
\end{lemma}

\begin{lemma} \label{lemma:any:three:seq}

Assume $(e^{t+1})^2 \leq e^{t}  (p^t-p^{t+1})$ and $p^t\geq p^{t+1}$, where $\{e^{t},\,p^t\}_{t=0}^{\infty}$ are two nonnegative sequences. For all $i\geq 1$, we have: $\sum_{t=i}^{\infty} e^{t+1} \leq e^i + e^{i-1} + 4 p^{i}$.

\begin{proof}

We define $w_t\triangleq p^t-p^{t+1}$. We let $1\leq i<T$. We let $\alpha>0$ with $1-\sqrt{ \tfrac{\alpha}{2}}>0$.

We obtain the following results:
\beq \label{eq:to:be:tel}
e^{t+1} &\overset{\step{1}}{\leq}& \ts \sqrt{  e^{t}  w_t } \nn\\
 &\overset{\step{2}}{\leq}& \ts \sqrt{    \tfrac{\alpha}{2} (e^{t})^2 + (w_t)^2/(2\alpha) },\nn\\
 &\overset{\step{3}}{\leq}& \ts \sqrt{ \tfrac{\alpha}{2}} \cdot e^{t} + w_t \sqrt{   1 / (2\alpha)},
\eeq
\noi where step \step{1} uses $(e^{t+1})^2 \leq e^{t} (p^t-p^{t+1})$ and $w_t\triangleq p^t-p^{t+1}$; step \step{2} uses the fact that $ab\leq \frac{\alpha}{2} a^2 + \frac{1}{2\alpha}b^2$ for all $\alpha>0$; step \step{3} uses the fact that $\sqrt{a+b}\leq \sqrt{a}+\sqrt{b}$ for all $a,b\geq 0$.

Telescoping Inequality (\ref{eq:to:be:tel}) over $t$ from $i$ to $T$, we obtain:
\begin{align}
\ts \sum_{t=i}^T w_t \sqrt{ {1}/{(2\alpha)}}  \geq&~ \ts \{\sum_{t=i}^T e^{t+1} \} -\sqrt{ \tfrac{\alpha}{2}}  \{ \sum_{t=i}^T e^{t} \}  \nn\\
= &~ \ts e^{T+1}  - \sqrt{ \tfrac{\alpha}{2}}  e^i  + (1-\sqrt{ \tfrac{\alpha}{2}}  )\sum_{t=i}^{T-1} e^{t+1} \nn\\
\overset{\step{1}}{\geq}&~ \ts  - \sqrt{ \tfrac{\alpha}{2}}  e^i + (1-\sqrt{ \tfrac{\alpha}{2}} )\sum_{t=i}^{T-1} e^{t+1}, \nn
\end{align}
\noi where step \step{1} uses $e^{T+1}\geq 0$; step \step{2} uses $1-\sqrt{\frac{\alpha}{2}}  >0$. This results in:
 \beq
 \ts \sum_{t=i}^{T-1} e^{t+1} &\leq&  \ts (1-\sqrt{ \tfrac{\alpha}{2}})^{-1} \cdot \{   \sqrt{ \tfrac{\alpha}{2}}  e^i + \sqrt{   \tfrac{1}{2\alpha}}  \sum_{t=i}^T w_t  \} \nn\\
 &\overset{\step{1}}{=}& \ts  e^i  + 2 \sum_{t=i}^T w_t \nn\\
 &\overset{}{=}& \ts  e^i  + 2 (p^{i} - p^{T+1}) \nn\\
 &\overset{\step{2}}{\leq}& \ts  e^i  + 2 p^i,\nn
 \eeq
 \noi step \step{1} uses the fact that $(1-\sqrt{ \tfrac{\alpha}{2}})^{-1} \cdot  \sqrt{ \tfrac{\alpha}{2}}=1$ and $(1-\sqrt{ \tfrac{\alpha}{2}})^{-1} \cdot \sqrt{   \tfrac{1}{2\alpha}} =2$ when $\alpha=1/2$; step \step{2} uses $p^{T+1}\geq 0$. Letting $T\rightarrow \infty$, we conclude this lemma.

\end{proof}
\end{lemma}

\begin{lemma} \label{lemma:sigma:case:less:than:1:4}

Let $\{d^t\}_{t=1}^{\infty}$ be a nonnegative sequence satisfying $d^t \leq d^{t-1} - d^{t} + c \cdot t^{pu}\cdot (d^{t-1}-d^{t})^u$, where $u>0$, $c>0$, $p\in(0,1)$. Then we have: $d^t \leq  \mathcal{O}(t^{ -\zeta})$, where $\zeta=(1-p)u$.
\begin{proof}

We derive the following results:
\beq \label{eq:two:case:sep}
d^t &\leq& d^{t-1} - d^{t} + c t^{ pu} (d^{t-1}- d^{t})^u \nn\\
&\overset{\step{1}}{\leq}& 2\max(d^{t-1} - d^{t} ,c t^{ pu} (d^{t-1}- d^{t})^u),
\eeq
\noi where step \step{1} uses $a+b\leq 2\max(a,b)$ for all $a,b\geq 0$.

We now consider two cases for Inequality (\ref{eq:two:case:sep}). Case (\bfit{i}). When $d^{t-1} - d^{t}\geq c t^{ u p} (d^{t-1}- d^{t})^u$, we have:
\beq\label{eq:two:case:sep:case:1}
d^t \leq  2 (d^{t-1} - d^{t}) \leq  \tfrac{2}{3} d^{t-1} \leq  (\tfrac{2}{3})^t d^{0}.\nn
\eeq

Case (\bfit{ii}). When $d^{t-1} - d^{t}\leq c t^{pu} (d^{t-1}- d^{t})^u$, we have: $ d^t \leq 2 c t^{pu} (d^{t-1}- d^{t})^u$, leading to:
\beq \label{eq:to:be:sum:case:less:than:1:4}
(2 c)^{-1/u} \tfrac{(d^t)^{1/u}}{  t^{p} } \leq d^{t-1}- d^{t}.
\eeq

Summing over Inequality (\ref{eq:to:be:sum:case:less:than:1:4}) over $t$ from $1$ to $T$ yields:
\beq
0 &\leq&\ts \sum_{t=1}^T \{ d^{t-1}- d^{t} - (2 c)^{-1/u} \tfrac{(d^t)^{1/u}}{  t^{p} } \} \nn\\
&\overset{\step{1}}{\leq}&   d^{0} - \ts (2 c)^{-1/u} \sum_{t=1}^T \{  \tfrac{(d^t)^{1/u}}{  t^{p} } \} \nn\\
&\overset{\step{2}}{\leq}&   d^{0}  - \ts (2 c)^{-1/u} (d^T)^{1/u} \cdot \sum_{t=1}^T  \tfrac{1}{  t^{p} }  \nn\\
&\overset{\step{3}}{\leq}&   d^{0}  - \ts (2 c)^{-1/u} (d^T)^{1/u} \cdot  (1-p) T^{1-p} , \nn
\eeq
\noi where step \step{1} uses $\sum_{t=1}^T d^{t-1} - d^t = d^0-d^T \leq d^0$; step \step{2} uses the fact that $\{d^t\}_{t=1}^T$ is decreasing; step \step{3} uses Lemma \ref{lemma:lp:bounds:2} that $\sum_{t=1}^T \tfrac{1}{t^p}\geq (1-p) T^{(1-p)}$. This further leads to:
\beq
&& (2 c)^{-1/u} (d^T)^{1/u} \cdot  (1-p) T^{1-p} \leq d^0\nn\\
&\Rightarrow&  d^T  \leq  (d^0)^u (2 c) (1-p)^{-u} T^{u(p-1)} = \OO(T^{(p-1)u}).\nn
\eeq

\end{proof}

\end{lemma}

\begin{lemma} \label{lemma:sigma:case:between:1:2:to:1}

Assume that $d^{t}\leq c \cdot t^{pu} \cdot (d^{t-1}-d^{t})^u$, where $c>0$, $u,p\in(0,1)$. Then we have: $d^t \leq  \mathcal{O}(t^{-\zeta})$, where $\zeta = \tfrac{1-p}{1/u-1}$.

\begin{proof}

We define $r \triangleq \tfrac{1}{u}-1>0$, and $g(s) \triangleq  s^{-r-1}$.

From the inequality $d^{t}\leq c \cdot t^{pu} \cdot (d^{t-1}-d^{t})^u$, we obtain:
\beq \label{eq:get:first}
c^{1/u} t^{p} (d^{t-1}-d^{t})   \geq   (d^{t})^{1/u}    \overset{\step{1}}{=}   (d^{t})^{r+1} \overset{\step{2}}{=}   \tfrac{1}{g(d^{t})},
\eeq
\noi where step \step{1} uses $r+1 = \tfrac{1}{u}$; step \step{2} uses the definition of $g(s)$.

We let $\kappa>1$ be any constant and consider two cases for $g(d^{t})/g(d^{t-1})$.

\noi \textbf{Case (1)}. $g(d^{t})\leq \kappa g(d^{t-1})$. We define $f(s) \triangleq - \tfrac{1}{r} \cdot s^{-r}$. We derive:
\beq \label{eq:hhh:case:1}
1 &\overset{\step{1}}{\leq}& \ts c^{1/u} t^p \cdot (d^{t-1} - d^{t})\cdot g( d^{t})\nn\\
&\overset{\step{2}}{\leq}&\ts c^{1/u} t^p \cdot (d^{t-1} - d^{t})\cdot \kappa g( d^{t-1}) \nn\\
&\overset{\step{3}}{\leq}&\ts c^{1/u} t^p \cdot \kappa \int_{d^{t}}^{d^{t-1}} g(s) ds\nn\\
&\overset{\step{4}}{=}& \ts  c^{1/u} t^p \cdot \kappa \cdot ( f(d^{t-1}) - f(d^{t}) )\nn\\
&\overset{\step{5}}{=}& \ts  c^{1/u} t^p \cdot \kappa \cdot \tfrac{1}{r}\cdot ( [d^{t}]^{-r} - [d^{t-1}]^{-r}  ),\nn
\eeq
\noi where step \step{1} uses Inequality (\ref{eq:get:first}); step \step{2} uses $g(d^{t})\leq \kappa g(d^{t-1})$; step \step{3} uses the fact that $g(s)$ is a nonnegative and increasing function that $(a - b ) g(a) \leq \int_{b}^a g(s) ds$ for all $a,b \in [0,\infty)$; step \step{4} uses the fact that $\nabla f(s) = g(s)$; step \step{5} uses the definition of $f(\cdot)$. This leads to:
\beq \label{eq:bound:mu:1}
[d^{t}]^{-r} - [d^{t-1}]^{-r}\geq \tfrac{ r}{c^{1/u} \kappa  t^p }.
\eeq

\noi \textbf{Case (2)}. $g(d^{t})> \kappa g(d^{t-1})$. We have:
\beq\label{eq:dnu:dnu1:case:2}
g(d^{t}) > \kappa g(d^{t-1})   &\overset{\step{1}}{\Rightarrow}&   [d^{t}]^{-(r+1)} > \kappa \cdot [d^{t-1}]^{-(r+1)} \nn \\
&\overset{\step{2}}{\Rightarrow}& ([d^{t}]^{-(r+1) })^{ \tfrac{r}{r+1} } > \kappa^{\tfrac{r}{r+1} }  \cdot ([d^{t-1}]^{- (r+1)} )^{ \tfrac{r}{r+1} }  \nn \\
&\overset{}{\Rightarrow}& [d^{t}]^{-r} > \kappa^{ \tfrac{r}{r+1} } \cdot [d^{t-1}]^{-r} ,
\eeq
\noi where step \step{1} uses the definition of $g(\cdot)$; step \step{2} uses the fact that if $a>b>0$, then $a^{\dot{r}}>b^{\dot{r}}$ for any exponent $\dot{r}\triangleq \tfrac{r}{r+1} \in (0,1)$. We further derive:
\beq\label{eq:bound:mu:2}
\ts [d^{t}]^{-r} - [d^{t-1}]^{-r} &\overset{\step{1}}{\geq}& \ts  (\kappa^{ \tfrac{r}{r+1}}   - 1) \cdot [d^{t-1}]^{-r} \nn\\
&\overset{\step{2}}{\geq}&  \ts    (\kappa^{ \tfrac{r}{r+1} }   - 1) \cdot [d^{0}]^{-r},
\eeq
\noi where step \step{1} uses Inequality (\ref{eq:dnu:dnu1:case:2}); step \step{2} uses $r>0$ and $d^{t-1}\leq d^0$ for all $t$.

In view of Inequalities (\ref{eq:bound:mu:1}) and (\ref{eq:bound:mu:2}), we have:
\beq
[d^{t}]^{-r} - [d^{t-1}]^{-r} &\geq& \ts \min(\tfrac{\kappa^{-1} r}{ c^{1/u} t^p},(\kappa^{ \tfrac{r}{r+1} }   - 1) \cdot [d^{0}]^{-r})\nn\\
&=&\ts \OO(\tfrac{1}{t^p}) . \label{eq:to:be:tel:dd}
\eeq

We now focus on Inequality (\ref{eq:to:be:tel:dd}). Telescoping Inequality (\ref{eq:to:be:tel:dd}) over $t= \{1,2,\ldots,T\}$, we have:
\beq
\ts [d^{T}]^{-r} - [d^0]^{-r}  \geq  \ts    \OO(\sum_{t=1}^{T} \tfrac{1}{t^p})  \overset{\step{1}}{=}   \mathcal{O}( (1-p)T^{1-p} ) = \mathcal{O}(T^{1-p} ), \nn
\eeq
\noi where step \step{1} use Lemma \ref{lemma:lp:bounds:2}. This leads to:
\beq
d^T =  ([d^T]^{-r})^{-1/r} \leq \mathcal{O}(T^{1-p})^{-1/r} = \mathcal{O}(T^{-\zeta}).\nn
\eeq

\end{proof}

\end{lemma}

\begin{lemma}\label{lemma:sigma:case:between:1:4:to:1/2}

Assume that $d^t/d^{t-1} \leq \tfrac{ c t^{q} }{ c t^{q} + 1}$, where $c\geq0$ and $q\in(0,1)$. Then we have: $d^{t}\leq \mathcal{O} ({1}/{\exp(t^\zeta)})$, where $\zeta=1-q$.

\begin{proof}

We define $\gamma^t\triangleq \tfrac{1}{  c t^{q} + 1} \in (0,1)$.

First, we derive the following results:
\beq \label{eq:sum:gamma}
\ts \sum_{t=1}^T \gamma^t &=& \ts \sum_{t=1}^T \tfrac{1}{  c t^{q} + 1} \nn\\
&\overset{\step{1}}{\geq}&\ts \tfrac{1}{1+c}  \sum_{t=1}^T \tfrac{1}{ t^{q} } \nn\\
&\overset{\step{2}}{\geq}&\ts \tfrac{1}{1+c}  (1-q) T^{(1-q)} = \OO(T^{1-q}),
\eeq
\noi where step \step{1} uses $c t^q+1\leq (1+c) t^q$ since $t^q\geq 1$ if $t\geq 1$; step \step{2} uses Lemma \ref{lemma:lp:bounds:2} that $(1-p) T^{(1-p)}\leq \sum_{t=1}^T \tfrac{1}{t^p}$ for all $p\in(0,1)$.

Second, noticing that $\tfrac{d^{t}}{d^{t-1}} = \tfrac{ c t^{q} }{ c t^{q} + 1} = 1 - \gamma^t$, we have:
\beq\label{eq:2j:2j}
\tfrac{d^{T}}{d^{0}} &\leq& (1-\gamma^{1}) (1-\gamma^{2}) (1-\gamma^{3}) \ldots (1-\gamma^{T}).
\eeq
This further leads to:
\beq
\ts d^{T} & = & \ts \exp(\log( d^{T} )) \nn\\
&\overset{\step{1}}{\leq}&  \ts \exp(\log( d^0 \cdot \prod_{t=1}^{T} (1-\gamma^{ t}) )) \nn\\
&\overset{\step{2}}{=}& \ts \exp( \log(d^0) + \sum_{t=1}^{T} \log (1 -\gamma^{ t} ) ) \nn\\
&\overset{\step{3}}{\leq}&\ts \exp( \log(d^0) + \sum_{t=1}^{T} ( -\gamma^{ t} )) \nn\\
&\overset{\step{4}}{\leq}&\ts \exp( \log(d^0)) \times \{\exp ( \sum_{t=1}^{T}  ( \gamma^{ t} )) \}^{-1} \nn\\
&\overset{\step{5}}{\leq}&\ts d^0 \times \{\exp ( \OO(T^{(1-q)})  ) \}^{-1}  =  \mathcal{O}( {1}/{\exp ( T^{1-q})} ) ,\nn
\eeq
\noi where step \step{1} uses Inequality (\ref{eq:2j:2j}); step \step{2} uses $\log(a b)=\log(a)+\log(b)$ for all $a,b>0$; step \step{3} uses $\log(1-x)\leq -x$ for all $x\in(0,1)$, and $1-\gamma^{t}\in(0,1)$ for all $t$; step \step{4} uses $\exp(a + b) = \exp(a) \exp(b)$ for all $a,b>0$; step \step{5} uses Inequality (\ref{eq:sum:gamma}).

%
%


\end{proof}

\end{lemma}

\section{Proofs for Section \ref{sect:preli}}\label{app:sect:preli}
\subsection{Proof of Lemma \ref{lemma:mu:continous}}\label{app:lemma:mu:continous}

\begin{proof}

Assume $0<\mu_2<\mu_1<\frac{1}{W_h}$, and fixing $\y\in\Rn^{m}$.

We define $h_{\mu_1}(\y)\triangleq \min_{\v} h(\v) + \frac{1}{2\mu_1}\|\v-\y\|_{2}^2$, and $\prox_{\mu_1}(\y) = \ts\arg \min_{\v}\,h(\v) + \tfrac{1}{2\mu_1} \|\v - \y\|_2^2$.

We define $h_{\mu_2}(\y)\triangleq \min_{\v} h(\v) + \frac{1}{2\mu_2}\|\v-\y\|_{2}^2$, and $\prox_{\mu_2}(\y) = \ts \arg \min_{\v}\,h(\v) + \tfrac{1}{2\mu_2} \|\v - \y\|_2^2$.

\noi By the optimality of $\prox_{\mu_1}(\y)$ and $\prox_{\mu_2}(\y)$, we obtain:
\beq
\ts\y - \prox_{\mu_1}(\y)  &\in& \ts\mu_1 \partial h(\prox_{\mu_1}(\y)) , \label{eq:optimality:mu:1}\\
\ts\y - \prox_{\mu_2}(\y)  &\in& \ts\mu_2  \partial h(\prox_{\mu_2}(\y)) .\label{eq:optimality:mu:2}
\eeq

\noi \textbf{Part (a)}. We now prove that $0 \leq h_{\mu_2}(\y) - h_{\mu_1}(\y)$. For any $\s_1 \in \partial h(\prox_{\mu_1}(\y))$ and $\s_2\in \partial h(\prox_{\mu_2}(\y))$, we have:
\beq
&& h_{\mu_1}(\y) - h_{\mu_2}(\y)\nn\\
&\overset{\step{1}}{=}& \tfrac{1}{2\mu_1} \|\y -  \prox_{\mu_1}(\y) \|_{2}^2 - \tfrac{1}{2\mu_2} \|\y - \prox_{\mu_2}(\y) \|_{2}^2 + h ( \prox_{\mu_1}(\y)) - h(\prox_{\mu_2}(\y))  \nn\\
&\overset{\step{2}}{\leq}& \tfrac{1}{2\mu_1}\| \y -  \prox_{\mu_1}(\y) \|_{2}^2 - \tfrac{1}{2\mu_2}\| \y -  \prox_{\mu_2}(\y)\|_{2}^2 + \la \prox_{\mu_1}(\y)-\prox_{\mu_2}(\y),\s_1\ra+\tfrac{W_h}{2}\|\prox_{\mu_2}(\y) - \prox_{\mu_1}(\y) \|_{2}^2 \nn\\
&\overset{\step{3}}{=}& \tfrac{1}{2\mu_1}\| \mu_1 \s_1 \|_{2}^2 - \tfrac{1}{2\mu_2}\| \mu_2 \s_2 \|_{2}^2 + \la    \mu_2 \s_2 -\mu_1 \s_1 ,\s_1\ra+\tfrac{W_h}{2}\| \mu_1 \s_1 -\mu_2 \s_2 \|_{2}^2 \nn\\
&\overset{\step{4}}{\leq}& \tfrac{1}{2\mu_1}\| \mu_1 \s_1 \|_{2}^2 - \tfrac{1}{2\mu_2}\| \mu_2 \s_2 \|_{2}^2 + \la    \mu_2 \s_2 -\mu_1 \s_1 ,\s_1\ra+\tfrac{1}{2 \mu_1}\| \mu_1 \s_1 -\mu_2 \s_2 \|_{2}^2 \nn\\
&\overset{}{=}& - \tfrac{\mu_2}{2} \|\s_2 \|_{2}^2 \cdot (1 - \tfrac{\mu_2}{\mu_1} )  \nn\\
&\overset{\step{5}}{\leq}&    0, \nn
\eeq
\noi where step \step{1} uses the definition of $h_{\mu_1}(\y)$ and $h_{\mu_2}(\y)$; step \step{2} uses weakly convexity of $h(\cdot)$;  step \step{3} uses the optimality of $\prox_{\mu_1}(\y)$ and $\prox_{\mu_2}(\y)$ in Equations (\ref{eq:optimality:mu:1}) and (\ref{eq:optimality:mu:2}); step \step{4} uses $W_h\leq \tfrac{1}{\mu_1}$; step \step{5} uses $1\geq \tfrac{\mu_2}{\mu_1}$.

\noi \textbf{Part (b)}. We now prove that $h_{\mu_2}(\y) - h_{\mu_1}(\y) \leq \min\{  \tfrac{\mu_1}{2 \mu_2}, 1  \} \cdot (\mu_1-\mu_2) C_h^2$. For any $\s_1 \in \partial h(\prox_{\mu_1}(\y))$ and $\s_2\in \partial h(\prox_{\mu_2}(\y))$, we have:
\beq
&& h_{\mu_2}(\y) - h_{\mu_1}(\y) \nn\\
 &\overset{\step{1}}{=}&  \tfrac{1}{2\mu_2} \|\y -  \prox_{\mu_2}(\y) \|_{2}^2 - \tfrac{1}{2\mu_1} \|\y - \prox_{\mu_1}(\y) \|_{2}^2   + h(\prox_{\mu_2}(\y)) - h(\prox_{\mu_1}(\y)) \nn\\
&\overset{\step{2}}{\leq}&   \tfrac{1}{2\mu_2} \|\y -  \prox_{\mu_2}(\y) \|_{2}^2 - \tfrac{1}{2\mu_1} \|\y - \prox_{\mu_1}(\y) \|_{2}^2 + \la \prox_{\mu_2}(\y) - \prox_{\mu_1}(\y) ,  \s_1\ra + \tfrac{W_h}{2} \| \prox_{\mu_2}(\y) -\prox_{\mu_1}(\y) \|_{2}^2  \nn\\
&\overset{\step{3}}{=}&  \tfrac{\mu_2}{2} \| \s_1 \|_{2}^2 - \tfrac{\mu_1}{2} \|\s_2\|_{2}^2 + \la \mu_1 \s_2- \mu_2 \s_1,  \s_1\ra  + \tfrac{W_h}{2} \| \mu_1 \s_2-\mu_2 \s_1\|_{2}^2    \nn\\
&=&  - \tfrac{\mu_2}{2} \|\s_1\|_{2}^2 - \tfrac{\mu_1}{2} \| \s_2\|_{2}^2  + \mu_1\la \s_1, \s_2\ra  + \tfrac{W_h}{2} \|\mu_1 \s_2-\mu_2 \s_1 \|_{2}^2  \nn\\
&\overset{\step{4}}{\leq }&  \min\{ - \tfrac{\mu_1}{2} \| \s_2\|_{2}^2  + \mu_1\la \s_1, \s_2\ra  + \tfrac{1}{2\mu_2} \|\mu_1 \s_2-\mu_2 \s_1 \|_{2}^2 - \tfrac{\mu_2}{2} \|\s_1\|_{2}^2,\nn\\
&& ~~~~~~~~- \tfrac{\mu_1}{2} \| \s_2\|_{2}^2  + \mu_1\la \s_1, \s_2\ra  + \tfrac{1}{2\mu_1} \|\mu_1 \s_2-\mu_2 \s_1 \|_{2}^2 - \tfrac{\mu_2}{2} \|\s_1\|_{2}^2\} \nn\\
&\overset{}{=}&  \min\{  (-\mu_2+\mu_1 )\cdot \tfrac{\mu_1}{2 \mu_2} \| \s_2\|_{2}^2, (\mu_1-\mu_2)\la \s_1, \s_2\ra   - \tfrac{\mu_2}{2} \|\s_1\|_{2}^2  + \tfrac{\mu_2^2}{2\mu_1} \| \s_1 \|_{2}^2  \} \nn\\
&\overset{\step{5}}{\leq}&  \min\{  \tfrac{\mu_1}{2 \mu_2} \| \s_2\|_{2}^2\cdot  (\mu_1-\mu_2 ) , (\mu_1-\mu_2)\la \s_1, \s_2\ra     \} \nn\\
&\overset{\step{6}}{\leq}&  \min\{  \tfrac{\mu_1}{2 \mu_2} \cdot  (\mu_1-\mu_2 ) , (\mu_1-\mu_2)   \} \cdot C_h^2\nn\\
&\overset{}{=}&  \min\{  \tfrac{\mu_1}{2 \mu_2}, 1  \} \cdot (\mu_1-\mu_2)\cdot C_h^2,\nn
\eeq
\noi where step \step{1} uses the definition of $h_{\mu_1}(\y)$ and $h_{\mu_2}(\y)$; step \step{2} uses the weakly convexity of $h(\cdot)$; step \step{3} uses the optimality of $\prox_{\mu_2}(\y)$ and $\prox_{\mu_1}(\y)$ in Equations (\ref{eq:optimality:mu:1}) and (\ref{eq:optimality:mu:2}); step \step{4} uses $W_h\leq \frac{1}{\mu_1}$ and $W_h\leq \frac{1}{\mu_2}$; step \step{5} uses $\mu_2\leq\mu_1$; step \step{6} uses $\|\s_1\|\leq C_h$, $\|\s_2\|\leq C_h$, and $\la\s_1,\s_2\ra\leq \|\s_1\|\cdot\|\s_2\|\leq C_h^2$.

\end{proof}

\subsection{Proof of Lemma \ref{lemma:mu:continous}}\label{app:lemma:mu:continous}

\begin{proof}

Assume $0<\mu_2<\mu_1<\frac{1}{W_h}$, and fixing $\y\in\Rn^{m}$.

We define $h_{\mu_1}(\y)\triangleq \min_{\v} h(\v) + \frac{1}{2\mu_1}\|\v-\y\|_{2}^2$, and $\prox_{\mu_1}(\y) = \ts\arg \min_{\v}\,h(\v) + \tfrac{1}{2\mu_1} \|\v - \y\|_2^2$.

We define $h_{\mu_2}(\y)\triangleq \min_{\v} h(\v) + \frac{1}{2\mu_2}\|\v-\y\|_{2}^2$, and $\prox_{\mu_2}(\y) = \ts \arg \min_{\v}\,h(\v) + \tfrac{1}{2\mu_2} \|\v - \y\|_2^2$.

\noi By the optimality of $\prox_{\mu_1}(\y)$ and $\prox_{\mu_2}(\y)$, we obtain:
\beq
\ts\y - \prox_{\mu_1}(\y)  &\in& \ts\mu_1 \partial h(\prox_{\mu_1}(\y)) \label{eq:optimality:mu:1}\\
\ts\y - \prox_{\mu_2}(\y)  &\in& \ts\mu_2  \partial h(\prox_{\mu_2}(\y)) .\label{eq:optimality:mu:2}
\eeq

\noi \textbf{Part (a)}. We now prove that $0 \leq h_{\mu_2}(\y) - h_{\mu_1}(\y)$. For any $\s_1 \in \partial h(\prox_{\mu_1}(\y))$ and $\s_2\in \partial h(\prox_{\mu_2}(\y))$, we have:
\beq
&& h_{\mu_1}(\y) - h_{\mu_2}(\y)\nn\\
&\overset{\step{1}}{=}& \tfrac{1}{2\mu_1} \|\y -  \prox_{\mu_1}(\y) \|_{2}^2 - \tfrac{1}{2\mu_2} \|\y - \prox_{\mu_2}(\y) \|_{2}^2 + h ( \prox_{\mu_1}(\y)) - h(\prox_{\mu_2}(\y))  \nn\\
&\overset{\step{2}}{\leq}& \tfrac{1}{2\mu_1}\| \y -  \prox_{\mu_1}(\y) \|_{2}^2 - \tfrac{1}{2\mu_2}\| \y -  \prox_{\mu_2}(\y)\|_{2}^2 + \la \prox_{\mu_1}(\y)-\prox_{\mu_2}(\y),\s_1\ra+\tfrac{W_h}{2}\|\prox_{\mu_2}(\y) - \prox_{\mu_1}(\y) \|_{2}^2 \nn\\
&\overset{\step{3}}{=}& \tfrac{1}{2\mu_1}\| \mu_1 \s_1 \|_{2}^2 - \tfrac{1}{2\mu_2}\| \mu_2 \s_2 \|_{2}^2 + \la    \mu_2 \s_2 -\mu_1 \s_1 ,\s_1\ra+\tfrac{W_h}{2}\| \mu_1 \s_1 -\mu_2 \s_2 \|_{2}^2 \nn\\
&\overset{\step{4}}{\leq}& \tfrac{1}{2\mu_1}\| \mu_1 \s_1 \|_{2}^2 - \tfrac{1}{2\mu_2}\| \mu_2 \s_2 \|_{2}^2 + \la    \mu_2 \s_2 -\mu_1 \s_1 ,\s_1\ra+\tfrac{1}{2 \mu_1}\| \mu_1 \s_1 -\mu_2 \s_2 \|_{2}^2 \nn\\
&\overset{}{=}& - \tfrac{\mu_2}{2} \|\s_2 \|_{2}^2 \cdot (1 - \tfrac{\mu_2}{\mu_1} )  \nn\\
&\overset{\step{5}}{\leq}&    0, \nn
\eeq
\noi where step \step{1} uses the definition of $h_{\mu_1}(\y)$ and $h_{\mu_2}(\y)$; step \step{2} uses weakly convexity of $h(\cdot)$;  step \step{3} uses the optimality of $\prox_{\mu_1}(\y)$ and $\prox_{\mu_2}(\y)$ in Equations (\ref{eq:optimality:mu:1}) and (\ref{eq:optimality:mu:2}); step \step{4} uses $W_h\leq \tfrac{1}{\mu_1}$; step \step{5} uses $1\geq \tfrac{\mu_2}{\mu_1}$.

\noi \textbf{Part (b)}. We now prove that $h_{\mu_2}(\y) - h_{\mu_1}(\y) \leq \min\{  \tfrac{\mu_1}{2 \mu_2}, 1  \} \cdot (\mu_1-\mu_2) C_h^2$. For any $\s_1 \in \partial h(\prox_{\mu_1}(\y))$ and $\s_2\in \partial h(\prox_{\mu_2}(\y))$, we have:
\beq
&& h_{\mu_2}(\y) - h_{\mu_1}(\y) \nn\\
 &\overset{\step{1}}{=}&  \tfrac{1}{2\mu_2} \|\y -  \prox_{\mu_2}(\y) \|_{2}^2 - \tfrac{1}{2\mu_1} \|\y - \prox_{\mu_1}(\y) \|_{2}^2   + h(\prox_{\mu_2}(\y)) - h(\prox_{\mu_1}(\y)) \nn\\
&\overset{\step{2}}{\leq}&   \tfrac{1}{2\mu_2} \|\y -  \prox_{\mu_2}(\y) \|_{2}^2 - \tfrac{1}{2\mu_1} \|\y - \prox_{\mu_1}(\y) \|_{2}^2 + \la \prox_{\mu_2}(\y) - \prox_{\mu_1}(\y) ,  \s_1\ra + \tfrac{W_h}{2} \| \prox_{\mu_2}(\y) -\prox_{\mu_1}(\y) \|_{2}^2  \nn\\
&\overset{\step{3}}{=}&  \tfrac{\mu_2}{2} \| \s_1 \|_{2}^2 - \tfrac{\mu_1}{2} \|\s_2\|_{2}^2 + \la \mu_1 \s_2- \mu_2 \s_1,  \s_1\ra  + \tfrac{W_h}{2} \| \mu_1 \s_2-\mu_2 \s_1\|_{2}^2    \nn\\
&=&  - \tfrac{\mu_2}{2} \|\s_1\|_{2}^2 - \tfrac{\mu_1}{2} \| \s_2\|_{2}^2  + \mu_1\la \s_1, \s_2\ra  + \tfrac{W_h}{2} \|\mu_1 \s_2-\mu_2 \s_1 \|_{2}^2  \nn\\
&\overset{\step{4}}{\leq }&  \min\{ - \tfrac{\mu_1}{2} \| \s_2\|_{2}^2  + \mu_1\la \s_1, \s_2\ra  + \tfrac{1}{2\mu_2} \|\mu_1 \s_2-\mu_2 \s_1 \|_{2}^2 - \tfrac{\mu_2}{2} \|\s_1\|_{2}^2,\nn\\
&& ~~~~~~~~- \tfrac{\mu_1}{2} \| \s_2\|_{2}^2  + \mu_1\la \s_1, \s_2\ra  + \tfrac{1}{2\mu_1} \|\mu_1 \s_2-\mu_2 \s_1 \|_{2}^2 - \tfrac{\mu_2}{2} \|\s_1\|_{2}^2\} \nn\\
&\overset{}{=}&  \min\{  (-\mu_2+\mu_1 )\cdot \tfrac{\mu_1}{2 \mu_2} \| \s_2\|_{2}^2, (\mu_1-\mu_2)\la \s_1, \s_2\ra   - \tfrac{\mu_2}{2} \|\s_1\|_{2}^2  + \tfrac{\mu_2^2}{2\mu_1} \| \s_1 \|_{2}^2  \} \nn\\
&\overset{\step{5}}{\leq}&  \min\{  \tfrac{\mu_1}{2 \mu_2} \| \s_2\|_{2}^2\cdot  (\mu_1-\mu_2 ) , (\mu_1-\mu_2)\la \s_1, \s_2\ra     \} \nn\\
&\overset{\step{6}}{\leq}&  \min\{  \tfrac{\mu_1}{2 \mu_2} \cdot  (\mu_1-\mu_2 ) , (\mu_1-\mu_2)   \} \cdot C_h^2\nn\\
&\overset{}{=}&  \min\{  \tfrac{\mu_1}{2 \mu_2}, 1  \} \cdot (\mu_1-\mu_2)\cdot C_h^2,\nn
\eeq
\noi where step \step{1} uses the definition of $h_{\mu_1}(\y)$ and $h_{\mu_2}(\y)$; step \step{2} uses the weakly convexity of $h(\cdot)$; step \step{3} uses the optimality of $\prox_{\mu_2}(\y)$ and $\prox_{\mu_1}(\y)$ in Equations (\ref{eq:optimality:mu:1}) and (\ref{eq:optimality:mu:2}); step \step{4} uses $W_h\leq \frac{1}{\mu_1}$ and $W_h\leq \frac{1}{\mu_2}$; step \step{5} uses $\mu_2\leq\mu_1$; step \step{6} uses $\|\s_1\|\leq C_h$, $\|\s_2\|\leq C_h$, and $\la\s_1,\s_2\ra\leq \|\s_1\|\cdot\|\s_2\|\leq C_h^2$.

\end{proof}

\subsection{Proof of Lemma \ref{lemma:lip:mu}}\label{app:lemma:lip:mu}

\begin{proof}

Assume $0<\mu_2<\mu_1 \leq \tfrac{1}{2 W_h}$, and fixing $\y\in\Rn^{m}$.

Using the result in Lemma \ref{lemma:well:knwon:3}, we establish that the gradient of $h_{\mu}(\y)$ \textit{w.r.t} $\y$ can be computed as:
\beq
\nabla h_{\mu}(\y) = \mu^{-1}(\y-\prox_{\mu}(\y)).\nn
\eeq
\noi The gradient of the mapping $\nabla h_{\mu}(\y)$ \textit{w.r.t.} the variable $1/\mu$ can be computed as: $\nabla_{1/\mu} \left(\nabla h_{\mu}(\y)\right) =\y-\prox_{\mu}(\y)$. We further obtain:
\beq
\|\nabla_{1/\mu} \left(\nabla h_{\mu}(\y)\right)\| = \|\y-\prox_{\mu}(\y)\|  \overset{\step{1}}{=} \mu \| \partial h(\prox_{\mu}(\y)) \| \leq \mu C_h.\nn
\eeq
Here, step \step{1} uses the optimality of $\prox_{\mu}(\y)$ that: $\zero \in \partial h( \prox_{\mu}(\y) ) + \tfrac{1}{\mu} ( \prox_{\mu}(\y) - \y)$. Therefore, for all $\mu\in(0,\tfrac{1}{2 W_h}]$, we have:
\beq
\frac{\|\nabla h_{\mu}(\y) - \nabla h_{\mu'}(\y) \|_{2}}{|1/\mu-1/\mu'|} \leq \mu C_h.\nn
\eeq
\noi Letting $\mu=\mu_1$ and $\mu'=\mu_2$, we have: $\|\nabla h_{\mu_1}(\y) - \nabla h_{\mu_2}(\y) \|_{2} \leq |1 - \mu_1 /\mu_2| C_h = (\mu_1 /\mu_2-1)C_h$.

    \end{proof}

\subsection{Proof of Lemma \ref{lemma:smoothing:problem:prox}}\label{app:lemma:smoothing:problem:prox}

\begin{proof}


We consider the following optimization problem:
\beq\label{eq:subprob:y:lemma:0}
\bar{\y} = \arg \min_{\y} h_{\mu}(\y)  + \tfrac{\beta}{2}\| \y - \mathbf{b}\|_{2}^2.
\eeq
\noi Given $h_{\mu}(\y)$ being $(\mu^{-1})$-weakly convex and $\beta>\mu^{-1}$, Problem (\ref{eq:subprob:y:lemma:0}) becomes strongly convex and has a unique optimal solution, which leads to the following equivalent problem:
\beq
\ts (\bar{\y},\breve{\y})= \arg \min_{\y,\y'} h(\y') + \tfrac{1}{2\mu} \|\y-\y'\|_{2}^2 + \tfrac{ \beta}{2}\|\y-\mathbf{b} \|_{2}^2, \label{eq:subprob:y:lemma}\nn
\eeq
\noi
We have the following first-order optimality conditions for $(\bar{\y},\breve{\y})$:
\beq
\tfrac{1}{\mu} (\bar{\y} -\breve{\y}) &=& \beta (\mathbf{b}-\bar{\y} )  \label{eq:gUY:0}\\
\tfrac{1}{\mu} (\bar{\y}-\breve{\y})  &\in&  \partial h(\breve{\y}). \label{eq:gUY}
\eeq

\textbf{Part (a)}. We have the following results:
\beq \label{eq:inclusion:hyb}
\zero &\overset{\step{1}}{\in}& \partial h(\breve{\y}) + \tfrac{1}{\mu} ( \breve{\y} - \bar{\y}) \nn\\
&\overset{\step{2}}{=}&  \partial h(\breve{\y}) + \tfrac{1}{\mu} (\breve{\y} -  \tfrac{1}{ 1/\mu+\beta} ( \tfrac{1}{\mu} \breve{\y}  + \beta \mathbf{b} ) ) \nn\\
&\overset{}{=}&  \partial h(\breve{\y}) +\tfrac{\beta}{1+\mu\beta} (\breve{\y} - \mathbf{b}),
\eeq
\noi where step \step{1} uses Equality (\ref{eq:gUY}); step \step{2} uses Equality (\ref{eq:gUY:0}) that $\bar{\y} = \tfrac{1}{ 1/\mu +\beta} ( \tfrac{1}{\mu} \breve{\y}  + \beta \mathbf{b} )$. The inclusion in (\ref{eq:inclusion:hyb}) implies that:
\beq
\breve{\y} = \arg \min_{\v} h(\breve{\y}) + \tfrac{1}{2 } \cdot \tfrac{\beta}{1+\mu\beta} \|\breve{\y}-\mathbf{b}\|_{2}^2.\nn
\eeq

\textbf{Part (b)}. Combining Equalities (\ref{eq:gUY:0}) and (\ref{eq:gUY}), we have: $\beta (\mathbf{b}-\bar{\y}) \in \partial h(\breve{\y})$.

\textbf{Part (c)}. In view of Equation (\ref{eq:gUY}), we have: $\bar{\y}-\breve{\y} = \mu \partial h(\breve{\y})$, leading to: $\|\breve{\y} - \bar{\y}\| \leq \mu C_h$.

\end{proof}

\subsection{Proofs for Lemma \ref{lemma:bound:PXX:XX}} \label{app:lemma:bound:PXX:XX}

\begin{proof}

We let $\Deltas \in \Rn^{n\times r}$ and $\X\in\MM$. We define $\mathbf{U}\triangleq \Deltas\trans \X \in \Rn^{r\times r}$.

We derive the following results:
\beq
&&\|\Proj_{\T_{\X}\MM}(\Deltas)\|_{\fro}^2 - \|\Deltas\|_{\fro}^2\nn\\
&\overset{\step{1}}{=} & \|  \Deltas - \tfrac{1}{2}\X (\Deltas\trans \X + \X \trans  \Deltas) \|_{\fro}^2 - \|\Deltas\|_{\fro}^2\nn\\
&\overset{}{=} &\tfrac{1}{4}\|\X (\Deltas\trans \X + \X \trans  \Deltas)\|_{\fro}^2 - \la \Deltas, \X (\Deltas\trans \X + \X \trans  \Deltas)\ra     \nn\\
&\overset{\step{2}}{=} &\tfrac{1}{4}\|\Deltas\trans \X + \X \trans  \Deltas\|_{\fro}^2 - \la \Deltas, \X (\Deltas\trans \X + \X \trans  \Deltas)\ra    \nn\\
&\overset{\step{3}}{=} & \tfrac{1}{4}\|\mathbf{U} + \mathbf{U} \trans \|_{\fro}^2 - \la \mathbf{U} + \mathbf{U}\trans,\mathbf{U} \ra   \nn\\
&\overset{\step{4}}{=} &\tfrac{1}{4}\|\mathbf{U} + \mathbf{U} \trans \|_{\fro}^2 -  \la \mathbf{U} + \mathbf{U} \trans, \mathbf{U} + \mathbf{U}\trans \ra \cdot \tfrac{1}{2} \nn\\
&\overset{}{=} & - \tfrac{1}{4}\|\mathbf{U} + \mathbf{U} \trans \|_{\fro}^2 \leq 0, \nn
\eeq
\noi where step \step{1} uses $\Proj_{\T_{\X}\MM}(\Deltas) = \Deltas - \tfrac{1}{2}\X (\Deltas \trans \X+\X\trans \Deltas)$ for all $\Deltas \in \Rn^{n\times r}$ \cite{absil2008optimization}; step \step{2} uses the fact that $\|\X \mathbf{P}\|_{\fro}^2 = \tr(\mathbf{P}\X\trans \X \mathbf{P}\trans) = \|\mathbf{P}\|_{\fro}^2$ for all $\X\in\MM$; step \step{3} uses the definition of $\mathbf{U}\triangleq \Deltas\trans \X$; step \step{4} uses the symmetric properties of the matrix $(\mathbf{U} + \mathbf{U}\trans)$.

\end{proof}

\subsection{Proof of Lemma \ref{lemma:GD:bound}}
\label{app:lemma:GD:bound}

\begin{proof}

We let $\rho>0$, $\G\in\Rn^{n\times r}$, and $\X\in\MM$.

We define $\U \triangleq \G\trans\X$, and $\GGG_{\rho} \triangleq \G-\rho\X\G\trans\X - (1-\rho)\X\X\trans\G$.

First, we have the following equalities:
\beq \label{eq:GD}
\la \G,\GGG_{\rho}\ra & = &\la \G,\G-\rho\X\G\trans\X - (1-\rho)\X\X\trans\G\ra \nn\\
&\overset{}{=}& \la \G,\G \ra - \rho \tr(\G\trans \X\G\trans \X ) - (1-\rho) \tr(\G\trans \X\X\trans \G)   \nn\\
&\overset{\step{1}}{=}& \la \G,\G \ra - \rho \tr(\U \U) - (1-\rho) \tr(\U\U\trans)  ,
\eeq
\noi where step \step{1} uses $\U \triangleq \G\trans\X$.

Second, we derive the following equalities:
\beq\label{eq:DD}
\|\GGG_{\rho}\|_{\fro}^2 & = &\la \rho\X\G\trans\X + (1-\rho)\X\X\trans\G -\G,\rho\X\G\trans\X + (1-\rho)\X\X\trans\G -\G\ra \nn\\
&\overset{\step{1}}{=}& \rho^2 \tr(\U\trans \U ) +\rho (1-\rho) \tr(\U\trans \U\trans ) - \rho \tr( \U\trans \U\trans)  \nn\\
&   & + (1-\rho) \rho \tr( \U \U  ) + (1-\rho)^2 \tr( \U \U\trans ) - (1-\rho) \tr(  \U \U\trans  )  \nn\\
&   & - \rho \tr( \U  \U )  - (1-\rho) \tr( \U  \U\trans  ) + \la \G,\G \ra \nn\\
&\overset{\step{2}}{=}&  ( 2\rho^2 -  1 )  \cdot \tr(\U\trans \U ) - 2\rho^2 \cdot \tr(\U \U )  + \la \G,\G \ra,
\eeq
\noi where step \step{1} uses $\U \triangleq \G\trans\X$ and $\X\trans \X=\I_r$; step \step{2} uses $\tr(\U\trans \U\trans)=\tr(\U\U)$.

Third, we have:
\beq \label{eq:GU}
 \tr(\G\trans \G) - \tr(\U\trans \U) \overset{\step{1}}{=}  \la  \G\G\trans,\I_n - \X\X\trans \ra \overset{\step{2}}{\geq} 0,
\eeq
\noi where step \step{1} uses $\U \triangleq \G\trans\X$; step \step{2} uses the fact that the matrix $(\I_n-\X\X\trans)$ only contains eigenvalues that are $0$ or $1$.

\textbf{Part (a-i)}. We now prove that $\max(1,2\rho) \la \G,\GGG_{\rho}\ra \geq \|\GGG_{\rho}\|_{\fro}^2$. We discuss two cases. Case (\bfit{i}): $\rho \in (0,\frac{1}{2}]$. We have:
$$\|\GGG_{\rho}\|_{\fro}^2 - \la \G,\GGG_{\rho}\ra \overset{\step{1}}{=}  (2\rho^2-\rho ) \cdot (   \tr(\U\U\trans)-\tr(\U \U))   \overset{\step{2}}{\leq } 0,$$
\noi where step \step{1} uses Inequalities (\ref{eq:GD}) and (\ref{eq:DD}); step \step{2} uses $2\rho^2-\rho \leq 0$ for all $\rho \in (0,\frac{1}{2}]$, and $\tr(\U \U) \leq \tr(\U\U\trans)$ for all $\U\in \Rn^{r\times r}$. \\
\noi Case (\bfit{ii}): $\rho \in [\frac{1}{2},\infty)$. We have: $$\ts \|\GGG_{\rho}\|_{\fro}^2 -  2 \rho \la  \G ,\GGG_{\rho}\ra\overset{\step{1}}{=} ( 2 \rho- 1) (\tr(\U\U\trans) - \la \G,\G \ra)  \overset{\step{2}}{\leq} 0,$$
\noi where step \step{1} uses Inequalities (\ref{eq:GD}) and (\ref{eq:DD}); step \step{2} uses $2 \rho- 1\geq 0$ for all $\rho \in [\frac{1}{2},\infty)$, and Inequality(\ref{eq:GU}). Therefore, we conclude that: $\max(1,2\rho) \la \G,\GGG_{\rho}\ra \geq \|\GGG_{\rho}\|_{\fro}^2$.

\textbf{Part (a-ii)}. We now prove that $\|\GGG_{\rho}\|_{\fro}^2\geq \min(1,\rho^2)\|\GGG_{1}\|_{\fro}^2$. We consider two cases. Case (\bfit{i}): $\rho\in(0,1]$. We have:
$$\ts \rho^2 \|\GGG_{1}\|_{\fro}^2 -  \|\GGG_{\rho}\|^2_{\fro}\overset{\step{1}}{=}     \ts (1- \rho^2 ) (\tr(\U\trans \U ) -\la \G,\G \ra )\overset{\step{2}}{\leq} 0,$$ 
\noi where step \step{1} uses Inequalities (\ref{eq:GD}) and (\ref{eq:DD}); step \step{2} uses $1-\rho^2\geq0$, and Inequality (\ref{eq:GU}). \\
Case (\bfit{ii}): $\rho\in(1,\infty)$. We have:
$$\|\GGG_{1}\|_{\fro}^2 -  \|\GGG_{\rho}\|^2_{\fro}\overset{\step{1}}{=}  (2 - 2\rho^2) (\tr(\U\trans \U ) - \tr(\U \U )) \overset{}{\leq} 0,$$ 
\noi where step \step{1} uses Inequality (\ref{eq:DD}); step \step{2} uses $4 \rho^2 - 1\leq 0$ for all $\rho\in(0,\frac{1}{2}]$, and the fact that $\tr(\U \U) - \tr(\U\U\trans) \leq 0$ for all $\U\in \Rn^{r\times r}$. Therefore, we conclude that: $\min(1,\rho^2) \|\GGG_{1}\|_{\fro}^2 \leq \|\GGG_{\rho}\|_{\fro}^2$.

\textbf{Part (b-i)}. We now prove that $\|\GGG_{\rho}\|_{\fro} \geq \min(1,2 \rho) \|\GGG_{1/2}\|_{\fro}$. We consider two cases. Case (\bfit{i}): $\rho\in(0,\frac{1}{2}]$. We have:
$$(2\rho)^2 \|\GGG_{1/2}\|_{\fro}^2 - \|\GGG_{\rho}\|_{\fro}^2 \overset{\step{1}}{=}  ( 4 \rho^2 - 1) \cdot ( \tr(\G\trans \G) - \tr(\U\trans \U)) \overset{\step{2}}{\leq} 0,$$ 
\noi where step \step{1} uses Inequality (\ref{eq:DD}); step \step{2} uses $4 \rho^2 - 1\leq 0$ for all $\rho\in(0,\frac{1}{2}]$, and Inequality (\ref{eq:GU}).\\
Case (\bfit{ii}): $\rho\in(\frac{1}{2},\infty)$. We have:
$$\|\GGG_{1/2}\|_{\fro}^2 - \|\GGG_{\rho}\|_{\fro}^2\overset{\step{1}}{=}    (2\rho^2 - \tfrac{1}{2}) \cdot ( \tr(\U \U ) -  \tr(\U\trans \U )) \overset{\step{2}}{\leq}0,$$ 
\noi where step \step{1} uses Inequalities (\ref{eq:GD}) and (\ref{eq:DD}); step \step{2} uses $2\rho^2 - \tfrac{1}{2}\geq 0$ for all $\rho\in(\frac{1}{2},\infty)$, and the fact that $\tr(\U \U) - \tr(\U\U\trans) \leq 0$ for all $\U\in \Rn^{r\times r}$. Therefore, we conclude that $\|\GGG_{\rho}\|_{\fro} \geq \min(1,2 \rho) \|\GGG_{1/2}\|_{\fro}$.

\textbf{Part (b-ii)}. We now prove that $\|\GGG_{\rho}\|_{\fro} \leq \max(1,2 \rho) \|\GGG_{1/2}\|_{\fro}$. We consider two cases. Case (\bfit{i}): $\rho\in(0,\frac{1}{2}]$. We have:
$$\|\GGG_{1/2}\|_{\fro}^2 - \|\GGG_{\rho}\|_{\fro}^2\overset{\step{1}}{=} (2\rho^2 - \tfrac{1}{2}) \cdot ( \tr(\U \U ) -  \tr(\U\trans \U )) \overset{\step{2}}{\geq}0,
$$
\noi where step \step{1} uses Inequality (\ref{eq:DD}); step \step{2} uses $2\rho^2 - \tfrac{1}{2} \leq 0$ for all $\rho\in(0,\frac{1}{2}]$, and the fact that $\tr(\U \U) - \tr(\U\U\trans) \leq 0$ for all $\U\in \Rn^{r\times r}$. \\
Case (\bfit{ii}): $\rho\in(\frac{1}{2},\infty)$. We have:
$$(2\rho)^2 \|\GGG_{1/2}\|_{\fro}^2 - \|\GGG_{\rho}\|_{\fro}^2\overset{\step{1}}{=}   ( 4 \rho^2 - 1) \cdot ( \tr(\G\trans \G) - \tr(\U\trans \U))\overset{\step{2}}{\geq} 0,$$
\noi where step \step{1} uses Inequalities (\ref{eq:GD}) and (\ref{eq:DD}); step \step{2} uses $ 4 \rho^2 - 1\geq 0$ for all $\rho\in(\frac{1}{2},\infty)$, and Inequality (\ref{eq:GU}). Therefore, we conclude that: $\|\GGG_{\rho}\|_{\fro} \geq \min(1,2 \rho) \|\GGG_{1/2}\|_{\fro}$.

\end{proof}

\subsection{Proof of Lemma \ref{lemma:subgradient:R:bound}}
\label{app:lemma:subgradient:R:bound}

\begin{proof}

Recall that the following first-order optimality conditions are equivalent for all $\X\in\Rn^{n\times r}$:
\beq
\left(\mathbf{0}\in \partial \iota_{\MM}(\X) + \nabla f(\X) \right) \Leftrightarrow \left(\mathbf{0} \in \Proj_{\T_{\X}\MM}(\nabla f(\X))\right) .\label{eq:opt:cond:equivalent}
\eeq
\noi Therefore, we derive the following results:
\beq\label{eq:HHHH}
\dist(\mathbf{0},   \partial \iota_{\MM}(\X)+ \nabla f(\X)) & =& \inf_{\mathbf{R} \in \nabla f(\X) + \partial \iota_{\MM}(\X) } \|  \mathbf{R}\|_{\fro}\nn\\
&\overset{\step{1}}{=}&  \inf_{\mathbf{R} \in \Proj_{\T_{\X}\MM}(\nabla f(\X))} \|  \mathbf{R}\|_{\fro} \nn\\
&\overset{}{=}&  \| \Proj_{\T_{\X}\MM}(\nabla f(\X)) \|_{\fro} \nn\\
&\overset{\step{2}}{=}&  \| \nabla f(\X)  - \tfrac{1}{2} \X (\X\trans \nabla f(\X)  + \nabla f(\X) \trans \X)\|_{\fro} \nn\\
&\overset{}{=}&  \| (\I-\tfrac{1}{2}\X\X\trans) (\nabla f(\X)  - \X \nabla f(\X) \trans \X)\|_{\fro} \nn\\
&\overset{\step{3}}{\leq}&  \|\nabla f(\X)  - \X \nabla f(\X) \trans \X\|_{\fro}, \nn
\eeq
\noi where step \step{1} uses Formulation (\ref{eq:opt:cond:equivalent}); step \step{2} uses $\Proj_{\T_{\X}\MM}(\Deltas) = \Deltas - \tfrac{1}{2}\X (\Deltas \trans \X+\X\trans \Deltas)$ for all $\Deltas\in\Rn^{n\times r}$ \cite{absil2008optimization}; step \step{3} uses the norm inequality $\|\mathbf{A}\mathbf{B}\|_{\fro}\leq \|\mathbf{A}\|\|\mathbf{B}\|_{\fro}$, and fact that the matrix $\I-\frac{1}{2}\X\X\trans$ only contains eigenvalues that are $\frac{1}{2}$ or $1$.

\end{proof}

\section{Proofs for Section \ref{sect:iterC}}\label{app:sect:iterC}
\subsection{Proof of Lemma \ref{lemma:bounding:dual}}\label{app:lemma:bounding:dual}

\begin{proof}

We define $\mu^t=\tau/\beta^t$.

We define $L(\X,\y,\z,\beta) \triangleq f(\X) - g(\X) + h_{\tau/\beta}(\y) + \la \z,\AA(\X) - \y \ra + \frac{\beta}{2}\|\AA(\X) - \y\|_{2}^2$.

\textbf{Part (a-i)}. Using the first-order optimality condition of $\y^{t+1} \in \arg \min_{\y} L(\X^{t+1},\y,\z^t,\beta^t)$ in Algorithm \ref{alg:main}, for all $t\geq 0$, we have:
\beq
\zero  &=&  \nabla h_{\mu^t} (\y^{t+1})   + \beta^t  (\y^{t+1} - \y^t) + \nabla_\y \SS (\X^{t+1},\y^t,\z^t,\beta^t)
 \nn\\
&\overset{\step{1}}{=}&   \nabla h_{\mu^t} (\y^{t+1})  + \beta^t  (\y^{t+1} - \y^t)- \z^t +  \beta^t (\y^t-\AA(\X^{t+1}))   \nn\\
&=& \nabla h_{\mu^t} (\y^{t+1}) - \z^t +  \beta^t (\y^{t+1}-\AA(\X^{t+1}))  \nn\\
&\overset{\step{2}}{=}&  \nabla h_{\mu^t} (\y^{t+1}) - \z^t +   \tfrac{1}{\sigma} (\z^t - \z^{t+1})  ,\label{eq:Z:hy}
\eeq
\noi where step \step{1} uses $\nabla_{\y}\SS (\X^{t+1},\y,\z^t,\beta^t) = - \z^t +  \beta^t (\y -\AA(\X^{t+1}))$; step \step{2} uses $\z^{t+1} = \z^t + \sigma \beta^t (\AA (\X^{t+1}) - \y^{t+1})$.

\noi \textbf{Part (a-ii)}. We obtain:
\beq
\partial h(\breve{\y}^{t+1}) -\z^t &\overset{\step{1}}{\ni}&  \beta^t (\mathbf{b}- \y^{t+1} ) -\z^t  \nn\\
&\overset{\step{2}}{=}& \beta^t \y^t -  \nabla_{\y} \mathcal{S}^t(\X^{t+1},\y^t,\z^t,\beta^t)  - \beta^t \y^{t+1}  -\z^t\nn\\
&\overset{\step{3}}{=}& \beta^t \y^t  - \beta^t (\y^t - \AA(\X^{t+1}))  - \beta^t \y^{t+1}   \nn\\
&\overset{}{=}& \beta^t ( \AA(\X^{t+1})- \y^{t+1} )  \nn\\
&\overset{\step{4}}{=}& \tfrac{1}{\sigma}(\z^{t+1}-\z^t), \nn
\eeq
\noi where step \step{1} uses the result in Lemma \ref{lemma:smoothing:problem:prox} that $\beta^t (\mathbf{b}- \y^{t+1} ) \in \partial h(\breve{\y}^{t+1})$; step \step{2} uses $\mathbf{b} \triangleq \y^t -  \nabla_{\y} \mathcal{S}^t(\X^{t+1},\y^t,\z^t,\beta^t) /\beta^t$, as shown in Algorithm \ref{alg:main}; step \step{3} uses $\nabla_{\y} \mathcal{S}^t(\X^{t+1},\y,\z^t,\beta^t) = -\z^t  + \beta^t (\y - \AA(\X^{t+1}))$; step \step{4} uses $\z^{t+1} -  \z^t = \sigma \beta^t (\AA (\X^{t+1}) - \y^{t+1})$.

\textbf{Part (b)}. First, we derive:
\beq \label{eq:grad:h:yy}
&&\|\nabla h_{\mu^{t-1}}(\y^{t})-\nabla h_{\mu^{t}}(\y^{t+1})\| \nn\\
&\overset{\step{1}}{\leq}& \|  \nabla h_{\mu^{t-1}}(\y^{t}) - \nabla h_{\mu^{t}}(\y^{t}) \|+ \|\nabla h_{\mu^{t}}(\y^{t}) - \nabla h_{\mu^{t}}(\y^{t+1})  \|    \nn\\
&\overset{\step{2}}{\leq}& \|\nabla h_{\mu^{t}}(\y^{t}) - \nabla h_{\mu^{t-1}}(\y^{t})\|+ \tfrac{1}{\mu^t} \|\y^{t+1}-\y^{t}\|     \nn\\
&\overset{\step{3}}{\leq}&  C_h  (\tfrac{\mu^{t-1}}{\mu^{t}} - 1)+ \tfrac{1}{\mu^t} \|\y^{t+1}-\y^{t}\| \nn\\
&\overset{\step{4}}{=}&  C_h  (\tfrac{\beta^{t}}{\beta^{t-1}} - 1)+ \tfrac{\beta^t}{\tau} \|\y^{t+1}-\y^{t}\|,
\eeq
\noi where step \step{1} uses $\|\a-\b\|\leq \|\a-\mathbf{c}\|+\|\mathbf{c}-\b\|$; step \step{2} uses the fact that the function $h_{\mu^t}(\y)$ is $\tfrac{1}{\mu^t}$-smooth \textit{w.r.t.} $\y$ that: $\|\nabla h_{\mu^t}(\y^{t+1}) - \nabla h_{\mu^{t}}(\y^{t}) \| \leq  \tfrac{1}{\mu^t} \|\y^{t+1}-\y^{t}\|$; step \step{3} uses the fact that $\| \nabla h_{\mu^{t}}(\y^{t}) - \nabla h_{\mu^{t-1}}(\y^{t})    \|  \leq  ({\mu^{t-1}}/{\mu^{t}} - 1) C_h$ which holds due to Lemma \ref{lemma:lip:mu}; step \step{4} uses $\mu^t = \tfrac{\tau}{\beta^t}$.

Second, we have from Equality (\ref{eq:Z:hy}):
\beq \label{eq:Z:2}
&&\forall t\geq 0,~\zero \in \sigma \nabla h_{\mu^{t}}(\y^{t+1}) - \sigma\z^{t} + (\z^{t} - \z^{t+1}),\nn\\
&&\forall t\geq 1,~\zero \in \sigma \nabla h_{\mu^{t-1}}(\y^{t}) - \sigma \z^{t-1} + (\z^{t-1} - \z^t).\nn
\eeq
\noi Combining these two equalities yields:
\beq
\forall t\geq 1,\, \z^{t+1}-\z^t  = (\sigma-1) (\z^{t-1}-\z^{t}) + \sigma (\nabla h_{\mu^{t}}(\y^{t+1}) - \nabla h_{\mu^{t-1}}(\y^{t}).\nn
\eeq
\noi Applying Lemma \ref{lemma:a:b:sigma} with $\a^+=\z^{t+1}-\z^t$, $\a=\z^{t-1}-\z^{t}$, $\b=\sigma\{\nabla h_{\mu^{t}}(\y^{t+1}) - \nabla h_{\mu^{t-1}}(\y^{t})\}$, and $\varrho=\sigma-1 \in[0,1)$, we have:
\beq\label{eq:zzz}
&&\|\z^{t+1}-\z^t\|_2^2 - \tfrac{\varrho}{1-\varrho} (\|\z^{t-1}-\z^{t}\|_2^2 - \|\z^{t+1}-\z^t\|_2^2) \nn\\
&\leq& \tfrac{\sigma^2}{(1-\varrho)^2} \|\nabla h_{\mu^{t}}(\y^{t+1}) - \nabla h_{\mu^{t-1}}(\y^{t})\|_2^2\nn\\
&\overset{\step{1}}{\leq}&  \tfrac{2\sigma^2}{(1-\varrho)^2} \cdot  \tfrac{(\beta^t)^2}{\tau^2} \|\y^{t+1}-\y^{t}\|_2^2 + \tfrac{2\sigma^2}{(1-\varrho)^2} \cdot C^2_h  ( \tfrac{\beta^{t}}{\beta^{t-1}} - 1)^2  \nn\\
&\overset{\step{2}}{\leq}&  \underbrace{\tfrac{2\sigma^2}{(1-\varrho)^2}  \tfrac{1}{\tau^2}}_{\triangleq \dot{\sigma}} \cdot (\beta^t)^2 \|\y^{t+1}-\y^{t}\|_2^2 + \underbrace{\tfrac{2\sigma^2}{(1-\varrho)^2} \cdot C^2_h\cdot  \tfrac{6}{p} }_{\triangleq \ddot{\sigma}} \cdot ( \tfrac{\beta^0}{\beta^t} - \tfrac{\beta^0}{\beta^{t+1}}),\nn
\eeq
\noi where step \step{1} uses Inequality (\ref{eq:grad:h:yy}), and the inequality $(a+b)^2\leq 2a^2+2b^2$ for all $a,b\in \Rn$; step \step{2} uses Lemma \ref{lemma:bound:mumumu} that $(\tfrac{\beta^{t}}{\beta^{t-1}} - 1)^2 \leq \frac{6\beta^0}{p}(\tfrac{1}{\beta^t}-\tfrac{1}{\beta^{t+1}})$ for all $t\geq 1$;


\end{proof}

\subsection{Proof of Lemma \ref{lemma:smooth:LL}}\label{app:lemma:smooth:LL}

\begin{proof}


\textbf{Part (a)}. We have:
\beq
\beta^{t+1} - \beta^t \cdot (1+\xi) \overset{\step{1}}{=}  \beta^0 \xi (t+1)^p -  \beta^0 \xi t^p    - \beta^t \xi \overset{\step{2}}{\leq}   \beta^0 \xi     - \beta^t \xi  \overset{\step{3}}{\leq}    0, \nn
\eeq
\noi where step \step{1} uses $\beta^t = \beta^0 (1 + \xi t^p)$; step \step{2} uses $(t+1)^p - t^p\leq 1$ for all $p\in(0,1)$; step \step{3} uses $\beta^0\leq \beta^t$ and $\xi>0$.

\textbf{Part (b)}. It holds with $\elldown = \Aup^2$ and $\ellup = \Aup^2 + L_f/\beta^0$.

\end{proof}
\subsection{Proof of Lemma \ref{lemma:bound:solution}}
\label{app:lemma:bound:solution}

\begin{proof}
We define $\overline{\rm{X}} \triangleq \sqrt{r}$, $\overline{\rm{z}}\triangleq \| \z^{0} \|+ \tfrac{ \sigma C_h }{ 2 - \sigma} $, $\overline{\rm{y}} \triangleq \Aup \sqrt{r} + \tfrac{2 \overline{\rm{z}}}{\beta^0}$, where $\sigma \in[1,2)$.

We let $\underline{\rm{\Theta}}\triangleq F(\bar{\X}) - \mu^0 C_h^2 - C_h (\Aup \sqrt{r} + \overline{\rm{y}}) - \tfrac{\overline{\rm{z}}^2}{2\beta^0}$, where $\bar{\X}$ is the optimal solution of Problem (\ref{eq:main}).

\textbf{Part (a)}. Given $\X^{t+1} \in \MM$, we have: $\|\X^t\|_{\fro}\leq \overline{\rm{X}}\triangleq \sqrt{r}$.

\textbf{Part (b)}. We show that $\|\z^t\| \leq \overline{\rm{z}}$. For all $t\geq 0$, we have:
\beq
\| \z^{t+1}\| &\overset{\step{1}}{\leq}& \| (\sigma-1) \z^{t} \|+ \| (\sigma-1)\z^{t} + \z^{t+1} \|  \nn\\
&\overset{\step{2}}{=}&  (\sigma-1) \| \z^{t} \|  + \|\sigma\partial h(\breve{\y}^{t+1}) \| \nn\\
&\overset{\step{3}}{=}& (\sigma-1) \| \z^{t} \| + \sigma C_h,\nn
\eeq
\noi step \step{1} uses the triangle inequality; step \step{2} uses $\z^{t+1} + (\sigma-1) \z^t \in \sigma\partial h(\breve{\y}^{t+1})$, as shown in Lemma \ref{lemma:bounding:dual}(\bfit{a}); step \step{3} uses $C_h$-Lipschitz continuity of $h(\y)$. Applying Lemma \ref{lemma:two:non:negative:sequences} with $\a_{t}=\|\z^{t+1}\|$, $c=\sigma C_h$, and $\varrho=\sigma-1 \in [0,1)$, we have:
\beq
\forall t\geq 0,\,\|\z^{t+1} \| \leq \|\z^0\| +  \tfrac{c}{1-\varrho} = \|\z^0\| +  \tfrac{\sigma C_h}{2-\sigma} \triangleq \overline{\rm{z}}.\nn
\eeq

\textbf{Part (c)}. We show that $\|\y^t\| \leq \overline{\rm{y}}$. For all $t\geq0$, we have:
\beq
\|\y^{t+1}\| &=& \|\AA(\X^{t+1}) - \tfrac{\z^{t+1}-\z^t}{\sigma \beta^t} \| \nn\\
&\overset{\step{1}}{\leq}& \| \AA(\X^{t+1})\| + \tfrac{1}{\beta^0} \|\z^{t+1}-\z^t\|  \nn\\
&\overset{\step{2}}{\leq}& \Aup \sqrt{r} + \tfrac{1}{\beta^0} \cdot 2 \overline{\rm{z}}
\triangleq \overline{\rm{y}} , \nn
\eeq
\noi where step \step{1} uses the triangle inequality, $\sigma\geq 1$, and $\tfrac{1}{\beta^t}\leq \tfrac{1}{\beta^0}$; step \step{2} uses $\| \AA(\X)\|_{\fro} \leq \Aup \|\X\|_{\fro} \leq \Aup \sqrt{r}$, and $\|\z^t\|\leq \overline{\rm{z}}$.

\textbf{Part (d)}. We show that $\Theta^t \geq \underline{\rm{\Theta}}$. For all $t\geq 1$, we have:
\begin{align}
 \Theta^t \triangleq&~ L(\X^{t},\y^t,\z^t,\beta^{t},\mu^{t-1})+ \mu^{t-1} C_h^2+ \TTT^t + \DDD^{t}  + \PPP^{t}  \nn\\
\overset{\step{1}}{\geq}&~ f(\X^t) - g(\X^t) + h_{\mu^{t-1}}(\y^t) + \la \z^t,\AA(\X^t) - \y^t \ra + \tfrac{\beta^t}{2}\|\AA(\X^t) - \y^t\|_{2}^2 \nn\\
 =  &~ f(\X^t) - g(\X^t) + h_{\mu^{t-1}}(\y^t) +  \tfrac{\beta^t}{2}\|\AA(\X^t) - \y^t + \z^t /\beta^t\|_{2}^2 - \tfrac{\beta^t}{2} \|\z^t /\beta^t\|_{2}^2\nn\\
\overset{\step{2}}{\geq}&~ f(\X^t) - g(\X^t) +   h_{\mu^{t-1}}(\AA(\X^{t})) -  C_h \| \AA(\X^{t}) -\y^t\|  - \tfrac{1}{2\beta^t} \|\z^t  \|_{2}^2\nn\\
\overset{\step{3}}{\geq}&~ f(\X^t) - g(\X^t) +  h(\AA(\X^{t})) - \mu^{t-1} C_h^2 -  C_h (\| \AA(\X^{t}) \| +\|\y^t\| ) - \tfrac{1}{2\beta^t} \|\z^t  \|_{2}^2\nn\\
\overset{\step{4}}{\geq}& ~F(\bar{\X}) - \mu^0 C_h^2 - C_h (\Aup \sqrt{r} + \overline{\rm{y}}) - \tfrac{\overline{\rm{z}}^2}{2\beta^0}\triangleq  \underline{\rm{\Theta}}, \nn
\end{align}
\noi where step \step{1} uses the definition of $L(\X,\y,\z,\beta)$ and the positivity of $\{\mu^{t},\TTT^t,\DDD^t,\PPP^t\}$; step \step{2} uses the $L_h$-Lipschitz continuity of $h_{\mu^{t-1}}(\y)$, ensuring $h_{\mu^{t-1}}(\y^t)\geq h_{\mu^{t-1}}(\y) - C_h \|\y^t -\y \|$, with the specific choice of $\y = \AA(\X^{t})$; step \step{3} uses $h(\y)-h_{\mu}(\y)\leq \mu C_h^2$, which has been shown in Lemma \ref{lemma:mu:continous}; step \step{4} uses $\mu^t\leq \mu^0$, $\beta^t\geq \beta^0$, $\| \AA(\X)\|\leq \Aup \|\X\|_{\fro}\leq \Aup \sqrt{r}$ for all $\X\in\MM$; $\|\y^t\|\leq \overline{\rm{y}}$, and $\|\z^t\|\leq \overline{\rm{z}}$.

 \end{proof}

\subsection{Proof of Lemma \ref{lemma:dec:non:x}}
 \label{app:lemma:dec:non:x}

\begin{proof}

We define $L(\X,\y,\z,\beta) \triangleq f(\X) - g(\X) + h_{\tau/\beta}(\y) + \la \z,\AA(\X) - \y \ra + \frac{\beta}{2}\|\AA(\X) - \y\|_{2}^2$.

We define $\mu^t\triangleq \tau/\beta^t$. We define $\DDD^t\triangleq \tfrac{\sigma-1}{2-\sigma} \tfrac{2}{\beta^{t-1}}\|\z^{t}-\z^{t-1}\|_2^2 $.

\textbf{Part (a)}. We focus on the sufficient decrease for variable $\{\beta\}$. We have:
\begin{align} \label{eq:dec:betabeta}
& ~ \varepsilon_{\beta} \beta^t \BB_{t+1}^2 + L(\X^{t+1},\y^{t+1},\z^{t+1},\beta^{t+1})  - L(\X^{t+1},\y^{t+1},\z^{t+1},\beta^{t}) \nn\\
\overset{\step{1}}{=}& ~  \varepsilon_{\beta} \beta^t (\tfrac{1}{\beta^t} - \tfrac{1}{\beta^{t+1}})\cdot\tfrac{1}{\beta^t} +  h_{\mu^{t+1}}(\y^{t+1}) -h_{\mu^t}(\y^{t+1}) + \tfrac{\beta^{t+1}-\beta^{t}}{2}\|\AA(\X^{t+1}) - \y^{t+1}\|_{2}^2   \nn\\
\overset{\step{2}}{\leq}& ~ \varepsilon_{\beta}  ( \tfrac{1}{\beta^{t}} - \tfrac{1}{\beta^{t+1}}) +  \tau C_h^2 (\tfrac{1}{\beta^{t}}-\tfrac{1}{\beta^{t+1}}) + \tfrac{\beta^{t+1}-\beta^{t}}{2 (\sigma \beta^t)^2} \| \z^{t+1}-\z^t\|_{2}^2 \nn\\
\overset{\step{3}}{\leq}& ~ \varepsilon_{\beta}  ( \tfrac{1}{\beta^{t}} - \tfrac{1}{\beta^{t+1}}) +  \tau C_h^2 (\tfrac{1}{\beta^{t}}-\tfrac{1}{\beta^{t+1}}) + \tfrac{(1+\xi)\beta^{t}-\beta^{t}}{ \sigma^2 \beta^t}  \tfrac{1}{2\beta^t}\| \z^{t+1}-\z^t\|_{2}^2 \nn\\
\overset{\step{4}}{\leq}& ~ (\varepsilon_{\beta} + \tau C_h^2) (\tfrac{1}{\beta^{t}}-\tfrac{1}{\beta^{t+1}}) + \tfrac{\xi}{ \sigma^2 }  \tfrac{1}{2\beta^t}\| \z^{t+1}-\z^t\|_{2}^2,
\end{align}
\noi where step \step{1} uses the definition of $L(\cdot,\cdot,\cdot,\cdot)$; step \step{2} uses Lemma \ref{lemma:mu:continous}, and the fact that $\AA(\X^{t+1}) - \y^{t+1} = \tfrac{1}{\sigma \beta^t} (\z^{t+1}-\z^{t})$; step \step{3} uses $\beta^{t+1}\leq (1+\xi)\beta^t$; step \step{4} uses $\beta^t( \tfrac{1}{\beta^{t}} - \tfrac{1}{\beta^{t+1}})\leq1$.

\textbf{Part (b)}. We focus on the sufficient decrease for variable $\{\z\}$. We have:
\begin{align}\label{eq:dec:zz}
&~ \varepsilon_z \beta^t \|\AA(\X^{t+1}) - \y^{t+1} \|_{2}^2 + L(\X^{t+1},\y^{t+1},\z^{t+1},\beta^{t}) - L(\X^{t+1},\y^{t+1},\z^{t},\beta^t)     \nn \\
\overset{\step{1}}{=}&~ \varepsilon_z \beta^t \|\AA(\X^{t+1}) - \y^{t+1} \|_{2}^2 + \la \AA(\X^{t+1} )- \y^{t+1},\z^{t+1}-\z^t \ra  \nn\\
\overset{\step{2}}{=}&~   (\tfrac{\varepsilon_z}{\sigma^2}+\tfrac{1}{\sigma}   )  \tfrac{1}{\beta^t} \| \z^{t+1} - \z^t \|_2^2,
\end{align}
\noi where step \step{1} uses the definition of $L(\X,\y,\z,\beta)$; step \step{2} uses $\z^{t+1} - \z^t =   \sigma \beta^t (\AA (\X^{t+1}) - \y^{t+1})$.

\textbf{Part (c)}. We focus on the sufficient decrease for variable $\{\y\}$. We have:
\begin{align} \label{eq:dec:yy}
&~  L(\X^{t+1},\y^{t+1},\z^{t},\beta^{t})  - L(\X^{t+1},\y^{t},\z^{t},\beta^{t})\nn\\
 \overset{}{=}& ~ h_{\mu^t}(\y^{t+1}) -  h_{\mu^t}(\y^{t}) + \la\y^t-\y^{t+1}, \z^t\ra + \tfrac{\beta^t}{2}\|\y^{t+1}-\AA(\X^{t+1})\|_{2}^2 - \tfrac{\beta^t}{2}\|\y^{t}-\AA(\X^{t+1})\|_{2}^2 \nn\\
 \overset{\step{1}}{=}& ~  h_{\mu^t}(\y^{t+1}) -  h_{\mu^t}(\y^{t}) + \la\y^t-\y^{t+1}, \z^t + \beta^t (\AA(\X^{t+1}) - \y^{t+1})\ra  - \tfrac{\beta^t}{2}\|\y^{t+1}-\y^t\|_{2}^2 \nn\\
 \overset{\step{2}}{=}&~ h_{\mu^t}(\y^{t+1}) -  h_{\mu^t}(\y^{t})  - \tfrac{\beta^t}{2}\|\y^{t+1}-\y^t\|_{2}^2+\la\y^t-\y^{t+1}, \z^t + \tfrac{1}{\sigma} (\z^{t+1}-\z^t) \ra \nn\\
 \overset{\step{3}}{=}&~h_{\mu^t}(\y^{t+1}) -  h_{\mu^t}(\y^{t}) - \tfrac{\beta^t}{2}\|\y^{t+1}-\y^t\|_{2}^2+ \la\y^t-\y^{t+1}, \nabla h_{\mu^t} (\y^{t+1}) \ra \nn\\
 \overset{\step{4}}{\leq}&~ \tfrac{1}{2\mu^t} \|\y^{t+1}-\y^t\|_{2}^2 - \tfrac{\beta^t}{2}\|\y^{t+1}-\y^t\|_{2}^2 \nn\\
 \overset{\step{5}}{=}&~  (\tfrac{1}{\tau} -1)\tfrac{\beta^t}{2}\|\y^{t+1}-\y^t\|_{2}^2,
\end{align}
\noi where step \step{1} uses the Pythagoras Relation that $\tfrac{1}{2}\|\y^+-\a\|_{2}^2 - \tfrac{1}{2}\|\y-\a\|_{2}^2    =  - \tfrac{1}{2}\|\y^+-\y\|_{2}^2 + \la \y- \y^+, \a-\y^+\ra $ for all $\y,\y^+,\a\in \Rn^{m}$; step \step{2} uses $\z^{t+1}-\z^t = \sigma \beta^t (\AA(\X^{t+1}) - \y^{t+1})$; step \step{3} uses $\nabla h_{\mu^t} (\y^{t+1})=\z^t + \tfrac{1}{\sigma} (\z^{t+1}-\z^t)$, as shown in Lemma \ref{lemma:bounding:dual}(\bfit{a}); step \step{4} uses the fact that the function $h_{\mu^{t}}(\y)$ is $(1/\mu^t)$-weakly convex \textit{w.r.t} $\y$; step \step{5} uses $\mu^t\beta^t=\tau$.

\textbf{Part (d)}. We focus on the sufficient decrease for variable $\{\X\}$. We have:
\beq  \label{eq:dec:XX}
L(\X^{t+1},\y^{t},\z^{t},\beta^{t})  - L(\X^{t},\y^{t},\z^{t},\beta^{t}) = \mathfrak{X}.
\eeq

Adding Inequalities (\ref{eq:dec:betabeta}), (\ref{eq:dec:zz}), (\ref{eq:dec:yy}), and (\ref{eq:dec:XX}) together, we have:
\begin{align}
&~ \ts \varepsilon_{\beta} \beta^t \BB_{t+1}^2 + \varepsilon_y \beta^t \|\y^{t+1}-\y^t \|_{2}^2 + \varepsilon_z \beta^t \|\AA(\X^{t+1}) - \y^{t+1} \|_{2}^2 \nn\\
&~ + L(\X^{t+1},\y^{t+1},\z^{t+1},\beta^{t+1}) - L(\X^{t},\y^t,\z^t,\beta^t)  \nn\\
\overset{}{\leq}&~ (\varepsilon_{\beta} + \tau C_h^2) (\tfrac{1}{\beta^{t}}-\tfrac{1}{\beta^{t+1}}) + (\tfrac{1}{2\tau} -\tfrac{1}{2}) \beta^t \|\y^{t+1}-\y^t\|_{2}^2 + ( \tfrac{\xi}{2\sigma^2} + \tfrac{\varepsilon_z}{\sigma^2}+\tfrac{1}{\sigma}) \cdot \tfrac{1}{\beta^t}\|\z^{t+1}-\z^t\|_{2}^2\nn\\
\overset{\step{1}}{\leq}&~ (\varepsilon_{\beta} + \tau C_h^2) (\tfrac{1}{\beta^{t}}-\tfrac{1}{\beta^{t+1}})+ (\tfrac{1}{2\tau} -\tfrac{1}{2})\beta^t \|\y^{t+1}-\y^t\|_{2}^2 + ( \tfrac{1}{2\sigma} + \tfrac{1}{4\sigma}+\tfrac{1}{\sigma}) \cdot \tfrac{1}{\beta^t}\|\z^{t+1}-\z^t\|_{2}^2 \nn\\
\overset{\step{2}}{\leq}&~ (\varepsilon_{\beta} + \tau C_h^2) (\tfrac{1}{\beta^{t}}-\tfrac{1}{\beta^{t+1}}) + (\tfrac{1}{2\tau} -\tfrac{1}{2})\beta^t \|\y^{t+1}-\y^t\|_{2}^2 + \tfrac{2}{\beta^t}\|\z^{t+1}-\z^t\|_{2}^2 \nn\\
\overset{\step{3}}{\leq}&~ (\varepsilon_{\beta} + \tau C_h^2) (\tfrac{1}{\beta^{t}}-\tfrac{1}{\beta^{t+1}}) + (\tfrac{1}{2\tau} -\tfrac{1}{2})\beta^t \|\y^{t+1}-\y^t\|_{2}^2 \nn\\
&~ + \tfrac{2}{\beta^t}\{  \tfrac{\sigma-1}{2-\sigma} (\|\z^{t}-\z^{t-1}\|_2^2 - \|\z^{t+1}-\z^t\|_2^2) + \dot{\sigma}(\beta^t)^2 \|\y^{t+1}-\y^{t}\|_2^2 + \ddot{\sigma} (\tfrac{\beta^0}{\beta^t}-\tfrac{\beta^0}{\beta^{t+1}}) \} \nn\\
\overset{\step{4}}{\leq}&~ \underbrace{ (\varepsilon_{\beta} + \tau C_h^2 + \tfrac{2}{\sigma}\ddot{\sigma} )}_{\triangleq c} (\tfrac{1}{\beta^{t}}-\tfrac{1}{\beta^{t+1}}) +    \underbrace{\tfrac{\sigma-1}{2-\sigma} ( \tfrac{2}{\beta^{t-1}}\|\z^{t}-\z^{t-1}\|_2^2 - \tfrac{2}{\beta^t}\|\z^{t+1}-\z^t\|_2^2)}_{= \DDD^t - \DDD^{t+1}} \nn\\
&~ + (\tfrac{1}{2\tau}  +  2\dot{\sigma} - \tfrac{1}{2} )\beta^t\|\y^{t+1}-\y^t\|_{2}^2  ,\nn
\end{align}
\noi where step \step{1} uses $\varepsilon_z=\tfrac{1}{4}$, $\xi\leq 1$, $\sigma\geq 1$; step \step{2} uses $\sigma\geq 1$; step \step{3} uses the upper bound for $\|\z^{t+1}-\z^t\|_2^2$ as shown in Lemma \ref{lemma:bounding:dual}(\bfit{b}); step \step{4} uses $\frac{1}{\beta^t}\leq \frac{1}{\beta^{t-1}}$. This further leads to:
\begin{align}
&~ \varepsilon_{\beta} \beta^t \BB_{t+1}^2 + \varepsilon_y \beta^t \YY_{t+1}^2 + \varepsilon_z \beta^t \ZZ_{t+1}^2 + \DDD^{t+1} - \DDD^t - \tfrac{c}{\beta^{t}}+\tfrac{c}{\beta^{t+1}}  \nn\\
&~ + L(\X^{t+1},\y^{t+1},\z^{t+1},\beta^{t+1}) - L(\X^{t},\y^t,\z^t,\beta^t) \nn\\
\overset{}{\leq}&~\beta^t\|\y^{t+1}-\y^t\|_{2}^2 \cdot ( \tfrac{1}{2\tau} + 2\dot{\sigma} - \tfrac{1}{2} )\nn\\
\overset{\step{1}}{=}&~\beta^t\|\y^{t+1}-\y^t\|_{2}^2 \cdot ( \tfrac{1}{2\tau} + 2 \tfrac{2\sigma^2}{(2-\sigma)^2}  \tfrac{1}{\tau^2} - \tfrac{1}{2} )\nn\\
\overset{\step{2}}{\leq}&~\beta^t\|\y^{t+1}-\y^t\|_{2}^2 \cdot ( \ts \tfrac{2-\sigma}{8}  + \tfrac{4\sigma}{16} - \tfrac{1}{2} )\nn\\
\overset{}{=}&~-\beta^t\|\y^{t+1}-\y^t\|_{2}^2 \cdot \underbrace{(\tfrac{2-\sigma}{8})}_{\triangleq \varepsilon_y},\nn
\end{align}
\noi where step \step{1} uses the definition of $\dot{\sigma}\triangleq \tfrac{2\sigma^2}{(2-\sigma)^2}  \tfrac{1}{\tau^2}$, as shown in Lemma \ref{lemma:bounding:dual}; step \step{2} uses $\tau\geq \tfrac{4}{2-\sigma}$.


\end{proof}

\subsection{Proof of Lemma \ref{lemma:dec:x:proj}}
 \label{app:lemma:dec:x:proj}

\begin{proof}

We define $\mathcal{S}(\X,\y^t,\z^t,\beta^t) \triangleq f(\X)+ \la \z^t,\AA(\X) - \y^t\ra + \tfrac{\beta^t}{2}\|\AA(\X) - \y^t\|_{2}^2$.

We let $\G^t \in \nabla_{\X} \mathcal{S}(\X_{\cc}^t,\y^t,\z^t,\beta^t) - \partial g(\X^t)$.

We define $\PPP^{t} \triangleq \tfrac{1}{2}(\alpha+ \theta\alpha   ) \ell(\beta^t)\| \X^{t}  - \X^{t-1}\|_{\fro}^2$.

We define $\varepsilon_x'\triangleq  (\theta-1 - \alpha  -  \theta \alpha  ) - (1+\xi) (\alpha+ \theta\alpha)>0$, and $\varepsilon_x \triangleq \tfrac{1}{2} \varepsilon_x'\elldown>0$.

First, using the optimality condition of $\X^{t+1}\in \MM$, we have:
\beq
\la \X^{t+1} - \X^t, \G^t  \ra + \tfrac{\theta \ell(\beta^t) }{2} \|\X^{t+1} - \X_{\cc}^t\|_{\fro}^2\leq \la \X^t - \X^t, \G^t \ra + \tfrac{\theta \ell(\beta^t) }{2} \|\X^t - \X_{\cc}^t\|_{\fro}^2. \label{eq:add:1}
\eeq
Second, we have:
\begin{align}\label{eq:add:5}
&~L(\X^{t+1},\y^{t},\z^{t},\beta^t) - L(\X^{t},\y^{t},\z^{t},\beta^t) \nn\\
=&~ \ts  \mathcal{S}(\X^{t+1},\y^t,\z^t,\beta^t) - \mathcal{S}(\X^{t},\y^t,\z^t,\beta^t) + g(\X^t) - g(\X^{t+1})\nn\\
\overset{\step{1}}{\leq}&~ \ts  \tfrac{\ell(\beta^t) }{2}\|\X^{t+1} - \X^t\|_{\fro}^2+ \la \X^{t+1}-\X^t, \nabla_{\X} \mathcal{S}(\X^t,\y^t,\z^t,\beta^t)\ra    + \la \X^{t}-\X^{t+1}, \partial g(\X^t)\ra,
\end{align}
\noi where step \step{1} uses the $\ell(\beta^t)$-smoothness of $\mathcal{S}(\X,\y^t,\z^t,\beta^t)$ and convexity of $g(\X)$.

Third, we derive:
\beq \label{eq:add:4}
&&\la \X^{t+1}-\X^t, \nabla_{\X} \mathcal{S}(\X^t,\y^t,\z^t,\beta^t) - \nabla_{\X} \mathcal{S}( \X_{\cc}^t ,\y^t,\z^t,\beta^t)  \ra\nn\\
&\overset{\step{1}}{\leq}& \|\X^{t+1}-\X^t\|_{\fro}\cdot \| \nabla_{\X} \mathcal{S}(\X^t,\y^t,\z^t,\beta^t) - \nabla_{\X} \mathcal{S}( \X_{\cc}^t ,\y^t,\z^t,\beta^t)  \|_{\fro}\nn\\
&\overset{\step{2}}{\leq}& \|\X^{t+1}-\X^t\|_{\fro}\cdot \ell(\beta^t) \| \X^t - \X_{\cc}^t \|_{\fro}\nn\\
&\overset{\step{3}}{\leq}& \alpha \ell(\beta^t)  \|\X^{t+1}-\X^t\|_{\fro}\cdot  \| \X^t - \X^{t-1} \|_{\fro}\nn\\
&\overset{\step{4}}{\leq}& \tfrac{\alpha \ell(\beta^t)}{2}  \|\X^{t+1}-\X^t\|_{\fro}^2 + \tfrac{\alpha \ell(\beta^t)}{2} \| \X^t - \X^{t-1} \|^2_{\fro},
\eeq
\noi where step \step{1} uses the norm inequality; step \step{2} uses the $\ell(\beta^t)$-smoothness of $\mathcal{S}(\X,\y^t,\z^t,\beta^t)$; step \step{3} uses $\X_{\cc}^{t} = \X^{t}+\alpha (\X^{t}  - \X^{t-1})$; step \step{4} uses $a b \leq \frac{1}{2}a^2+\frac{1}{2}b^2$ for all $a\in\Rn$ and $b\in\Rn$.

Summing Inequalities (\ref{eq:add:1}),(\ref{eq:add:4}), and (\ref{eq:add:5}), we obtain:
\beq
&&L(\X^{t+1},\y^{t},\z^{t},\beta^t) - L(\X^{t},\y^{t},\z^{t},\beta^t) \nn\\
&\leq & \ts  \tfrac{\ell(\beta^t)}{2}  \{  (1+\alpha) \|\X^{t+1}-\X^t\|^2_{\fro} + \alpha \| \X^t - \X^{t-1} \|_{\fro} + \theta \|\X^t - \X_{\cc}^t\|_{\fro}^2  -  \theta \|\X^{t+1} - \X_{\cc}^t\|_{\fro}^2 \} \nn\\
&\overset{\step{1}}{=}& \ts \tfrac{\ell(\beta^t)}{2}  \{ (1+\alpha)  \|\X^{t+1}-\X^t\|^2_{\fro} + ( \alpha    + \theta  \alpha^2 ) \| \X^t - \X^{t-1} \|_{\fro}^2    -  \theta \|\X^{t+1} - \X^t - \alpha (\X^t-\X^{t-1})\|_{\fro}^2 \} \nn\\
&\overset{\step{2}}{\leq}& \ts \tfrac{\ell(\beta^t)}{2}  \{ (1+\alpha)  \|\X^{t+1}-\X^t\|^2_{\fro} +  (\alpha + \theta \alpha^2 )\| \X^t - \X^{t-1} \|_{\fro}^2  \nn\\
&&   +    \theta (\alpha-1) \|\X^{t+1} - \X^t\|_{\fro}^2 - \theta \alpha (\alpha-1)  \|\X^t-\X^{t-1}\|_{\fro}^2 \}    \nn\\
&\overset{}{=}& \ts \underbrace{\tfrac{1}{2}(\alpha+ \theta\alpha   ) \ell(\beta^t)\| \X^{t}  - \X^{t-1}\|_{\fro}^2}_{\triangleq \PPP^{t}}  + \tfrac{\ell(\beta^t)}{2} \cdot \|\X^{t+1}-\X^t\|^2_{\fro} \cdot \{ 1+\alpha  +  \theta \alpha - \theta \}    \nn\\
&\overset{}{=}& \ts \PPP^{t} -\PPP^{t+1} + \tfrac{1}{2} \cdot \|\X^{t+1}-\X^t\|^2_{\fro} \cdot \{ \ell(\beta^t) (1+\alpha  +  \theta \alpha - \theta) + \ell(\beta^{t+1}) (\alpha+ \theta\alpha) \}    \nn\\
&\overset{\step{3}}{\leq}& \ts \PPP^{t} - \PPP^{t+1} + \tfrac{1}{2} \cdot \|\X^{t+1}-\X^t\|^2_{\fro} \cdot \ell(\beta^t) \{ \underbrace{ (1+\alpha  +  \theta \alpha - \theta) + (1+\xi) (\alpha+ \theta\alpha)}_{\triangleq -\varepsilon_x' } \}    \nn\\
&\overset{\step{4}}{\leq}& \ts \PPP^{t}-\PPP^{t+1} - \tfrac{1}{2} \cdot \|\X^{t+1}-\X^t\|^2_{\fro} \cdot \varepsilon_x' \cdot \beta^t \elldown      \nn\\
&\overset{\step{5}}{=}& \ts \PPP^{t} - \PPP^{t+1} - \varepsilon_x \cdot \beta^t \|\X^{t+1}-\X^t\|^2_{\fro}  , \nn
\eeq
\noi where step \step{1} uses $\X_{\cc}^{t} = \X^{t}+\alpha (\X^{t}  - \X^{t-1})$; step \step{2} uses Lemma \ref{lemma:the:alpha} with $\mathbf{a}=\X^{t+1} - \X^t$, and $\mathbf{b}=\X^{t} - \X^{t-1}$; step \step{3} uses the fact that $\ell(\beta^{t+1}) \leq (1+\xi)\ell(\beta^t)$, which is implied by $\beta^{t+1}\leq (1+\xi)\beta^t$; step \step{4} uses Lemma \ref{lemma:smooth:LL} that $\beta^t\elldown\leq \ell(\beta^t) \leq \beta^t\ellup$; step \step{5} uses $\varepsilon_x \triangleq \tfrac{1}{2} \varepsilon_x' \elldown >0 $.

\end{proof}

\subsection{Proof of Lemma \ref{lemma:bound:Gamma:Pj}}
\label{app:lemma:bound:Gamma:Pj}

\begin{proof}

We define: $\Theta^t \triangleq L(\X^{t},\y^t,\z^t,\beta^t) + c/\beta^t + \PPP^{t} + \DDD^{t}$.


\textbf{Part (a)}. Using Lemma \ref{lemma:dec:non:x}, we have:
\beq\nn
&&L(\X^{t+1},\y^{t+1},\z^{t+1},\beta^{t+1}) - L(\X^{t},\y^t,\z^t,\beta^t) \nn\\
&\leq& c/\beta^t - c/\beta^{t+1} + \DDD^t -\DDD^{t+1} - \varepsilon_y \beta^t \YY_{t+1}^2 - \varepsilon_z \beta^t \ZZ_{t+1}^2 - \varepsilon_{\beta} \beta^t \BB_{t+1}^2 + \mathfrak{X}.
\eeq

Using Lemma \ref{lemma:dec:x:proj}, we have:
\beq\nn
\mathfrak{X} \leq \PPP^{t} - \PPP^{t+1} - \varepsilon_x \beta^t \XX_{t+1}^2.
\eeq
\noi Adding these two inequalities together and using the definition of $\Theta^t$, we have:
\beq
\Theta^t -\Theta^{t+1}   &\geq & \varepsilon_{\beta} \beta^t \BB_{t+1}^2 + \varepsilon_y\beta^t \YY_{t+1}^2         + \varepsilon_x \beta^t  \XX_{t+1}^2 + \varepsilon_z \beta^t \ZZ_{t+1}^2\nn\\
&\geq & \min(\varepsilon_y,\varepsilon_x,\varepsilon_z,\varepsilon_{\beta} )\cdot \beta^t \cdot (\BB_{t+1}^2+\XX_{t+1}^2+\YY_{t+1}^2+\ZZ_{t+1}^2). \nn
\eeq

\textbf{Part (b)}. Telescoping this inequality over $t$ from 1 to $T$, we have:
\begin{align}
\ts \sum_{t=1}^T \beta^t  (\BB_{t+1}^2+\XX_{t+1}^2+\YY_{t+1}^2+\ZZ_{t+1}^2) &~\leq  \ts \ts \tfrac{1}{\min(\varepsilon_y,\varepsilon_x,\varepsilon_z )} \cdot \sum_{t=1}^T (\Theta^t -\Theta^{t+1} )   \nn\\
&~= \ts \tfrac{1}{\min(\varepsilon_y,\varepsilon_x,\varepsilon_z )} \cdot (\Theta^1 -\Theta^{T+1} )   \nn\\
&~\overset{\step{1}}{\leq}  \ts \tfrac{1}{\min(\varepsilon_y,\varepsilon_x,\varepsilon_z )} \cdot (\Theta^1 - \underline{\rm{\Theta}} ) , \label{eq:OP:final:1}
\end{align}
\noi where step \step{1} uses $\Theta^t\geq \underline{\rm{\Theta}}$. Furthermore, we have:
\beq
&& \ts \sum_{t=1}^T \beta^t(\BB_{t+1}^2+\XX_{t+1}^2+\YY_{t+1}^2+\ZZ_{t+1}^2)\nn\\
&= &\ts \sum_{t=1}^T\tfrac{1}{\beta^t} (\beta^t)^2 (\BB_{t+1}^2+\XX_{t+1}^2+\YY_{t+1}^2+\ZZ_{t+1}^2)\nn\\
& \geq&  \ts \tfrac{1}{\beta^T} \sum_{t=1}^T (\beta^t)^2 (\BB_{t+1}^2+\XX_{t+1}^2+\YY_{t+1}^2+\ZZ_{t+1}^2)  \nn\\
& \overset{\step{1}}{\geq} &  \ts \tfrac{1}{4T\beta^T} (\sum_{t=1}^T \beta^t [(\BB_{t+1}+\XX_{t+1}+\YY_{t+1}+\ZZ_{t+1})])^2,\label{eq:OP:final:2}
\eeq
\noi where step \step{1} uses $\sum_{i=1}^n \x_i^2 \geq \tfrac{1}{n} (\sum_{i=1}^n |\x_i|)^2$ for all $\x\in\Rn^n$.

Combining Inequalities (\ref{eq:OP:final:1}) and (\ref{eq:OP:final:2}), we have: $$\ts \sum_{t=1}^T \beta^t [(\BB_{t+1}+\XX_{t+1}+\YY_{t+1}+\ZZ_{t+1})] \leq \ts  \{ \tfrac{\Theta^1 - \underline{\rm{\Theta}}}{\min(\varepsilon_y,\varepsilon_x,\varepsilon_z,\varepsilon_{\beta} )} \cdot 4 T \beta^T\}^{1/2} =  \OO(T^{(1+p)/2}).$$

\end{proof}

\subsection{Proof of Theorem \ref{theorem:OADMM:Pj}}
\label{app:theorem:OADMM:Pj}
\begin{proof}

We define $e^t \triangleq \BB_{t} + \ZZ_{t} + \YY_{t} + \XX_{t}$.

We define $\Crit(\X,\y,\z) \triangleq\|\AA(\X)- \y\| + \| \partial h(\y) - \z\| + \| \Proj_{\T_{\X} \MM} ( \nabla f(\X) - \partial g(\X)  + \AA \trans (\z) )\|_{\fro}$.

We define $\dot{\G} \triangleq   \nabla f(\X^{t}) - \partial g(\X^{t})  + \AA\trans(\z^{t})$.

We define $\ddot{\G} \triangleq   \nabla  f(\X_{\cc}^t)- \partial g(\X^{t})  +  \AA\trans (\z^t + \beta^t \AA(\X_{\cc}^t) - \y^t )    + \theta \ell(\beta^t)  (\X^{t+1} - \X_{\cc}^t)$.

We first derive the following inequalities:
\begin{align}\label{eq:fHGZ}
&~\|\ddot{\G} - \dot{\G}\|_{\fro}\nn\\
\overset{\step{1}}{=}&~\| \nabla f(\X^{t}) - \nabla  f(\X_{\cc}^t) -  \beta^t \AA\trans (\AA(\X_{\cc}^t) - \y^t) - \theta \ell(\beta^t)  (\X^{t+1} - \X_{\cc}^t) \|_{\fro}\nn\\
\overset{\step{2}}{\leq}&~L_f \| \X^{t} - \X_{\cc}^t\|_{\fro}  +  \beta^t \Aup \| \AA(\X_{\cc}^t) - \y^t\| + \theta \ell(\beta^t)  \| \X^{t+1} - \X_{\cc}^t \|_{\fro}\nn\\
\overset{\step{3}}{\leq}&~ L_f \| \X^{t} -\X^{t-1} \|_{\fro}  +  \beta^t \Aup \{ \| \AA(\X^{t}) - \y^t\|  + \Aup \| \X^{t} - \X^{t-1}\|_{\fro}\} \nn\\
&~ + \theta \ell(\beta^t) ( \| \X^{t+1} - \X^{t}\|_{\fro} + \| \X^{t} -  \X^{t-1} \|_{\fro}) \nn\\
\overset{\step{4}}{\leq}&~ (L_f + \beta^t \Aup^2 + \theta \ell(\beta^t) ) \| \X^{t} -\X^{t-1} \|_{\fro}  +  \beta^t \Aup \|\AA(\X^t) - \y^t\| + \theta \ell(\beta^t)  \| \X^{t+1} - \X^{t}\|_{\fro}\nn\\
\overset{\step{5}}{=}&~ \OO(\beta^{t-1} e^{t})+\OO(\beta^{t} e^{t+1}) ,
\end{align}
\noi where step \step{1} uses the definitions of $\{\ddot{\G},\dot{\G}\}$; step \step{2} uses the triangle inequality; step \step{3} uses the fact that $f(\X)$ is $L_f$-smooth, $\| \X^{t} - \X^{t}_{\cc}\|_{\fro} \leq \| \X^{t} - \X^{t-1}\|_{\fro}$, $\| \X^{t+1} - \X^{t}_{\cc}\|_{\fro} \leq \| \X^{t+1} - \X^{t}\|_{\fro}+ \| \X^{t} -  \X^{t-1} \|_{\fro}$, and $\|\AA(\X_{\cc}^t) - \y^t\| \leq \| \AA(\X^{t}) - \y^t\|_{\fro}  + \Aup \| \X^{t} - \X^{t-1}\|_{\fro}$, as shown in Lemma \ref{lemma:simple}.

We derive the following inequalities:
\beq \label{eq:C:point:XXX}
&& \| \Proj_{\T_{\X^{t}} \MM}  ( \dot{\G} ) \|_{\fro} \nn\\
&\overset{\step{1}}{=}& \| \Proj_{\T_{\X^{t}} \MM} ( \dot{\G} ) + \Proj_{\T_{\X^{t+1}} \MM} (\ddot{\G} ) \|_{\fro}\nn\\
&\overset{\step{2}}{\leq }&  2 \| \dot{\G} - \ddot{\G} \|_{\fro} + 2 \sqrt{r} \| \dot{\G} \|\cdot \|\X^{t+1}-\X^{t}\|_{\fro}\nn\\
&\overset{\step{3}}{\leq}& \OO(\beta^{t-1} e^{t})+\OO(\beta^{t} e^{t+1}) +  2 \sqrt{r} (C_f+C_g + \Aup \overline{\rm{z}} ) \|\X^{t+1}-\X^{t}\|_{\fro} \nn\\
&=& \OO(\beta^{t-1} e^{t})+\OO(\beta^{t} e^{t+1}), \nn
\eeq
\noi where step \step{1} uses the optimality of $\X^{t+1}$ that:
\beq
\zero  =   \Proj_{\T_{\X^{t+1}} \MM} (\ddot{\G} ); \nn
\eeq
\noi step \step{2} uses the result of Lemma \ref{lemma:Proj:Proj} by applying
\beq
\X=\X^t, ~\tilde{\X}=\X^{t+1},~\mathbf{P} = \dot{\G},~\text{and}~\tilde{\mathbf{P}} = \ddot{\G};\nn
\eeq
\noi step \step{3} uses Inequality (\ref{eq:fHGZ}), and the fact that $\|\dot{\G}\| = \| \nabla f(\X^{t}) - \partial g(\X^{t})  + \AA\trans(\z^{t}) )\| \leq \| \nabla f(\X^{t}) - \partial g(\X^{t})  + \AA\trans (\z^{t}) \|_{\fro} \leq C_f+C_g + \Aup \overline{\rm{z}}$.

Finally, we derive:
\beq \label{eq:CP:00:KK}
& & \ts\tfrac{1}{T}\sum_{t=1}^T \Crit(\X^{t},\breve{\y}^{t},\z^{t}) \nn\\
&\overset{\step{1}}{=}& \ts \tfrac{1}{T}\sum_{t=1}^T \{ \|\AA(\X^{t})- \breve{\y}^{t}\|  + \| \partial h(\breve{\y}^{t}) - \z^{t}\| + \| \Proj_{\T_{\X^{t}} \MM}  ( \dot{\G} )\|_{\fro}\} \nn\\
&\overset{\step{2}}{\leq}& \ts \tfrac{1}{T}\sum_{t=1}^T\{ \|\AA(\X^{t})- \y^{t}\| + \|\breve{\y}^{t} - \y^{t}\| +\|  (1- \tfrac{1}{\sigma})(\z^{t}-\z^{t-1} ) \| + \| \Proj_{\T_{\X^{t}} \MM}   ( \dot{\G} )  \|_{\fro}\} \nn\\
&\overset{\step{3}}{=}& \ts \tfrac{1}{T}\sum_{t=1}^T\{ \OO(\beta^{t-1} e^{t})+\OO(\beta^{t} e^{t+1})\}  + \tfrac{1}{T} \sum_{t=1}^T \|\breve{\y}^{t} - \y^{t} \|\nn\\
&\overset{\step{4}}{=}& \ts \tfrac{1}{T}\sum_{t=1}^T\{ \OO(\beta^t e^{t+1}) + \OO(\beta^{t-1} e^t) \} + \tfrac{1}{T}\OO(\sum_{t=1}^T \tfrac{1}{t^p}) \nn\\
&\overset{\step{5}}{=}& \ts \OO(T^{(p-1)/2}) + \OO(T^{1-p-1}) \nn\\
&\overset{\step{6}}{=}& \OO(T^{-1/3}),\nn
\eeq
\noi where step \step{1} uses the definition of $\Crit(\X,\y,\z)$; step \step{2} uses $\z^{t+1} - \partial h(\breve{\y}^{t+1})  \ni  (1- \tfrac{1}{\sigma})(\z^{t+1}-\z^{t} )$, as shown in Lemma \ref{lemma:bounding:dual}; step \step{3} uses $\|\z^t-\z^{t-1}\| = \|\sigma \beta^{t-1} (\AA(\X^{t})-\y^t)\| \leq 2\beta^t \|\AA(\X^{t})-\y^t\|=\OO(\beta^{t-1} e^t)$; step \step{4} uses Lemma \ref{lemma:smoothing:problem:prox}(\bfit{c}) that $\|\bar{\y}^t- \breve{\y}^t\|\leq \mu^t C_h = \OO(\tfrac{1}{t^p})$; step \step{5} uses Lemma \ref{lemma:lp:bounds:2} that $\sum_{t=1}^T \tfrac{1}{t^p}\leq \OO(T^{1-p})$, and Lemma \ref{lemma:bound:Gamma:Pj}(\bfit{b}) that $\tfrac{1}{T}\sum_{t=1}^T \beta^t e^{t+1} \leq \OO(T^{(p-1)/2})$; step \step{6} uses the choice $p=1/3$ and Lemma \ref{lemma:bound:Gamma:Pj}(\bfit{b}).

\end{proof}

\subsection{Proof of Lemma \ref{lemma:dec:x}}
\label{app:lemma:dec:x}

\begin{proof}


We define $\mathcal{S}(\X,\y^t,\z^t,\beta^t) \triangleq f(\X)+ \la \z^t,\AA(\X) - \y^t\ra + \tfrac{\beta^t}{2}\|\AA(\X) - \y^t\|_{2}^2$.

We let $\G^t \in \nabla_{\X} \mathcal{S}(\X^t,\y^t,\z^t,\beta^t) - \partial g(\X^t)$. We define $\eta^t \triangleq \tfrac{b^t \gamma^{j}}{\beta^t}  \in (0,\infty)$.

\textbf{Part (a)}. Initially, we show that $\|\G^t\|_{\fro}$ is always bounded for $t$ with $\X \in\MM$. We have:
\beq
\| \G^t \|_{\fro} & =& \| \nabla f(\X^t)- \partial g(\X^t)  + \AA\trans [\z^t + \beta^t (\AA(\X^t) - \y^t ) ]  \|_{\fro}  \nn\\
&\overset{\step{1}}{=}& \| \nabla f(\X^t) - \partial g(\X^t) +  \AA\trans [\z^t +  \tfrac{\beta^t}{\sigma \beta^{t-1}} ( \z^t - \z^{t-1}) ]  \|_{\fro}  \nn\\
&\overset{\step{2}}{\leq}& \| \nabla f(\X^t) \|_{\fro} + \| \partial g(\X^t)\|_{\fro}  + \Aup \cdot \{ \|\z^t\| +  \tfrac{\beta^t}{\sigma \beta^{t-1}} (\| \z^t\| + \| \z^{t-1}\|) \}    \nn\\
&\overset{\step{3}}{\leq}& C_f + C_g + \Aup \cdot ( \overline{\rm{z}} +  2(1+\xi) \overline{\rm{z}}  )  \triangleq  \overline{g}, \nn
\eeq
\noi where step \step{1} uses $\z^{t+1} = \z^t + \sigma \beta^t (\AA (\X^{t+1}) - \y^{t+1})$; step \step{2} uses the triangle inequality; step \step{3} uses $\| \nabla f(\X^t) \|_{\fro}\leq C_f$, $\| \nabla g(\X^t) \|_{\fro}\leq C_g$, $\|\nabla \AA(\X^t)\|_{\fro}\leq \|\nabla \AA(\X^t)\|\leq \Aup$, $\|\z^t\|\leq \overline{\rm{z}}$, $\tfrac{1}{\sigma}\leq 1$, $\beta^t\leq \beta^{t-1} (1+\xi)$; step \step{4} uses $\xi\leq1$.

We derive the following inequalities:
\begin{align}\label{eq:condition:satisfy}
&~ L(\X^{t+1},\y^t,\z^t,\beta^t) - L(\X^t,\y^t,\z^t,\beta^t) = \dot{\mathcal{L}}(\X^{t+1}) - \dot{\mathcal{L}}(\X^{t})\nn\\
\overset{\step{1}}{=}&~ \{ \mathcal{S}^t(\X^{t+1},\y^t,\z^t,\beta^t)   - g(\X^{t+1})\} -\{ \mathcal{S}^t(\X^{t},\y^t,\z^t,\beta^t) - g(\X^{t}) \} \nn\\
\overset{\step{2}}{\leq}&~    \tfrac{1}{2} \ell(\beta^t) \|\X^{t+1} - \X^t\|_{\fro}^2+\la \G^t, \X^{t+1} - \X^t  \ra\nn\\
\overset{\step{3}}{=}&~ \tfrac{1}{2} \ell(\beta^t) \|\Retr_{\X^{t}}(-\eta^t \GGG^t_{\rho}) - \X^t\|_{\fro}^2+ \la \G^t,\Retr_{\X^{t}}(-\eta^t \GGG^t_{\rho}) - \X^t + \eta^t \GGG^t_{\rho} \ra - \eta^t \la \G^t, \GGG^t_{\rho} \ra \nn\\
\overset{\step{4}}{\leq}&~  \tfrac{1}{2}\ell(\beta^t) \|\Retr_{\X^{t}}(-\eta^t \GGG^t_{\rho}) - \X^t\|_{\fro}^2+  \overline{g} \|\Retr_{\X^{t}}(-\eta^t \GGG^t_{\rho}) - \X^t + \eta^t \GGG^t_{\rho}\|_{\fro} - \tfrac{\eta^t}{\max(1,2\rho)} \|\GGG^t_{\rho}\|_{\fro}^2  \nn\\
\overset{\step{5}}{\leq}&~\tfrac{1}{2}\ell(\beta^t)  \dot{k} \| \eta^t \GGG^t_{\rho} \|_{\fro}^2+  \tfrac{1}{2}\overline{g} \ddot{k} \| \eta^t \GGG^t_{\rho}\|_{\fro}^2 - \tfrac{\eta^t}{\max(1,2\rho)} \|\GGG^t_{\rho}\|_{\fro}^2   \nn\\
\overset{\step{6}}{=}&~   \eta^t  \|\GGG^t_{\rho}\|_{\fro}^2 \cdot \{  \tfrac{1}{2} \ell(\beta^t) \dot{k} \tfrac{b^t \gamma^{j} }{\beta^t} + \tfrac{1}{2}\overline{g} \ddot{k} \tfrac{b^t \gamma^{j}}{\beta^t}   - \tfrac{1}{\max(1,2\rho)}  \}  \nn\\
\overset{\step{7}}{\leq}&~   \eta^t  \| \GGG^t_{\rho}\|_{\fro}^2 \cdot \{  (  \tfrac{\overline{b}}{2} \dot{k} \ellup +  \tfrac{\overline{b}}{2 \beta^0} \ddot{k} \overline{g} ) \gamma^{j}  - \tfrac{1}{\max(1,2\rho)}  \}  \nn\\
\overset{\step{8}}{\leq}&~   \eta^t  \| \GGG^t_{\rho}\|_{\fro}^2 \cdot \{  - \delta  \} ,
\end{align}
\noi where step \step{1} uses the definitions of $L(\X,\y,\z,\beta)$; step \step{2} uses the fact that the function $g(\X)$ is convex and the function $\mathcal{S}(\X,\y^t,\z^t,\beta^t)$ is $\ell(\beta^t) $-smooth \textit{w.r.t.} $\X$; step \step{3} uses $\X^{t+1} = \Retr_{\X^{t}}( -\eta^t \GGG^t_{\rho})$; step \step{4} uses the Cauchy-Schwarz Inequality, $\|\G^t\|_{\fro}\leq \overline{g}$, and Lemma \ref{lemma:GD:bound}(\bfit{a}) that $\la \G^t,\GGG^t_{\rho}\ra \geq \tfrac{1}{\max(1,2\rho)}\|\GGG^t_{\rho}\|_{\fro}^2$; step \step{5} uses Lemma \ref{lemma:M12} with $\Deltas \triangleq -\eta^t \GGG^t_{\rho}$ given that $\X^t \in \MM$ and $\Deltas \in \T_{\X^t}\MM$; step \step{6} uses $\eta^t \triangleq \tfrac{b^t \gamma^{j}}{\beta^t}$; step \step{7} uses $\ell(\beta^t)\leq \beta^t \ellup$, $\beta^0\leq \beta^t$, and $b^t\leq \overline{b}$; step \step{8} uses the fact that $\gamma^{j}$ is sufficiently small such that:
\beq
\gamma^{j} \leq \frac{2 (\tfrac{1}{\max(1,2\rho)} -\delta)}{ \ellup    \dot{k} \overline{b} + \overline{g} \ddot{k} \overline{b}/\beta^0   } \triangleq \overline{\gamma}.\label{eq:gamma:gamma}
\eeq

Given Inequality (\ref{eq:condition:satisfy}) coincides with the condition of the line search procedure, we complete the proof.

\textbf{Part (b)}. We derive the following inequalities:
\beq
&& L(\X^{t+1},\y^t,\z^t,\beta^t) - L(\X^t,\y^t,\z^t,\beta^t) \nn\\
&\overset{\step{1}}{\leq}&  - \|\GGG^t_{\rho}\|_{\fro}^2 \delta {\eta}^t  \nn\\
&\overset{\step{2}}{\leq}& - \|\GGG^t_{1/2}\|^2_{\fro} \delta {\eta}^t  \cdot \min(1,2 \rho)^2  \nn\\
&\overset{\step{3}}{=}& - \tfrac{1}{\beta^t} \|\GGG^t_{1/2}\|^2_{\fro} \cdot \delta b^t \gamma^{j-1} \gamma \cdot \min(1,2 \rho)^2 \nn\\
&\overset{\step{4}}{\leq}& -  \tfrac{1}{\beta^t} \|\GGG^t_{1/2}\|^2_{\fro}\cdot \underbrace{\delta \underline{b}  \overline{\gamma} \gamma  \cdot \min(1,2 \rho)^2}_{\triangleq \varepsilon_x }, \nn
\eeq
\noi where step \step{1} uses Inequality (\ref{eq:condition:satisfy}); step \step{2} uses Lemma \ref{lemma:GD:bound}(\bfit{b}) that $\|\GGG_{\rho}\|_{\fro} \geq \min(1,2 \rho) \|\GGG_{1/2}\|_{\fro}$; step \step{3} uses the definition ${\eta}^t\triangleq \tfrac{b^t \gamma^{j}}{\beta^t} $; step \step{4} uses $b^t\geq \underline{b}$, and the following inequality:
\beq
\gamma^{j-1} \geq \overline{\gamma}\geq \gamma^{j},  \nn
\eeq
\noi which can be implied by the stopping criteria of the line search procedure.

\end{proof}

 \subsection{Proof of Lemma \ref{lemma:bound:Gamma}}
 \label{app:lemma:bound:Gamma}

\begin{proof}

We define: $\Theta^t \triangleq L(\X^{t},\y^t,\z^t,\beta^t) + c/\beta^t +  \DDD^{t}+ 0 \times \PPP^{t}$,


\textbf{Part (a)}. Using Lemma \ref{lemma:dec:non:x}, we have:
\beq\nn
&&L(\X^{t+1},\y^{t+1},\z^{t+1},\beta^{t+1}) - L(\X^{t},\y^t,\z^t,\beta^t) \nn\\
&\leq& c/\beta^t  - c/\beta^{t+1} + \mathfrak{X} + \DDD^t -\DDD^{t+1} - \varepsilon_y\beta^t\YY_{t+1}^2 - \varepsilon_z \beta^t \ZZ_{t+1}^2 .
\eeq

Using Lemma \ref{lemma:dec:x}, we have:
\beq
\mathfrak{X} \leq 0 \times \PPP^{t} - 0 \times \PPP^{t+1} - \varepsilon_x \beta^t \XX_{t+1}^2.\nn
\eeq

\noi Adding these two inequalities together and using the definition of $\Theta^t$, we have:
\beq
\Theta^t -\Theta^{t+1}   &\geq & \varepsilon_{\beta} \beta^t \BB_{t+1}^2 + \varepsilon_y\beta^t \YY_{t+1}^2         + \varepsilon_x \beta^t  \XX_{t+1}^2 + \varepsilon_z \beta^t \ZZ_{t+1}^2\nn\\
&\geq & \min(\varepsilon_y,\varepsilon_x,\varepsilon_z,\varepsilon_{\beta} )\cdot \beta^t \cdot (\BB_{t+1}^2+\XX_{t+1}^2+\YY_{t+1}^2+\ZZ_{t+1}^2). \nn
\eeq

\textbf{Part (b)}. Using the same strategy as in deriving Lemma \ref{lemma:bound:Gamma:Pj}(\bfit{b}), we finish the proof.

\end{proof}
 \subsection{Proof of Theorem \ref{theorem:OADMM:R}}
 \label{app:theorem:OADMM:R}

\begin{proof}

We define $e^t \triangleq \BB_{t} + \ZZ_{t} + \YY_{t} + \XX_{t}$.

We define $\Crit(\X,\y,\z) \triangleq\|\AA(\X)- \y\| + \| \partial h(\y) - \z\| + \| \Proj_{\T_{\X} \MM} ( \nabla f(\X) - \partial g(\X)  + \AA\trans(\z) )\|_{\fro}$.

We define $\dot{\G}\triangleq \nabla f(\X^t) -  \partial g(\X^t)+ \AA\trans(\z^t)$, and $\ddot{\G} \triangleq \beta^t \AA\trans (\AA(\X^t) - \y^t)$.

We let $\G = \G^t \in \partial_{\X} L(\X^t,\y^t,\z^t,\beta^t)$.

\noi First, we obtain:
\beq \label{eq:D:12:333}
\GGG^t_{1/2} &\overset{\step{1}}{=}&  \G- \tfrac{1}{2}\X^t \G \trans\X^t - \tfrac{1}{2} \X^t[\X^t]\trans \G   \nn\\
 &\overset{\step{2}}{=}& (\dot{\G}-\tfrac{1}{2} \X^t \dot{\G} \trans\X^t - \tfrac{1}{2} \X^t[\X^t]\trans \dot{\G} ) + (\ddot{\G}-\tfrac{1}{2} \X^t \ddot{\G} \trans\X^t - \tfrac{1}{2} \X^t[\X^t]\trans \ddot{\G} )\nn\\
&\overset{\step{3}}{=}& \Proj_{\T_{\X^{t}} \MM} ( \dot{\G} ) + \Proj_{\T_{\X^{t}} \MM} ( \ddot{\G} )\nn
\eeq
\noi where step \step{1} uses the definition $\GGG^t_{\rho} \triangleq  \G- \rho\X^t \G \trans\X^t - (1-\rho)\X^t[\X^t]\trans \G$, as shown in Algorithm \ref{alg:main}; step \step{2} uses $\G \in \dot{\G} + \ddot{\G}$; step \step{3} uses the fact that $\Proj_{\T_{\X}\MM}(\Deltas) = \Deltas - \tfrac{1}{2}\X (\Deltas \trans \X+\X\trans \Deltas)$ for all $\Deltas\in\Rn^{n\times r}$ \cite{absil2008optimization}. This leads to:
\beq \label{eq:Proj:G1}
 \| \Proj_{\T_{\X^{t}} \MM} ( \dot{\G} )\|_{\fro}& =& \|\GGG^t_{1/2}- \Proj_{\T_{\X^{t}} \MM} ( \ddot{\G} ) \|_{\fro}\nn\\
&\overset{\step{1}}{\leq}& \|\GGG^t_{1/2}\|_{\fro} + \| \Proj_{\T_{\X^{t}} \MM} ( \ddot{\G} ) \|_{\fro}\nn\\
&\overset{\step{2}}{\leq}& \|\GGG^t_{1/2}\|_{\fro} + \|\ddot{\G}  \|_{\fro}\nn\\
&\overset{}{\leq}&\|\GGG^t_{1/2}\|_{\fro} +  \beta^t \Aup \|  \AA(\X^t) - \y^t \| \nn\\
&\overset{}{\leq}& \beta^{t} e^{t+1}  + \OO(\beta^{t-1}e^t), \nn
\eeq
\noi where step \step{1} uses the triangle inequality; step \step{2} uses Lemma \ref{lemma:bound:PXX:XX} that $\|\Proj_{\T_{\X}\MM}(\Deltas)\|_{\fro}\leq \|\Deltas\|_{\fro}$ for all $\Deltas \in \Rn^{n\times r}$.

Finally, we derive:
\beq
& & \ts\tfrac{1}{T}\sum_{t=1}^T \Crit(\X^{t},\breve{\y}^{t},\z^{t}) \nn\\
&\overset{\step{1}}{=}& \ts \tfrac{1}{T}\sum_{t=1}^T \{ \|\AA(\X^{t})- \breve{\y}^{t}\|  + \| \partial h(\breve{\y}^{t}) - \z^{t}\| + \| \Proj_{\T_{\X^{t}} \MM}  ( \dot{\G} )\|_{\fro}\} \nn\\
&\overset{\step{2}}{\leq}& \ts \tfrac{1}{T}\sum_{t=1}^T\{ \|\AA(\X^{t})- \y^{t}\| + \|\breve{\y}^{t} - \y^{t}\| +\|  (1- \tfrac{1}{\sigma})(\z^{t}-\z^{t-1} ) \| + \| \Proj_{\T_{\X^{t}} \MM}   ( \dot{\G} )  \|_{\fro}\} \nn\\
&\overset{\step{3}}{=}& \ts \tfrac{1}{T}\sum_{t=1}^T\{ \OO(\beta^t e^{t+1}) + \OO(\beta^{t-1} e^t) \} + \tfrac{1}{T} \sum_{t=1}^T \|\breve{\y}^{t} - \y^{t}\|\nn\\
&\overset{\step{4}}{=}& \ts \tfrac{1}{T}\sum_{t=1}^T\{ \OO(\beta^t e^{t+1}) + \OO(\beta^{t-1} e^t) \} + \tfrac{1}{T}\OO(\sum_{t=1}^T \tfrac{1}{t^p}) \nn\\
&\overset{\step{5}}{=}& \ts \OO(T^{(p-1)/2}) + \OO(T^{1-p-1}) \nn\\
&\overset{\step{6}}{=}& \OO(T^{-1/3}),\nn
\eeq
\noi where step \step{1} uses the definition of $\Crit(\X,\y,\z)$; step \step{2} uses $\z^{t+1} - \partial h(\breve{\y}^{t+1})  \ni  (1- \tfrac{1}{\sigma})(\z^{t+1}-\z^{t} )$, as shown in Lemma \ref{lemma:bounding:dual}; step \step{3} uses $\|\z^t-\z^{t-1}\| = \|\sigma \beta^{t-1} (\AA(\X^{t})-\y^t)\| \leq 2\beta^t \|\AA(\X^{t})-\y^t\|=\OO(\beta^{t-1} e^t)$; step \step{4} uses Lemma \ref{lemma:smoothing:problem:prox}(\bfit{c}) that $\|\bar{\y}^t- \breve{\y}^t\|\leq \mu^t C_h = \OO(\tfrac{1}{t^p})$; step \step{5} uses Lemma \ref{lemma:lp:bounds:2} that $\sum_{t=1}^T \tfrac{1}{t^p}\leq \OO(T^{1-p})$, and Lemma \ref{lemma:bound:Gamma:Pj}(\bfit{b}) that $\tfrac{1}{T}\sum_{t=1}^T \beta^t e^{t+1} \leq \OO(T^{(p-1)/2})$; step \step{6} uses the choice $p=1/3$ and Lemma \ref{lemma:bound:Gamma:Pj}(\bfit{b}).

\end{proof}

\section{Proofs for Section \ref{sect:strong:KL}}\label{app:sect:KL}

\subsection{Proof of Lemma \ref{lemma:subgrad:bound:P}} \label{app:lemma:subgrad:bound:P}

We begin by presenting the following four useful lemmas.

\begin{lemma} \label{lemma:Grad:Theta:IIII:P}

For both {\sf OADMM-EP} and {\sf OADMM-RR}, we have:
\beq
(\d_{\X},\d_{\y},\d_{\z},\d_{\beta}) \in \partial \Theta(\X^{t},\y^{t},\z^{t},\beta^{t}),
\eeq
\noi where $\d_{\X} \triangleq \mathbb{A}^t + \beta^t \AA \trans ( \AA(\X^t) - \y^t )$, $\d_{\y}\triangleq \nabla h_{[\tau/\beta^t]} (\y^t) - \z^t +  \beta^t(\y^t-\AA(\X^t))$, $\d_{\z} \triangleq \AA(\X^t)-\y^t$, $\d_{\beta} \triangleq \tfrac{1}{2}\|\AA(\X^t)-\y^t\|_2^2  + \partial_{\beta} (h_{[\tau/\beta^t]} (\y^t))   - \tfrac{c}{(\beta^t)^2}$, $\mathbb{A}^t \triangleq \partial \iota_{\MM}  (\X^t) + \nabla f(\X^t)  + \AA\trans (\z^t) $.

\begin{proof}

We define the Lyapunov function as: $\Theta(\X,\y,\z,\beta) \triangleq  L(\X,\y,\z,\beta) + c/\beta$. Using this definition, we can promptly derive the conclusion of the lemma.

\end{proof}
\end{lemma}

\begin{lemma} \label{lemma:bound:term:1}
For {\sf OADMM-EP}, we define $\{\d_{\X},\d_{\y},\d_{\z},\d_{\beta}\}$ as in Lemma \ref{lemma:Grad:Theta:IIII:P}. There exists a constant $K$ such that:
\begin{align}
\ts  \tfrac{1}{\beta^t} \{\|\d_{\X}\|_{\fro} +  \|\d_{\y}\| +  \|\d_{\z} \|+  |\d_{\beta} | \}\leq \ts  K \{\mathcal{B}_t  +  \mathcal{X}^{t} +  \mathcal{Y}^{t} +  \mathcal{Z}^{t} \}.
\end{align}
Here, $\mathcal{B}_t\triangleq\sqrt{ (\tfrac{1}{\beta^{t-1}} - \tfrac{1}{\beta^{t}})\tfrac{1}{\beta^{t-1}}}$, $\mathcal{X}_t\triangleq\|\X^t - \X^{t-1}\|_{\fro}$, $\mathcal{Y}_t\triangleq\|\y^t - \y^{t-1}\|$, and $\mathcal{Z}_t\triangleq\|\AA(\X^{t}) - \y^t\|$.

\begin{proof}

First, we obtain:
\begin{align}\label{eq:beta:A:A:A}
&~ \tfrac{1}{\beta^t} \| \mathbb{A}^t\|_{\fro}  = \|  \partial I_{\MM}  (\X^t) + \nabla f(\X^t) +  \AA\trans(\z^t)\|_{\fro} \nn\\
\overset{\step{1}}{=}&~ \tfrac{1}{\beta^t} \|  \nabla f(\X^t) - \nabla f(\X^{t-1}) - \theta \ell(\beta^{t-1})   (\X^{t} - \X^{t-1}) \nn\\
&~   + \AA\trans (\z^t-\z^{t-1}) - \beta^{t-1} \AA\trans  (\AA(\X^{t-1}) - \y^{t-1} ))  \|_{\fro} \nn\\
\overset{\step{2}}{=}&~ \tfrac{1}{\beta^t} \|  \nabla f(\X^t) - \nabla f(\X^{t-1}) - \theta \ell(\beta^{t-1})   (\X^{t} - \X^{t-1})\|_{\fro} \nn\\
&~ +\tfrac{1}{\beta^t} \| \sigma \beta^{t-1} \AA\trans ( \AA(\X^t) - \y^t ) - \beta^{t-1} \AA\trans  (\AA(\X^{t-1}) - \y^{t-1} ))  \|_{\fro} \nn\\
\overset{\step{3}}{\leq}&~  \tfrac{1}{\beta^t} ( L_f + \theta \ell(\beta^{t-1})  ) \|  \X^t- \X^{t-1} \|_{\fro}  \nn\\
&~ + \beta^{t-1} \| (\sigma-1)  \AA \trans ( \AA(\X^t) - \y^t  )   +    \AA \trans ( \AA(\X^{t-1}-\X^{t}) )  + \AA \trans (\y^{t-1}-\y^{t})  \| \nn\\
\overset{}{=}&~  \OO(\|\X^t - \X^{t-1}\|_{\fro})   +   \OO(\|\y^t - \y^{t-1}\|)   +  \OO(\|\AA(\X^{t}) - \y^t\|),
\end{align}
\noi where step \step{1} uses the optimality of $\X^{t+1}$ for {\sf OADMM-EP} that:
\begin{align}
&~\ts \partial I_{\MM}(\X^{t+1}) \nn\\
\ni&~ \ts - \theta \ell(\beta^t)  (\X^{t+1} - \X^t)     - \nabla_{\X} \mathcal{S}(\X^t,\y^t,\z^t,\beta^t)  \nn \\
=&~- \theta \ell(\beta^t)  (\X^{t+1} - \X^t) - \nabla f(\X^t) - \AA \trans [ \z^t + \beta^t ( \AA(\X^t) - \y^t   ) ];
\end{align}
\noi step \step{2} uses the triangle inequality, the $L_f$-Lipschitz continuity of $\nabla f(\X)$ for all $\X$; the $L_g$-Lipschitz continuity of $\nabla g(\X)$, and the upper bound of $\|\AA(\X_{\cc}^t) - \y^t\|$ as shown in Lemma \ref{lemma:simple}(\bfit{c}); step \step{3} uses the upper bound of $\|  \X^t- \X^{t-1}_{\cc} \|_{\fro}$, and $\z^{t} -\z^{t-1} = \sigma \beta^{t-1} (\AA (\X^{t}) - \y^{t}) $.

\noi \textbf{Part (a)}. We bound the term $\tfrac{1}{\beta^t}\|\d_{\X}\|_{\fro}$. We have:
\begin{align}
\tfrac{1}{\beta^t}\|\d_{\X}\|_{\fro} \overset{\step{1}}{=}&~ \ts \tfrac{1}{\beta^t}\| \mathbb{A}^t + \beta^t \AA\trans( \AA(\X^t) - \y^t)   \|_{\fro}\nn\\
\overset{\step{3}}{\leq}&~ \ts  \OO(\|\X^t - \X^{t-1}\|_{\fro})   +\OO(\|\y^t - \y^{t-1}\|)   +  \OO(\|\AA(\X^{t}) - \y^t\|), \nn
\end{align}
\noi where step \step{1} uses the definition of $\d_{\X}$ in Lemma \ref{lemma:Grad:Theta:IIII:P}; step \step{2} uses the triangle inequality, $\beta^{t-1}\leq \beta^t$, and $\ell(\beta^t)\leq \beta^t \ellup$; step \step{3} uses Inequality (\ref{eq:beta:A:A:A}).

\noi \textbf{Part (b)}. We bound the term $\tfrac{1}{\beta^t}\|\d_{\y}\|$. We have:
\beq
\ts \tfrac{1}{\beta^t}\|\d_\y\|&\overset{\step{1}}{=}& \ts \tfrac{1}{\beta^t} \|\nabla h_{\tau/\beta^t} (\y^t) - \z^t +  \beta^t (\y^t-\AA(\X^t)) \|\nn\\
&\overset{\step{2}}{=}&\ts \tfrac{1}{\beta^t} \|\nabla h_{\tau/\beta^t} (\y^t) - \nabla h_{\tau/\beta^{t-1}} (\y^t) +  \beta^t (\y^t-\AA(\X^t)) \|\nn\\
&\overset{\step{3}}{\leq}&\ts \OO(\|\AA(\X^t) - \y^t\|) + C_h\tfrac{1}{\beta^t} (\tfrac{\beta^t}{\beta^{t-1}}-1)\nn\\
&\overset{}{=}& \ts \OO(\|\AA(\X^t) - \y^t\|) + C_h \sqrt{ \tfrac{1}{\beta^{t-1}} - \tfrac{1}{\beta^{t}} }  \sqrt{\tfrac{1}{\beta^{t-1}}}  \cdot \sqrt{ \tfrac{ \beta^{t-1} }{ \beta^{t-1} } - \tfrac{ \beta^{t-1} }{ \beta^t } }\nn\\
&\overset{\step{4}}{\leq}& \ts \OO(\|\AA(\X^t) - \y^t\|) + C_h \underbrace{\sqrt{ \tfrac{1}{\beta^{t-1}} - \tfrac{1}{\beta^{t}} }  \sqrt{\tfrac{1}{\beta^{t-1}}}}_{\triangleq \mathcal{B}_t}  \cdot \sqrt{1-0} ,
\eeq
\noi where step \step{1} uses the definition of $\d_{\y}$ in Lemma \ref{lemma:Grad:Theta:IIII:P}; step \step{2} uses the fact that $\z^{t} - \tfrac{1}{\sigma} (\z^{t} - \z^{t+1}) =  \nabla h_{\mu^{t}} (\y^{t+1})$, as shown in Lemma \ref{lemma:bounding:dual}; step \step{3} uses $\tfrac{1}{\beta^t}(\z^{t+1} - \z^t) =  \sigma (\AA (\X^{t+1}) - \y^{t+1})$, and $\beta^{t-1}=\OO(\beta^t)$; step \step{4} uses $\mathcal{B}_t\triangleq\sqrt{ (\tfrac{1}{\beta^{t-1}} - \tfrac{1}{\beta^{t}})\tfrac{1}{\beta^{t-1}}}$.

\noi \textbf{Part (d)}. We bound the term $\tfrac{1}{\beta^t}|\d_{\beta}|$. We have:
\begin{align}
\tfrac{1}{\beta^t}|\d_{\beta}| \overset{\step{1}}{=}&~ \tfrac{1}{\beta^t}\cdot \{\tfrac{1}{2}\|\AA(\X^t)-\y^t\|_2^2  + \partial_{\beta} (h_{[\tau/\beta^t]} (\y^t))   - \tfrac{c}{(\beta^t)^2} \} \nn\\
\overset{}{=}&~ \ts \tfrac{1}{\beta^t}\{\OO(\|\AA(\X^t)-\y^t\|)  + \partial_{[\tau/\beta^t]} (h_{[\tau/\beta^t]} (\y^t))\cdot \tfrac{\tau}{(\beta^t)^2}   - \tfrac{c}{(\beta^t)^2} \ts \} \nn\\
\overset{\step{2}}{\leq}&~ \ts \OO(\|\AA(\X^t)-\y^t\|)  +  (\tau C_h^2 +c)\tfrac{1}{(\beta^t)^3} \ts \nn\\
\overset{\step{3}}{\leq}&~ \ts \OO(\|\AA(\X^t)-\y^t\|)  +  (\tau C_h^2 +c) \tfrac{2}{ (\xi\beta^0)^2}\cdot\BB^t,  \ts \nn
\end{align}
\noi where step \step{1} uses the definition of $\d_{\beta}$ in Lemma \ref{lemma:Grad:Theta:IIII:P}; step \step{2} uses Lemma \ref{lemma:mu:continous} that the function $h_{\tau/\beta^t}(\y)$ is $C_h^2$-Lipschitz continuous \textit{w.r.t.} $\mu^t\triangleq \tau/\beta^t$; step \step{3} uses Lemma \ref{lemma:bound:4}.

\noi \textbf{Part (e)}. We bound the term $\tfrac{1}{\beta^t}\|\d_{\z}\|_{\fro}$. We have: $\tfrac{1}{\beta^t}\|\d_{\z}\|\leq \tfrac{1}{\beta^0} \| \AA(\X^t)-\y^t \|$.

\noi \textbf{Part (f)}. Combining the upper bounds for the terms $\{\tfrac{1}{\beta^t}\|\d_{\X}\|_{\fro},\tfrac{1}{\beta^t}\|\d_{\y}\|,\tfrac{1}{\beta^t}|\d_{\beta}|,\tfrac{1}{\beta^t}\|\d_{\z}\|\}$, we finish the proof of this lemma.
\end{proof}

\end{lemma}

\begin{lemma} \label{lemma:bound:term:2}
For {\sf OADMM-RR}, we define $\{\d_{\X},\d_{\y},\d_{\z},\d_{\beta}\}$ as in Lemma \ref{lemma:Grad:Theta:IIII:P}. There exists a constant $K$ such that :
\begin{align}
\ts  \tfrac{1}{\beta^t} \{ \|\d_{\X}\|_{\fro}  +\|\d_{\y}\|  +\|\d_{\z}\| + |\d_{\beta}| \}\leq  \ts  K \{\mathcal{B}_t  +  \mathcal{X}^{t} +  \mathcal{Y}^{t} +  \mathcal{Z}^{t} \},\nn
\end{align}
Here, $\mathcal{B}_t\triangleq\sqrt{ (\tfrac{1}{\beta^t} - \tfrac{1}{\beta^{t+1}})\tfrac{1}{\beta^t}}$, $\mathcal{X}_t\triangleq \|\tfrac{1}{\beta^t}\GGG_{1/2}\|_{\fro}$, $\mathcal{Y}_t\triangleq\|\y^t - \y^{t-1}\|$, and $\mathcal{Z}_t\triangleq\|\AA(\X^{t}) - \y^t\|$.

\begin{proof}

We define $\mathbf{G}^t\triangleq \nabla f(\X^t) - \nabla g(\X^t)  + A\trans (\z^t + \beta^t (\AA(\X^t) - \y^t))$.

We define $\dot{\mathcal{L}}(\X)\triangleq L(\X,\y^t,\z^t,\beta^t)$, we have: $\nabla \dot{\mathcal{L}}(\X^t) = \mathbf{G}^t$.


\noi \textbf{Part (a)}. We bound the term $\tfrac{1}{\beta^t}\|\d_{\X}\|_{\fro}$. We have:
\beq \label{eq:bound:RR:G:partial:I}
\tfrac{1}{\beta^t}\|\d_{\X}\|_{\fro} &\overset{}{=}& \tfrac{1}{\beta^t}\| \partial I_{\MM}  (\X^t) + \nabla \dot{\mathcal{L}}(\X^t) \|_{\fro} \nn\\ &\overset{\step{1}}{\leq}& \ts \tfrac{1}{\beta^t} \| \nabla \dot{\mathcal{L}}(\X^t) - \X^t [\nabla \dot{\mathcal{L}}(\X^t)]\trans \X^t\|_{\fro}\nn\\
&\overset{\step{2}}{=}&\ts \tfrac{1}{\beta^t} \| \mathbf{G}^t - \X^t [\mathbf{G}^t]\trans \X^t\|_{\fro} = \tfrac{1}{\beta^t}\|\GGG^t_{1}\|_{\fro} \nn\\
&\overset{\step{3}}{\leq}& \ts \tfrac{1}{\beta^t} \max(1,1/\rho) \cdot \| \GGG_{1/2}\|_{\fro} = \OO(\mathcal{X}_t),
\eeq
\noi where step \step{1} uses Lemma \ref{lemma:subgradient:R:bound}; step \step{2} uses the definitions of $\{\mathbf{G}^t,\mathbf{D}_{\rho}^t\}$ as in Algorithm \ref{alg:main}; step \step{3} uses $\|\GGG_{1}\|_{\fro} \leq \max(1,1/\rho) \|\GGG_{\rho}\|_{\fro}$, as shown in Lemma \ref{lemma:GD:bound}(\bfit{b}).

\noi \textbf{Part (b)}. We bound the terms $\tfrac{1}{\beta^t}\|\d_{\y}\|$, $\tfrac{1}{\beta^t}|\d_{\beta}|$, and $\tfrac{1}{\beta^t}\|\d_{\z}\|_{\fro}$. Considering that the same strategies for updating $\{\y^t,\beta^t,\z^t\}$ are employed, their bounds in {\sf OADMM-RR} are identical to those in {\sf OADMM-ER}.

\noi \textbf{Part (c)}. Combining the upper bounds for the terms $\{\tfrac{1}{\beta^t}\|\d_{\X}\|_{\fro},\tfrac{1}{\beta^t}\|\d_{\y}\|,\tfrac{1}{\beta^t}|\d_{\beta}|,\tfrac{1}{\beta^t}\|\d_{\z}\|\}$, we finish the proof of this lemma.

\end{proof}

\end{lemma}

Now, we proceed to prove the main result of this lemma.

\begin{lemma} \label{lemma:subg:bound}

(\rm{Subgradient Bounds}) For both {\sf OADMM-EP} and {\sf OADMM-RR}, there exists a constant $K>0$ such that: $$\dist(\zero,\partial \Theta(\ww^t)) \leq \beta^t K e^{t}.$$
\noi Here, $\dist^2(\zero,\partial \Theta(\ww))\triangleq  \dist^2(\zero,\partial_{\beta}\Theta(\ww))+\dist^2(\zero,\partial_{\X}\Theta(\ww)) + \dist^2(\zero,\partial_{\y}\Theta(\ww))+ \dist^2(\zero,\partial_{\z}\Theta(\ww))$.

\begin{proof}

For both {\sf OADMM-EP} and {\sf OADMM-RR}, we have:
\begin{align}
\dist(\zero,\partial \Theta(\ww^t)) = &~ \sqrt{ \|\d_{\X}\|_{\fro}^2  + \|\d_{\y}\|^2 + \|\d_{\z}\|^2 + |\d_{\beta}|^2 }\nn\\
\overset{\step{1}}{\leq}&~  \|\d_{\X}\|_{\fro}  + \|\d_{\y}\| + |\d_{\beta}|+ \|\d_{\z}\| \nn \\
\overset{\step{2}}{\leq}&~  K \beta^t \{ \XX^t  +   \YY^{t}+   \BB^{t}+   \ZZ^{t} \}    , \nn
\end{align}
\noi where step \step{1} uses $\sqrt{a+b}\leq \sqrt{a}+\sqrt{b}$ for all $a,b\geq 0$; step \step{2} uses Lemmas \ref{lemma:bound:term:1} and \ref{lemma:bound:term:2}.

\end{proof}

\end{lemma}

\subsection{Proof of Theorem \ref{theorem:finite:length:P}} \label{app:theorem:finite:length:P}

\begin{proof}

We define $K'\triangleq 4K/\min(\varepsilon_x,\varepsilon_y,\varepsilon_z,\varepsilon_{\beta})$.

Firstly, using Assumption \ref{assumption:KL}, we have:
\beq \label{eq:KL:ours}
\varphi'( \Theta(\ww^t) - \Theta(\ww^{\infty})  )\cdot \dist(\zero,\partial \Theta(\ww^t) )\geq1 .
\eeq
\noi Secondly, given the desingularization function $\varphi(\cdot)$ is concave, for any $a,b \in \Rn$, we have: $\varphi(b) + (a-b) \varphi'(a)\leq \varphi(a)$. Applying the inequality above with $a= \Theta(\ww^t) - \Theta(\ww^{\infty})$ and $b = \Theta(\ww^{t+1}) - \Theta(\ww^{\infty})$, we have:
\beq \label{eq:concave:bound}
&& \ts ( \Theta(\ww^t) - \Theta(\ww^{t+1}) ) \cdot \varphi'( \Theta(\ww^t) - \Theta(\ww^{\infty}))   \nn\\
&\leq& \ts \underbrace{\varphi( \Theta(\ww^{t}) -  \Theta(\ww^{\infty}))}_{\triangleq\varphi^t} - \underbrace{\varphi( \Theta(\ww^{t+1}) -  \Theta(\ww^{\infty})  )}_{\triangleq\varphi^{t+1}}.
\eeq

\noi Third, we define $\mathcal{X}_t\triangleq\|\X^t - \X^{t-1}\|_{\fro}$, and derive the following inequalities for {\sf OADMM-EP}:
\beq \label{eq:beta:XYZ:phi}
&& \min(\varepsilon_z,\varepsilon_y, \varepsilon_x,\varepsilon_{\beta}) \beta^t \{ \BB^2_{t+1}  + \ZZ^2_{t+1} + \YY^2_{t+1} +\XX^2_{t+1} \} \nn\\
&\overset{\step{1}}{\leq}&    \Theta^t - \Theta^{t+1}      =  \Theta(\ww^t) - \Theta(\ww^{t+1})    \nn\\
&\overset{\step{2}}{\leq}&   \ts (\varphi^t - \varphi^{t+1}) \cdot \tfrac{ 1 }{ \varphi'( \Theta(\ww^t) - \Theta(\ww^{\infty}) ) )  }    \nn \\
&\overset{\step{3}}{\leq}&  \ts  (\varphi^t - \varphi^{t+1}) \cdot
\dist(\zero,\partial \Theta(\ww^t)  )\nn\\
&\overset{\step{4}}{\leq}&  \ts   (\varphi^t - \varphi^{t+1}) \cdot K \beta^{t}  (\BB_{t}  + \ZZ_{t} + \YY_{t} +\XX_{t}) ,
\eeq
\noi where step \step{1} uses Lemma \ref{lemma:bound:Gamma:Pj}; step \step{3} uses Inequality (\ref{eq:concave:bound}); step \step{4} uses Inequality (\ref{eq:KL:ours}); step \step{5} uses Lemma \ref{lemma:subgrad:bound:P}. We further derive the following inequalities:
\beq\label{eq:finite:length:resursive:1}
&& \ts   (\BB_{t+1}  + \ZZ_{t+1} + \YY_{t+1} +\XX_{t+1})^2  \nn\\
&\overset{\step{1}}{\leq} & \ts 4 \cdot \{ \BB_{t+1}^2 + \ZZ_{t+1}^2+ \YY_{t+1}^2+\XX_{t+1}^2\}\nn\\
&\overset{\step{2}}{\leq} & \ts \{ 4 K/\min(\varepsilon_z,\varepsilon_y, \varepsilon_x,\varepsilon_{\beta}) \} \cdot  (\varphi^t - \varphi^{t+1}) \cdot (\BB_{t}  + \ZZ_{t} + \YY_{t} +\XX_{t}),
\eeq
\noi where step \step{1} uses the norm inequality that $(a+b+c+d)^2 \leq 4(a^2+b^2+c^2+d^2)$ for any $a,b,c,d\in\Rn$; step \step{2} uses Inequality (\ref{eq:beta:XYZ:phi}).

\noi Fourth, we define $\XX_{t}\triangleq \|\tfrac{1}{\beta}\GGG^{t}_{1/2}\|_{\fro}$, and derive the following inequalities for {\sf OADMM-RR}:
\beq \label{eq:beta:XYZ:phi:2}
&& \min(\varepsilon_z,\varepsilon_y, \varepsilon_x, \varepsilon_{\beta}) \beta^t \{  \BB^2_{t+1}  + \ZZ^2_{t+1} + \YY^2_{t+1} +\XX^2_{t+1}   \} \nn\\
&\overset{\step{1}}{\leq}&    \Theta^t - \Theta^{t+1}      =  \Theta(\ww^t) - \Theta(\ww^{t+1})    \nn\\
&\overset{\step{2}}{\leq}&   \ts (\varphi^t - \varphi^{t+1}) \cdot \tfrac{ 1 }{ \varphi'( \Theta(\ww^t) - \Theta(\ww^{\infty}) ) )  }    \nn \\
&\overset{\step{3}}{\leq}&  \ts  (\varphi^t - \varphi^{t+1}) \cdot
\dist(\zero,\partial \Theta(\ww^t)  )\nn\\
&\overset{\step{4}}{\leq}&  \ts   (\varphi^t - \varphi^{t+1}) \cdot K \beta^{t} (\varphi^t - \varphi^{t+1}) \cdot (\BB_{t}  + \ZZ_{t} + \YY_{t} +\XX_{t}) ,
\eeq
\noi where step \step{1} uses Lemma \ref{lemma:bound:Gamma}; step \step{2} uses Inequality (\ref{eq:concave:bound}); step \step{3} uses Inequality (\ref{eq:KL:ours}); step \step{4} uses Lemma \ref{lemma:subgrad:bound:P}. Furthermore, using the similar strategies as in deriving Inequality (\ref{eq:finite:length:resursive:1}), we have:
\beq \label{eq:finite:length:resursive:2}
&& \ts   (\BB_{t+1}  + \ZZ_{t+1} + \YY_{t+1} +\XX_{t+1})^2  \nn\\
&\leq& \ts \{ 4 K/\min(\varepsilon_z,\varepsilon_y, \varepsilon_x,\varepsilon_{\beta}) \} \cdot  (\varphi^t - \varphi^{t+1}) \cdot (\BB_{t}  + \ZZ_{t} + \YY_{t} +\XX_{t}).
\eeq

\textbf{Part (a)}. We define $e^{t}\triangleq \BB_t + \XX_t + \YY_t+ \ZZ_t$. Given Inequalities (\ref{eq:finite:length:resursive:1}) and (\ref{eq:finite:length:resursive:2}), we establish the following unified inequality applicable to both {\sf OADMM-EP} and {\sf OADMM-RR}:
\beq\label{eq:finite:length:resursive:together}
(e^{t+1})^2 \leq e^t \cdot \underbrace{ \{ 4 K/\min(\varepsilon_z,\varepsilon_y, \varepsilon_x, \varepsilon_{\beta}) \} }_{\triangleq K'} \cdot (\varphi^t - \varphi^{t+1}).
\eeq
\noi \textbf{Part (b)}. Considering Inequality (\ref{eq:finite:length:resursive:together}) and applying Lemma \ref{lemma:any:three:seq} with $p^t\triangleq K'\varphi^t$, we have:
\beq
\ts \forall t,\, \sum_{i=t}^{\infty} e^{i+1}  \leq e^{t} + 2 K' \varphi^t.\nn
\eeq
\noi Letting $t=1$, we have: $\sum_{i=1}^{\infty} e^{i+1} \leq  \ts e^1 + 2 K' \varphi^1$.

\end{proof}

\subsection{Proof of Lemma \ref{lemma:rate:pre}} \label{app:lemma:rate:pre}

\begin{proof}

 We define $d^t \triangleq \sum_{i=t}^{\infty} e^{i+1}$.

\textbf{Part (a-i)}. For {\sf OADMM-EP}, we have for all $t\geq 1$: $\|\X^{t} - \X^\infty\|_{\fro} \overset{\step{1}}{\leq}  \ts \sum_{i=t}^{\infty} \|\X^i-\X^{i+1}\|_{\fro} \leq \ts  \sum_{i=t}^{\infty}\{ \sqrt{ (\tfrac{1}{\beta^t}-\tfrac{1}{\beta^{t+1}})\tfrac{1}{\beta^t}  } + \|\X^{i+1} - \X^i\|_{\fro} + \|\y^{i+1} - \y^i\| + \|\AA(\X^{i+1}) - \y^{i+1}\| \} =\ts \sum_{i=t}^{\infty} e^{i+1}  \triangleq d^t$, where step \step{1} use the triangle inequality.

\textbf{Part (a-ii)}. For {\sf OADMM-RR}, we have: $\|\X^{t+1} - \X^t\|_{\fro} \overset{\step{1}}{=} \| \Retr_{\X^{t}}(-\eta^t \GGG^t_{\rho}) - \X^t\|_{\fro} \overset{\step{2}}{\leq}\dot{k} \|\eta^t \GGG^t_{\rho} \|_{\fro}\overset{\step{3}}{\leq} \dot{k} \eta^t \max(2\rho,1) \| \GGG^t_{1/2} \|_{\fro}    \overset{\step{4}}{=}\dot{k} \max(2\rho,1)  \tfrac{b^t \gamma^j}{\beta^t}  \| \GGG^t_{1/2} \|_{\fro} \overset{\step{5}}{\leq}\dot{k} \max(2\rho,1) \overline{b} \overline{\gamma} \cdot \|\tfrac{1}{\beta^t} \GGG^t_{1/2} \|_{\fro}  = \OO( \| \tfrac{1}{\beta^t}\GGG^t_{1/2} \|_{\fro} )$, where step \step{1} uses the update rule of $\X^{t+1}$; step \step{2} uses Lemma \ref{lemma:M12}; step \step{3} uses Lemma \ref{lemma:GD:bound}(\bfit{c}); step \step{4} uses the definition of $\eta^t\triangleq \tfrac{b^t \gamma^{j}}{\beta^t}$; step \step{5} uses $b^t\leq \overline{b}$, and the fact that $\gamma^j \leq \overline{\gamma}$. Furthermore, we derive for all $t\geq 1$: $\|\X^{t} - \X^\infty\|_{\fro} \leq  \ts \sum_{i=t}^{\infty} \|\X^i-\X^{i+1}\|_{\fro} \leq  \ts \OO(\sum_{i=t}^{\infty}  \|\tfrac{1 }{\beta^t} \GGG_{1/2}^i\|_{\fro}) \overset{}{\leq}  \OO(\sum_{i=t}^{\infty} e^{i+1})=\OO(d^t)$.

\textbf{Part (b)}. We define $\varphi^t \triangleq \varphi(s^t)$, where $s^t\triangleq \Theta(\ww^{t}) -  \Theta(\ww^{\infty})$. We derive:
\beq
\ts d^{t} &\overset{}{\triangleq} & \ts    \sum_{i=t}^{\infty} e^{i+1}\nn\\
&\overset{\step{1}}{\leq} & \ts e^{t} + 2  K' \varphi^t \nn\\
&\overset{\step{2}}{=} & \ts  e^{t}  + 2  K' \tilde{c}\cdot [ (s^t)^{\tilde{\sigma}} ]^{\frac{1-\tilde{\sigma}}{\tilde{\sigma}}} \nn\\
&\overset{\step{3}}{=} & \ts  e^{t}  + 2  K' \tilde{c}\cdot [ \tilde{c} (1-\tilde{\sigma})\cdot \frac{1}{\varphi'(s^t)} ]^{\frac{1-\tilde{\sigma}}{\tilde{\sigma}}} \nn\\
&\overset{\step{4}}{\leq} & \ts  e^{t} + 2  K'  \tilde{c}\cdot [    \tilde{c} (1-\tilde{\sigma})\cdot  \dist(\zero,\partial \Theta(\ww^{t})) ]^{\frac{1-\tilde{\sigma}}{\tilde{\sigma}}} \nn\\
&\overset{\step{5}}{\leq} & \ts e^{t} + 2  K' \tilde{c}\cdot [   \tilde{c} (1-\tilde{\sigma})\cdot  \beta^t K e^{t} ]^{\frac{1-\tilde{\sigma}}{\tilde{\sigma}}} \nn\\
&\overset{\step{6}}{=} & \ts d^{t-1} - d^{t} +  2  K'  \tilde{c}\cdot \{    \tilde{c} (1-\tilde{\sigma})\cdot  \beta^t K( d^{t-1} - d^{t}) \}^{\frac{1-\tilde{\sigma}}{\tilde{\sigma}}} \nn\\
&\overset{}{=} & \ts d^{t-1} - d^{t} + \underbrace{ 2  K' \tilde{c}\cdot [    \tilde{c} (1-\tilde{\sigma}) K ]^{\frac{1-\tilde{\sigma}}{\tilde{\sigma}}}    }_{\triangleq \nu}\cdot [ \beta^t (d^{t-1} - d^{t}) ]^{\frac{1-\tilde{\sigma}}{\tilde{\sigma}}},\nn
\eeq
\noi where step \step{1} uses $\sum_{i=t}^{\infty} e^{i+1}  \leq e^{t} + e^{t-1} + 4 K' \varphi^t$, as shown in Theorem \ref{theorem:finite:length:P}(\bfit{b}); step \step{2} uses the definitions that $\varphi^t \triangleq \varphi(s^t)$, $s^t\triangleq \Theta(\ww^{t}) -  \Theta(\ww^{\infty})$, and $\varphi(s) = \tilde{c} s^{1-\tilde{\sigma}}$; step \step{3} uses $\varphi'(s) = \tilde{c}(1-\tilde{\sigma}) \cdot [s]^{-\tilde{\sigma}}$, leading to $[s^t]^{\tilde{\sigma}}=\tilde{c}(1-\tilde{\sigma})\cdot \tfrac{1}{\varphi'(s^t)}$; step \step{4} uses Assumption \ref{assumption:KL} that $1 \leq \dist(\zero,\partial \Theta(\ww^t) ) \cdot \varphi'( s^t)$; step \step{5} uses $\dist(\zero,\partial \Theta(\ww^t) )\leq \beta^t K e^t$ for both {\sf OADMM-EP} and {\sf OADMM-RR}, as shown in Lemma \ref{lemma:subgrad:bound:P}; step \step{6} uses the fact that $e^t = d^{t-1} - d^{t}$.

\end{proof}

\subsection{Proof of Theorem \ref{theorem:KL:rate:Exponent:P}} \label{app:theorem:KL:rate:Exponent:P}

\begin{proof}

Using Lemma \ref{lemma:rate:pre}(\bfit{b}), we have:
\beq\label{eq:three:case:conv:ddd}
d^t \leq d^{t-1} - d^{t} + \nu\cdot  [\beta^t (d^{t-1} - d^{t}) ]^{\frac{1-\tilde{\sigma}}{\tilde{\sigma}}}.
\eeq
By the definition of $e^t \triangleq d^{t-1} - d^{t} \triangleq \BB_t + \XX_t + \YY_t+ \ZZ_t$, there exists a universal constant $\overline{\rm{e}}>0$ such that $e^t \leq \overline{\rm{e}}$.

We consider three cases for Inequality (\ref{eq:three:case:conv:ddd}).

\textbf{Part (a)}. $\tilde{\sigma} \in (0,\frac{1}{2}]$. We let $u = \frac{ 1-\tilde{\sigma}}{\tilde{\sigma}} \in [1,\infty)$, and $\zeta= (1-p)u = \tfrac{2}{3}\cdot \frac{1-\tilde{\sigma}}{\tilde{\sigma}}\in [\tfrac{2}{3},\infty]$.

From Inequality (\ref{eq:three:case:conv:ddd}), we obtain:
\beq
d^t& \leq& d^{t-1} - d^{t} + \nu \cdot  [\beta^t (d^{t-1} - d^{t}) ]^{u} \nn\\
&\overset{\step{1}}{ \leq} & d^{t-1} - d^{t} + \nu' \cdot  t^{pu} \cdot [d^{t-1} - d^{t}]^{u} \nn\\
&\overset{\step{2}}{ \leq} &  \OO(t^{-\zeta}), \nn
\eeq
\noi where step \step{1} uses $\beta^t \leq \nu' t^p$, where $\nu'$ is some certain constant; step \step{2} uses Lemma \ref{lemma:sigma:case:less:than:1:4} with $c=\nu'$ and $u>1$.

%

\textbf{Part (b)}. $\tilde{\sigma} \in (\frac{1}{2},1)$. We let $u = \frac{1-\tilde{\sigma}}{\tilde{\sigma}} \in (0,1)$, and $\zeta = \tfrac{1-p}{1/u-1} = \frac{2}{3} \cdot \frac{1-\tilde{\sigma}}{2\tilde{\sigma}-1} \in (0,\infty)$.

We have from Inequality (\ref{eq:three:case:conv:ddd}):
\beq \label{eq:three:case:conv:ddd:BBBBB}
d^t &\leq&  \nu [\beta^t]^{u} \cdot (d^{t-1} - d^{t}) ^{u} + d^{t-1} - d^{t}    \nn\\
&\overset{\step{1}}{\leq} &  \nu [\beta^t]^{u} \cdot (d^{t-1} - d^{t}) ^{u}+  (d^{t-1} - d^{t} )^{u} \cdot \overline{\rm{e}}^{ 1 - u }  \nn\\
&\overset{\step{2}}{\leq} &  \nu [\beta^t]^{u} \cdot (d^{t-1} - d^{t}) ^{u} +  (d^{t-1} - d^{t} )^{u} \cdot \overline{\rm{e}}^{ 1 - u }  \cdot (\tfrac{\beta^t}{\beta^0})^u \nn\\
&\overset{}{=} &  [\nu+ (\beta^0)^{-u}] \cdot [\beta^t]^{u} \cdot (d^{t-1} - d^{t}) ^{u} \nn\\
&\overset{\step{3}}{=} &  \nu' t^{pu} \cdot (d^{t-1} - d^{t}) ^{u}\nn\\
&\overset{\step{4}}{ \leq } &  \OO(t^{-\zeta}),\nn
\eeq
\noi where step \step{1} uses the fact that $\max_{x \in (0,\overline{\rm{e}}]} \frac{x}{x^{u}} \leq \overline{\rm{e}}^{1-u}$ if $u\in(0,1)$; step \step{2} uses $\beta^0\leq\beta^t$ and $u \in (0,1)$; step \step{3} uses the fact that $\beta^t\leq \nu'' t^p$ for some certain constant $\nu''$, and the choice that $\nu'= [\nu+(\beta^0)^{-u}]\nu''$; step \step{4} uses Lemma \ref{lemma:sigma:case:between:1:2:to:1} with $c=\nu'$.

\textbf{Part (c)}. $\tilde{\sigma} \in (\tfrac{1}{4},\tfrac{1}{2}]$. We let $u = \tfrac{ 1-\tilde{\sigma}}{\tilde{\sigma}} \in [1,3)$, and $\zeta = 1-pu = \frac{4 \tilde{\sigma}-1}{3\tilde{\sigma}} \in (0,\tfrac{2}{3}]$. 


We have from Inequality (\ref{eq:three:case:conv:ddd}):
\beq \label{eq:three:case:conv:ddd:AAA}
d^{t} &\leq&   d^{t-1} - d^{t} +   \nu \cdot [\beta^t]^u\cdot [d^{t-1} - d^{t}] ^{u} \cdot   \nn\\
&\overset{\step{1}}{ \leq} & (d^{t-1} - d^{t} ) \cdot  (1+ \overline{\rm{e}}^{u-1} \nu \cdot [\beta^t]^u)   \nn\\
&\overset{\step{2}}{ \leq } &  (d^{t-1} - d^{t} ) \cdot \nu' t^{p u} \nn\\
&\overset{}{ \leq} & d^{t-1}  \cdot \tfrac{ \nu' t^{p u} }{ \nu' t^{p u} + 1}\nn\\
&\overset{\step{3}}{\leq} &  \mathcal{O} ({1}/{\exp(t^\zeta)}), \nn
\eeq
\noi where step \step{1} uses the fact that $\tfrac{x^u}{x} \leq \overline{\rm{e}}^{u-1}$ for all $u \geq 1$, and $x = d^{t-1} - d^{t} = e^t \leq \overline{\rm{e}}$; step \step{2} uses $ (1+ \overline{\rm{e}}^{u-1} \nu \cdot [\beta^t]^u) \leq \nu' t^{p u}$ with $\nu'$ being some certain constant, which is implied by $\beta^t = \OO(t^p)$; step \step{3} uses Lemma \ref{lemma:sigma:case:between:1:4:to:1/2} with $c=\nu'$ and $q=pu$.

\textbf{Part (d)}. Finally, using the fact $\|\X^{T} - \X^\infty\|_{\fro} \leq \OO(d^T)$ as shown in Lemma \ref{app:lemma:rate:pre}(\bfit{b}), we finish the proof of this theorem.

\end{proof}

\section{Additional Experiments Details and Results} \label{app:exp}


\noi $\blacktriangleright$ \textbf{Datasets}. In our experiments, we utilize several datasets comprising both randomly generated and publicly available real-world data. These datasets are structured as data matrices $\mathbf{D}\in\Rn^{\dot{m}\times \dot{d}}$. They are denoted as follows: `mnist-$\dot{m}$-$\dot{d}$', `TDT2-$\dot{m}$-$d'$', `sector-$m'$-$d'$', and `randn-$\dot{m}$-$\dot{d}$', where ${\mathrm{randn}(m,n)}$ generates a standard Gaussian random matrix of size $m\times n$. The construction of $\mathbf{D}\in\Rn^{\dot{m}\times \dot{d}}$ involves randomly selecting $\dot{m}$ examples and $\dot{d}$ dimensions from the original real-world dataset, sourced from \url{http://www.cad.zju.edu.cn/home/dengcai/Data/TextData.html} and \url{https://www.csie.ntu.edu.tw/~cjlin/libsvm/}. Subsequently, we normalize each column of $\mathbf{D}$ to possess a unit norm and center the data by subtracting the mean, denoted as $\mathbf{D} \Leftarrow \mathbf{D}-\one \one\trans \mathbf{D}$.

\noi $\blacktriangleright$ \textbf{Additional experiment Results}.
We present additional experimental results in Figures \ref{fig:L0PCA:rho:10}, \ref{fig:L0PCA:rho:100}, and \ref{fig:L0PCA:rho:1000}. The figures demonstrate that the proposed {\sf OADMM} method generally outperforms the other methods, with {\sf OADMM-EP} surpassing {\sf OADMM-RR}. These results reinforce our previous conclusions.

\begin{figure}[!t]

\centering
\begin{subfigure}{.24\textwidth}\centering\includegraphics[width=1.12\linewidth]{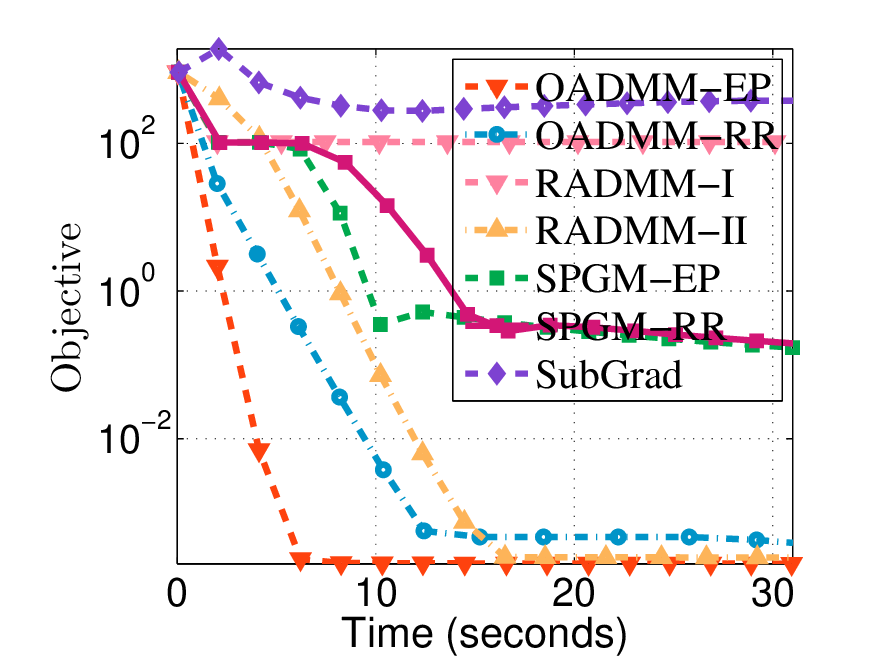}\caption{\scriptsize mnist-1500-500}\label{fig:sub1}\end{subfigure}
\begin{subfigure}{.24\textwidth}\centering\includegraphics[width=1.12\linewidth]{fig//demo_L0PCA10_12.eps}\caption{\scriptsize mnist-2500-500}\label{fig:sub2}\end{subfigure}
\begin{subfigure}{.24\textwidth}\centering\includegraphics[width=1.12\linewidth]{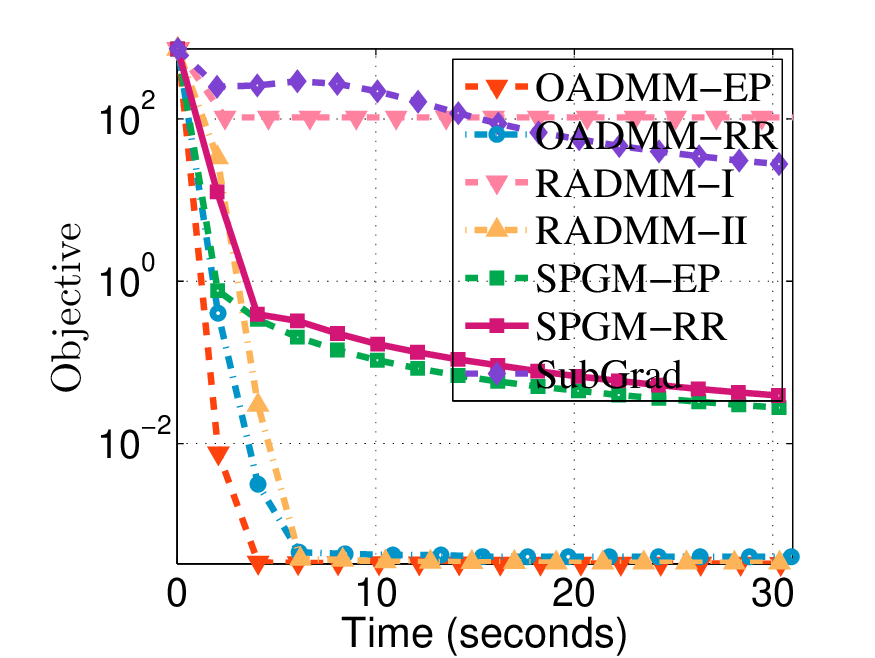}\caption{\scriptsize TDT2-1500-500}\label{fig:sub3}\end{subfigure}
\begin{subfigure}{.24\textwidth}\centering\includegraphics[width=1.12\linewidth]{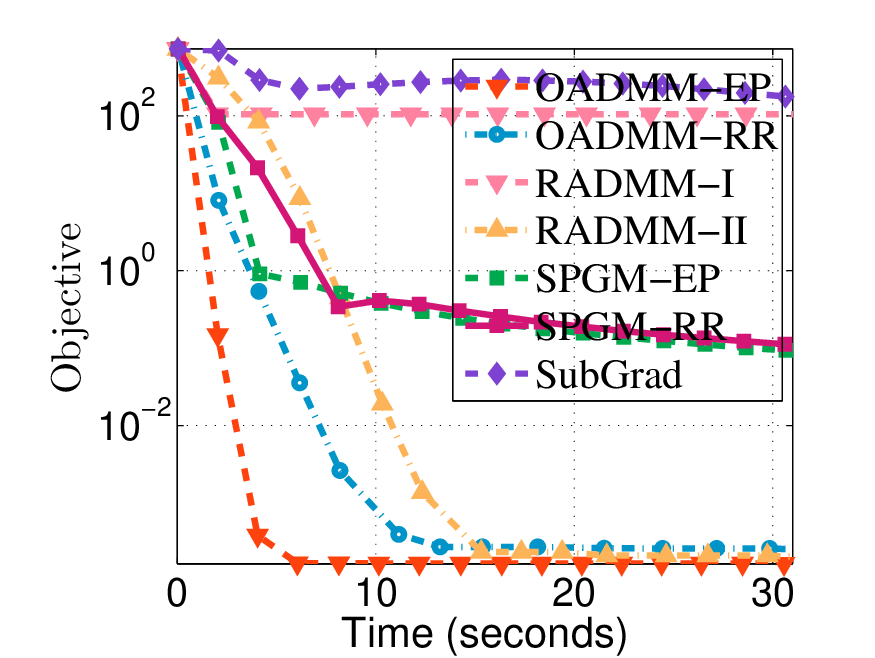}\caption{\scriptsize TDT2-2500-500}\label{fig:sub4}\end{subfigure}

\begin{subfigure}{.24\textwidth}\centering\includegraphics[width=1.12\linewidth]{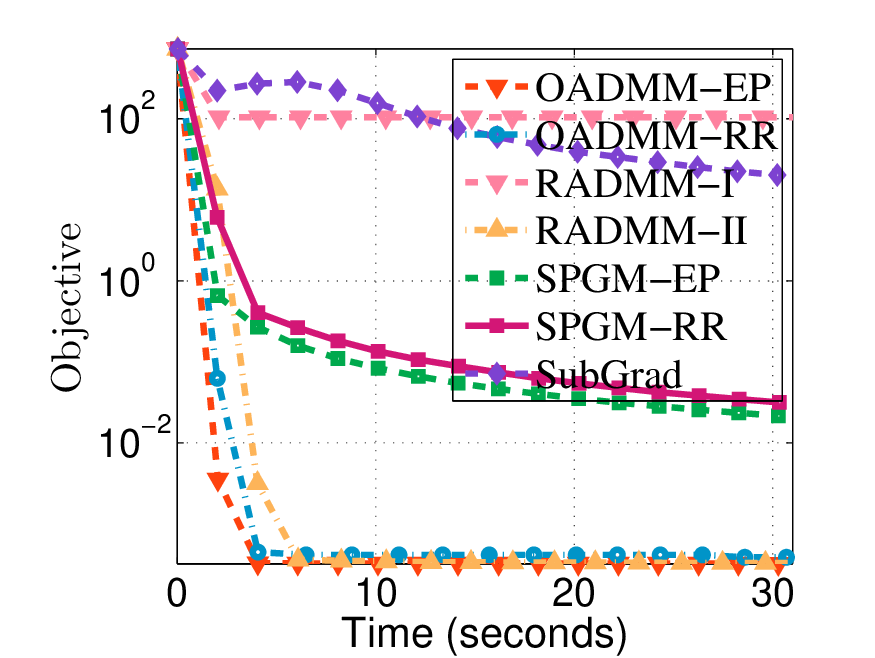}\caption{\scriptsize sector-1500-500}\label{fig:sub1}\end{subfigure}
\begin{subfigure}{.24\textwidth}\centering\includegraphics[width=1.12\linewidth]{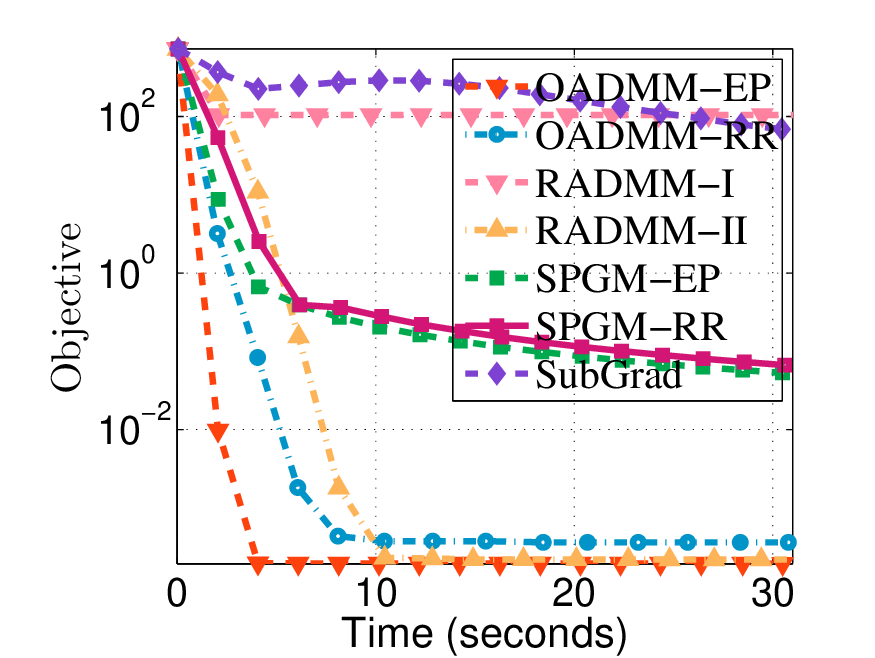}\caption{\scriptsize sector-2500-500}\label{fig:sub2}\end{subfigure}
\begin{subfigure}{.24\textwidth}\centering\includegraphics[width=1.12\linewidth]{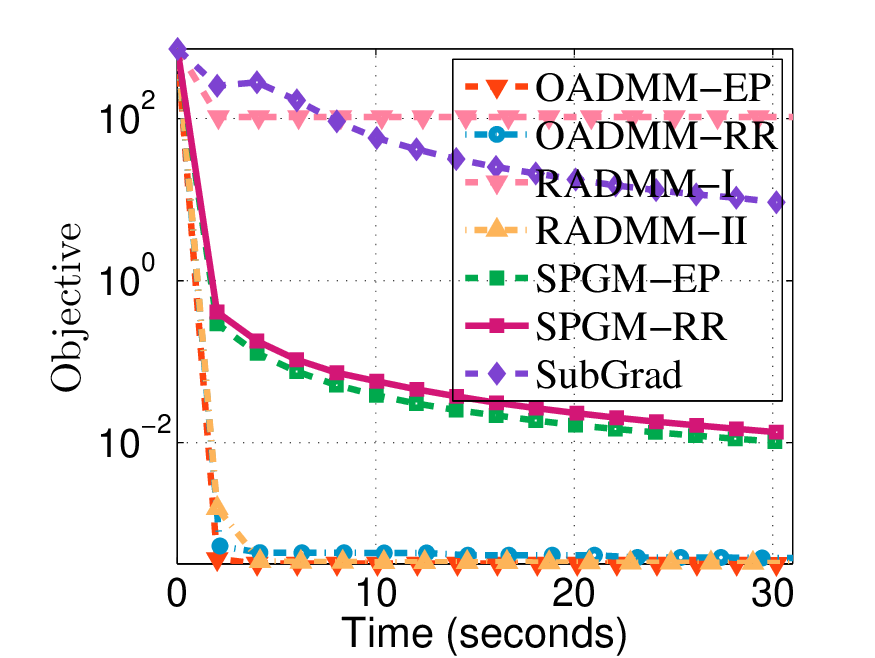}\caption{\scriptsize randn-1500-500}\label{fig:sub3}\end{subfigure}
\begin{subfigure}{.24\textwidth}\centering\includegraphics[width=1.12\linewidth]{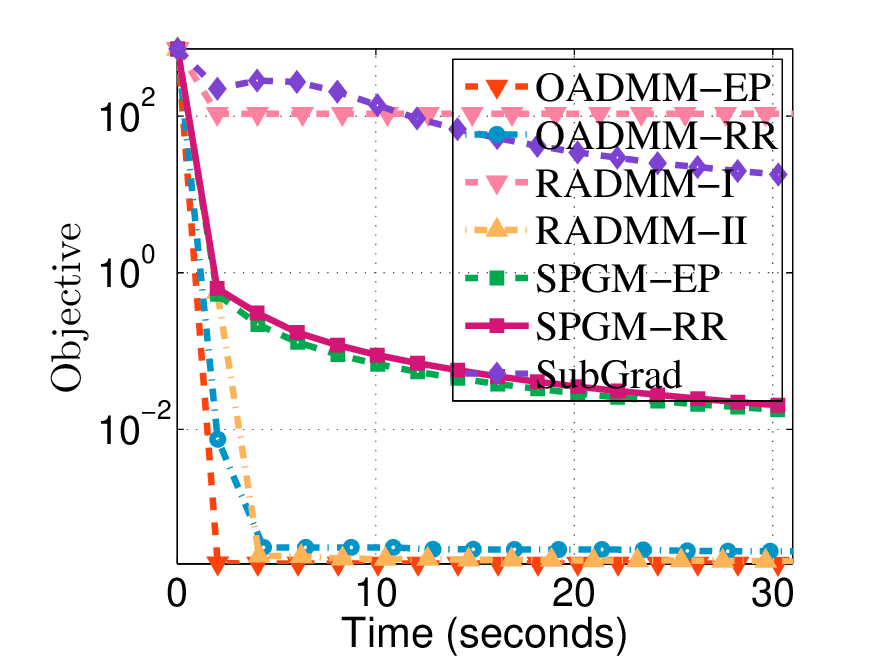}\caption{\scriptsize randn-2500-500}\label{fig:sub4}\end{subfigure}

\caption{The convergence curve of the compared methods with $\dot{\rho}=10$.}

\label{fig:L0PCA:rho:10}

\centering
\centering
\begin{subfigure}{.24\textwidth}\centering\includegraphics[width=1.12\linewidth]{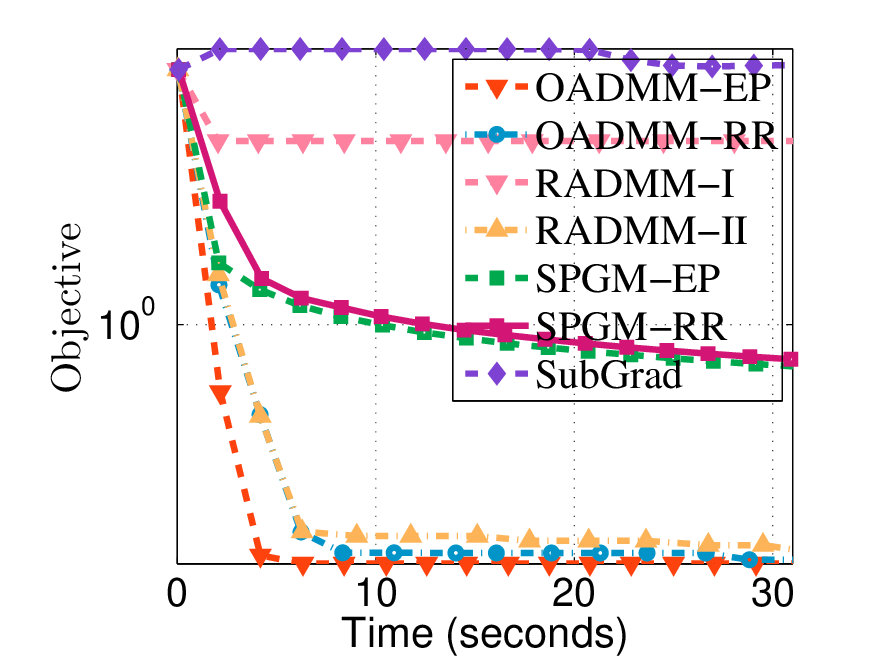}\caption{\scriptsize mnist-1500-500}\label{fig:sub1}\end{subfigure}
\begin{subfigure}{.24\textwidth}\centering\includegraphics[width=1.12\linewidth]{fig//demo_L0PCA100_12.eps}\caption{\scriptsize mnist-2500-500}\label{fig:sub2}\end{subfigure}
\begin{subfigure}{.24\textwidth}\centering\includegraphics[width=1.12\linewidth]{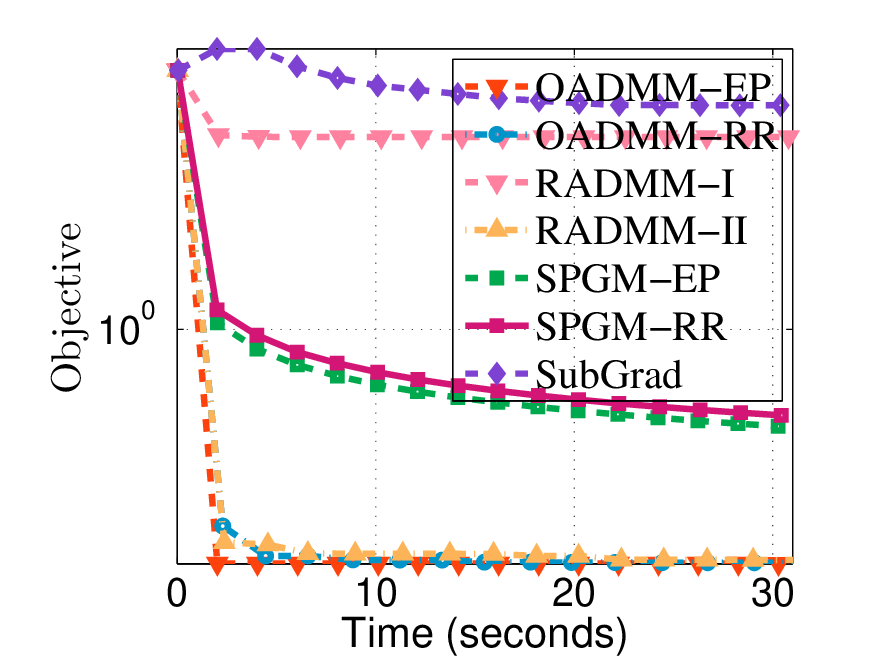}\caption{\scriptsize TDT2-1500-500}\label{fig:sub3}\end{subfigure}
\begin{subfigure}{.24\textwidth}\centering\includegraphics[width=1.12\linewidth]{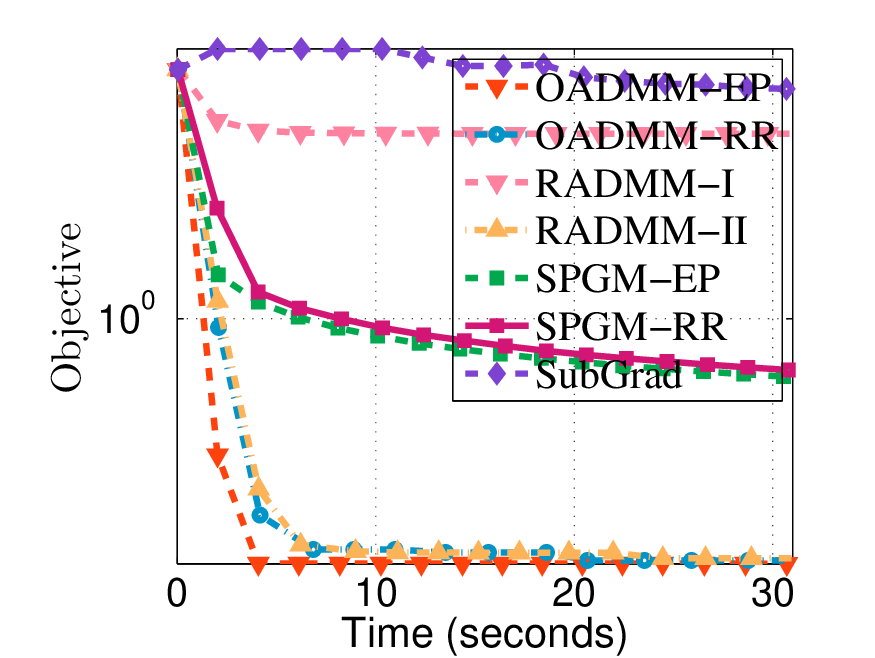}\caption{\scriptsize TDT2-2500-500}\label{fig:sub4}\end{subfigure}

\begin{subfigure}{.24\textwidth}\centering\includegraphics[width=1.12\linewidth]{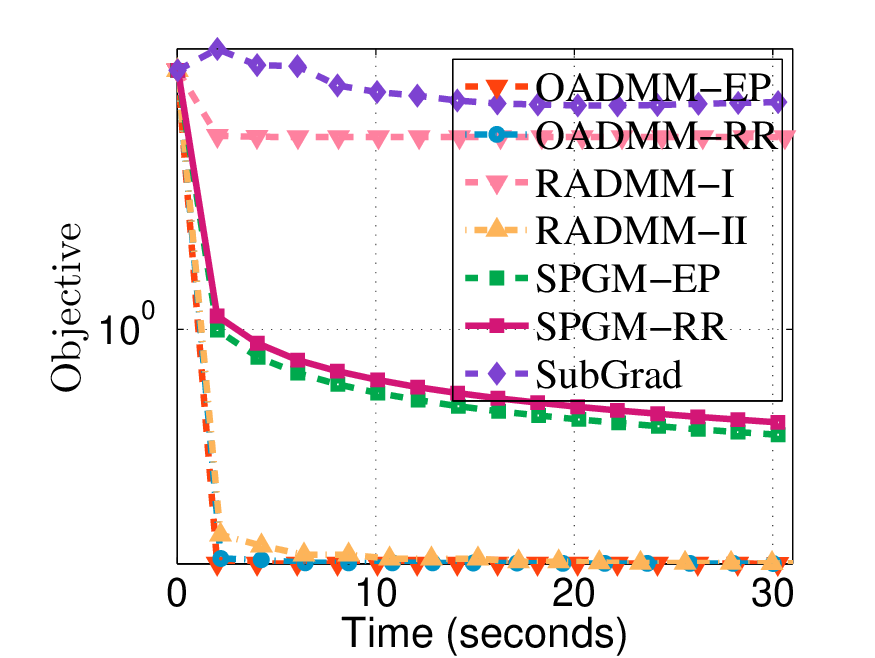}\caption{\scriptsize sector-1500-500}\label{fig:sub1}\end{subfigure}
\begin{subfigure}{.24\textwidth}\centering\includegraphics[width=1.12\linewidth]{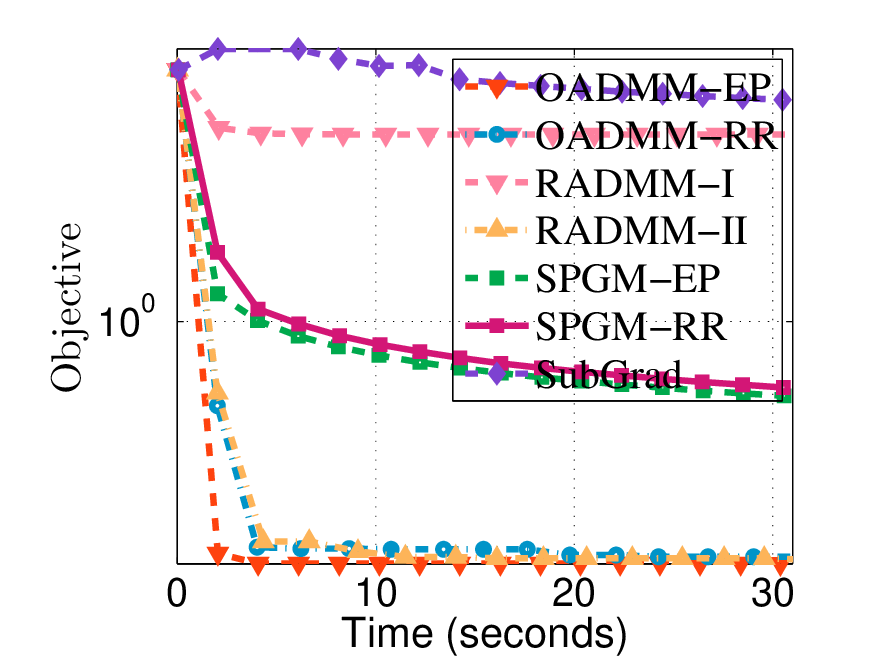}\caption{\scriptsize sector-2500-500}\label{fig:sub2}\end{subfigure}
\begin{subfigure}{.24\textwidth}\centering\includegraphics[width=1.12\linewidth]{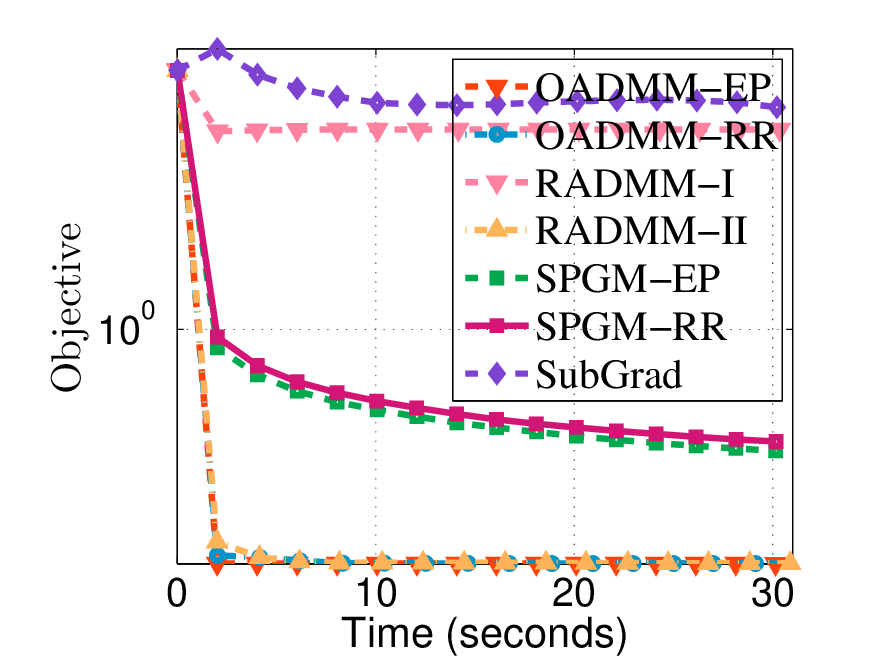}\caption{\scriptsize randn-1500-500}\label{fig:sub3}\end{subfigure}
\begin{subfigure}{.24\textwidth}\centering\includegraphics[width=1.12\linewidth]{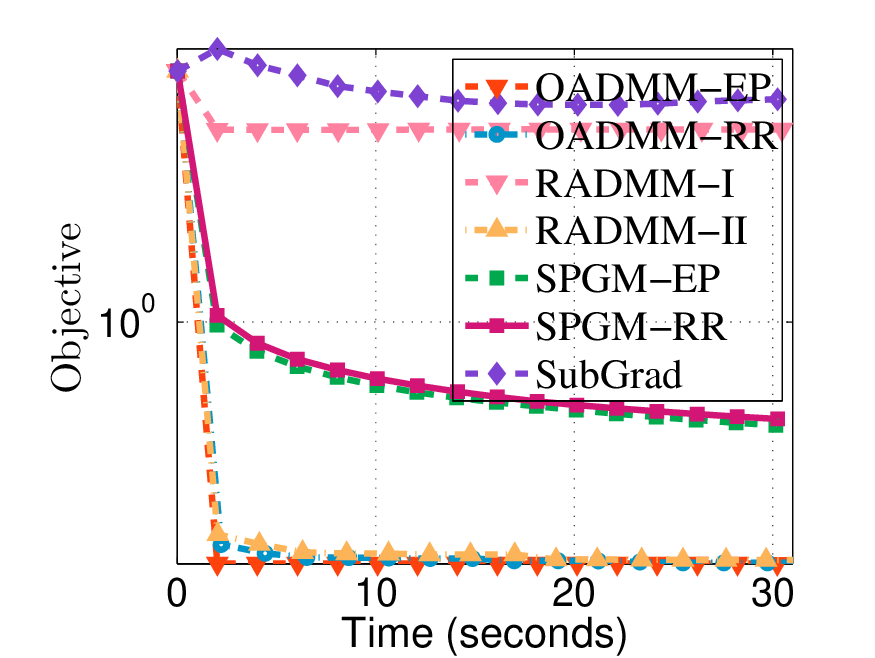}\caption{\scriptsize randn-2500-500}\label{fig:sub4}\end{subfigure}

\caption{The convergence curve of the compared methods with $\dot{\rho}=100$.}

\label{fig:L0PCA:rho:100}

\end{figure}

\begin{figure}[!t]
\centering

\centering
\begin{subfigure}{.24\textwidth}\centering\includegraphics[width=1.12\linewidth]{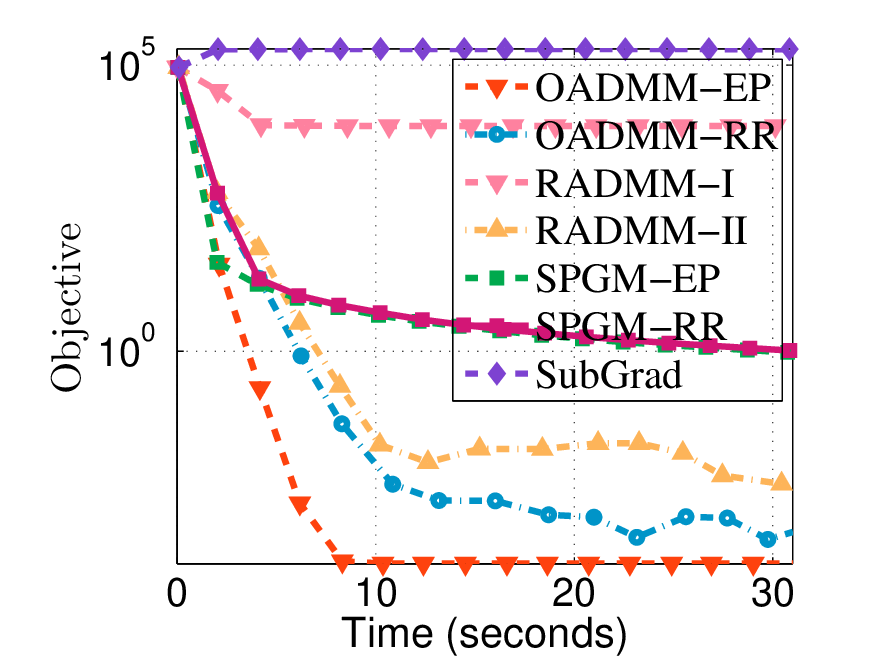}\caption{\scriptsize mnist-1500-500}\label{fig:sub1}\end{subfigure}
\begin{subfigure}{.24\textwidth}\centering\includegraphics[width=1.12\linewidth]{fig//demo_L0PCA1000_12.eps}\caption{\scriptsize mnist-2500-500}\label{fig:sub2}\end{subfigure}
\begin{subfigure}{.24\textwidth}\centering\includegraphics[width=1.12\linewidth]{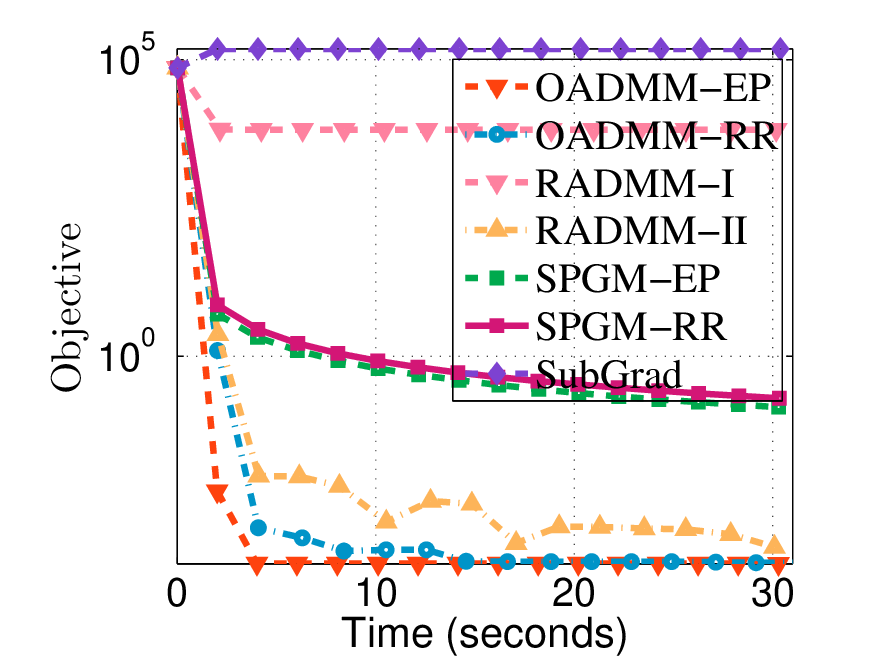}\caption{\scriptsize TDT2-1500-500}\label{fig:sub3}\end{subfigure}
\begin{subfigure}{.24\textwidth}\centering\includegraphics[width=1.12\linewidth]{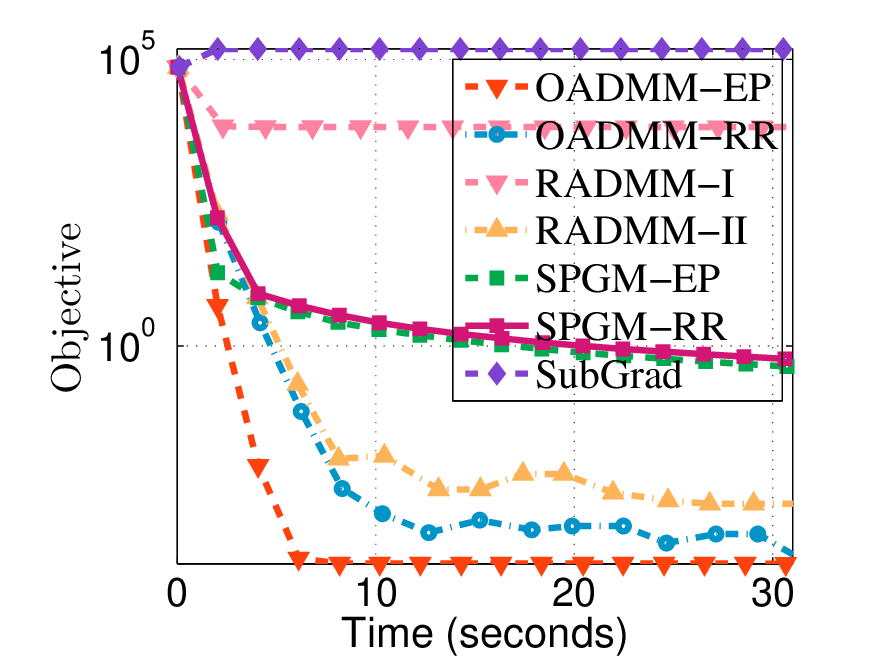}\caption{\scriptsize TDT2-2500-500}\label{fig:sub4}\end{subfigure}

\begin{subfigure}{.24\textwidth}\centering\includegraphics[width=1.12\linewidth]{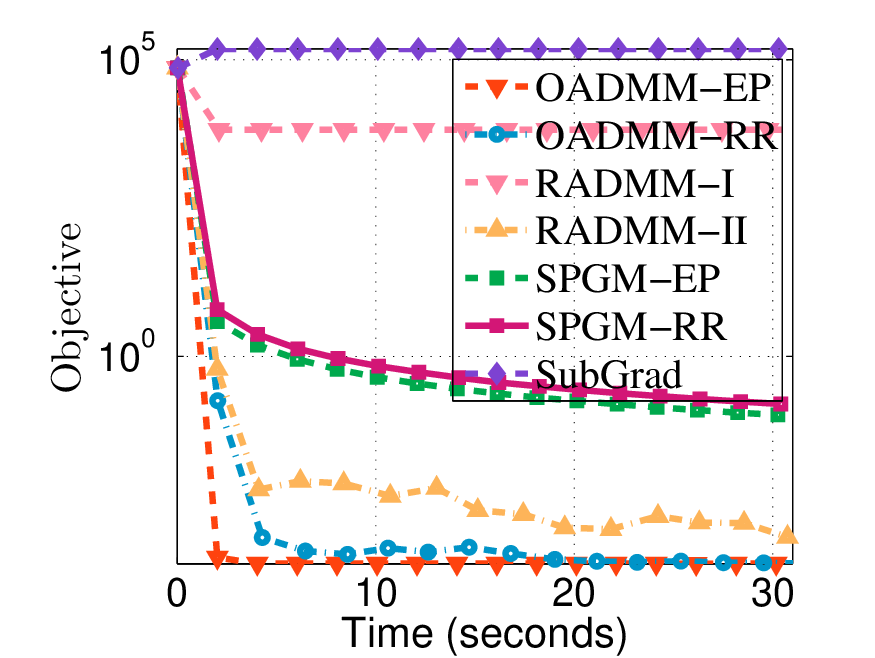}\caption{\scriptsize sector-1500-500}\label{fig:sub1}\end{subfigure}
\begin{subfigure}{.24\textwidth}\centering\includegraphics[width=1.12\linewidth]{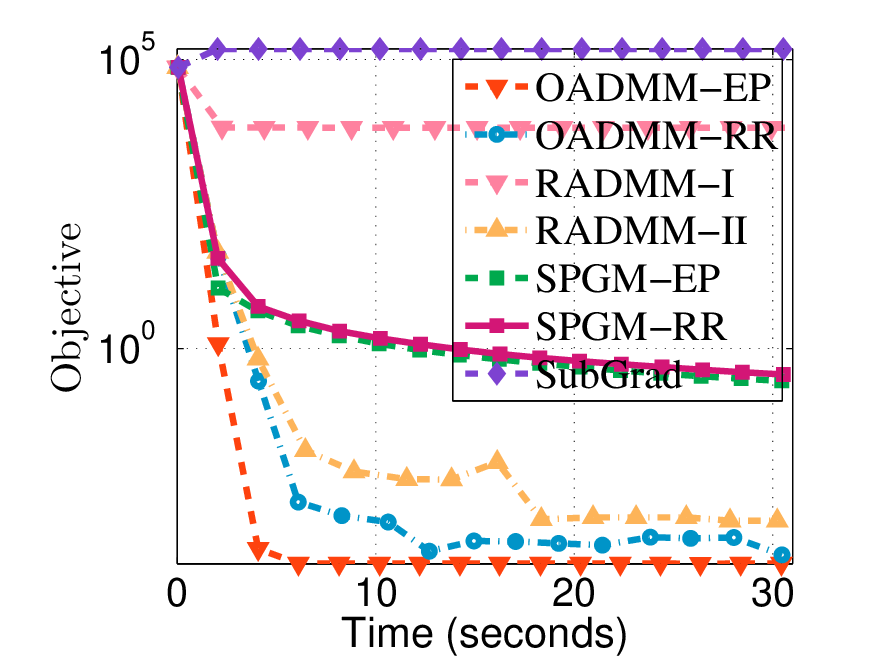}\caption{\scriptsize sector-2500-500}\label{fig:sub2}\end{subfigure}
\begin{subfigure}{.24\textwidth}\centering\includegraphics[width=1.12\linewidth]{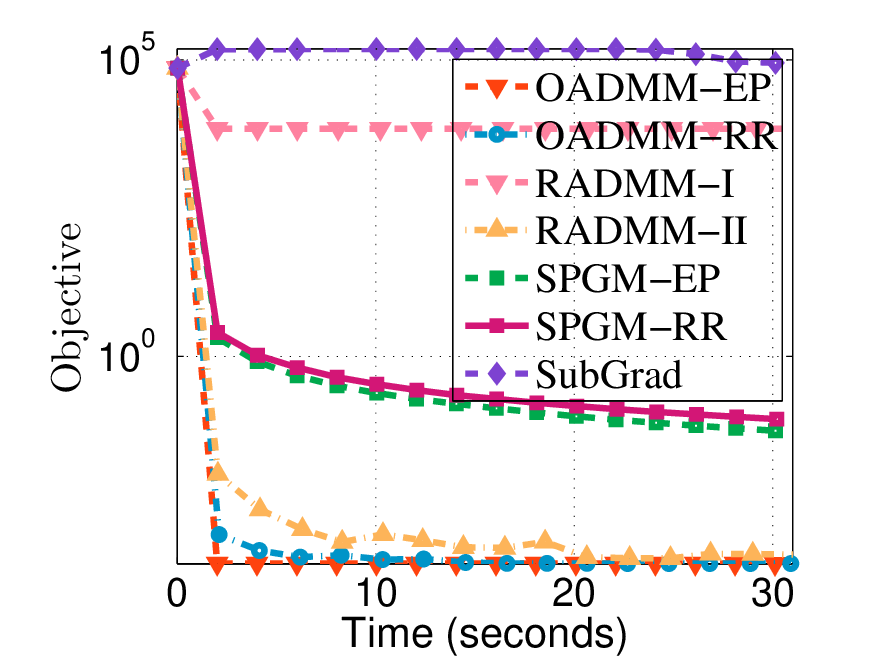}\caption{\scriptsize randn-1500-500}\label{fig:sub3}\end{subfigure}
\begin{subfigure}{.24\textwidth}\centering\includegraphics[width=1.12\linewidth]{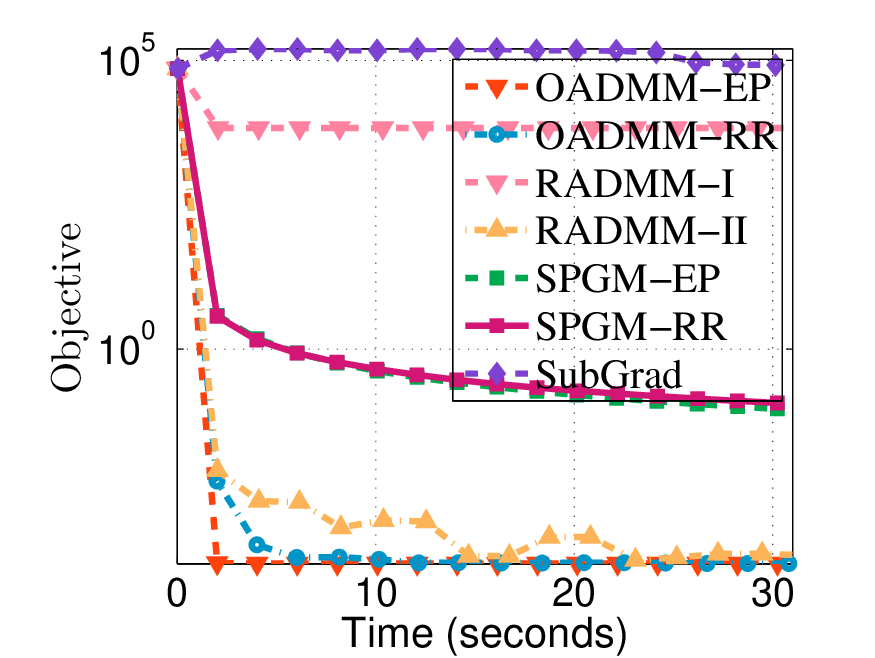}\caption{\scriptsize randn-2500-500}\label{fig:sub4}\end{subfigure}

\caption{The convergence curve of the compared methods with $\dot{\rho}=1000$.}

\label{fig:L0PCA:rho:1000}

\end{figure}

\end{document}